\definecolor{mycyan}{cmyk}{1,.2,0,0} 
\definecolor{mymagenta}{cmyk}{0,1,0,0}
\definecolor{myyellow}{cmyk}{0,0,1,.1}
\newtheorem{theorem}{Theorem}[section]
\newtheorem{corollary}[theorem]{Corollary}
\newtheorem{proposition}[theorem]{Proposition}
\newtheorem{lemma}[theorem]{Lemma}
\newtheorem{definition}[theorem]{Definition}
\newtheorem{conjecture}[theorem]{Conjecture}
\theoremstyle{definition}
\newtheorem{example}[theorem]{Example}
\newtheorem{remark}[theorem]{Remark}
\newtheorem{question}[theorem]{Question}
\newtheorem{setup}[theorem]{Setup}
\newtheorem{claim}[theorem]{Claim}
\renewcommand{\L}{\operatorname{L}} 
\DeclareMathOperator{\Hstar}{H^\ast}
\DeclareMathOperator{\Hstari}{H^\ast_i}
\DeclareMathOperator{\Hstarimo}{H^\ast_{i-1}}
\DeclareMathOperator{\Hstartil}{H}
\newcommand{\hstar}{h^\ast}
\newcommand{\bzero}{\mathbf 0}
\newcommand{\be}{\mathbf e}
\newcommand{\bs}{\mathbf s}
\newcommand{\bu}{\mathbf u}
\newcommand{\bv}{\mathbf v}
\newcommand{\bw}{\mathbf w}
\newcommand{\bx}{\mathbf x}
\newcommand{\by}{\mathbf y}
\newcommand{\bz}{\mathbf z}
\newcommand{\beu}{\mathbf{e}_u}
\newcommand{\bev}{\mathbf{e}_v}
\newcommand{\bei}{\mathbf{e}_i}
\newcommand{\beii}{\mathbf{e}_{i+1}}
\newcommand{\bej}{\mathbf{e}_j}
\newcommand{\besi}{\mathbf{e}_{\sigma_i}}
\newcommand{\besj}{\mathbf{e}_{\sigma_j}}
\newcommand{\chitriv}{\chi_{\text{triv}}}
\newcommand{\chialt}{\chi_{\text{alt}}}
\newcommand{\C}{\mathbb{C}}
\newcommand{\Q}{\mathbb{Q}}
\newcommand{\R}{\mathbb{R}}
\newcommand{\Z}{\mathbb{Z}}
\newcommand{\up}{\operatorname{UP}}
\newcommand{\ehr}{\operatorname{L}}
\newcommand{\Ehr}{\operatorname{Ehr}} 
\newcommand{\EE}{\operatorname{EE}} 
\newcommand{\id}{\mathrm{id}}
\newcommand{\Sym}{\operatorname{Sym}} 
\newcommand{\Aut}{\operatorname{Aut}} 
\newcommand{\inc}{\operatorname{inc}} 
\newcommand{\vol}{\operatorname{vol}} 
\newcommand{\verts}{\operatorname{vert}} 
\DeclareMathOperator{\conv}{conv}     
\DeclareMathOperator{\aff}{aff} 
\newcommand{\val}{\operatorname{val}_2} 
\newcommand{\Cone}{\operatorname{Cone}} 
\newcommand{\trace}{\operatorname{trace}} 
\newcommand{\sym}{\operatorname{Sym}} 
\newcommand{\indeg}{\operatorname{indeg}} 
\newcommand{\Bx}{\operatorname{Box}} 
\newcommand\hobox{ 
    \unitlength .23 mm 
    \begin{picture}(10,10)(0,0)
    \linethickness{0.2mm}
    \put(0,-0.4){\line(0,1){10.83}}
    \linethickness{0.2mm}
    \put(-0.4,0){\line(1,0){10.83}}
    \linethickness{0.2mm}
    \multiput(10,0.11)(0,1.885){6}{\line(0,1){0.9}}
    \linethickness{0.2mm}
    \multiput(0.11,10)(1.885,0){6}{\line(1,0){0.9}}
    \end{picture}
    \,
}
\renewcommand{\path}{\mathsf{Path}}
\newcommand*\concat{\cdot} 
\newcommand*\nconcat{\odot} 
\newcommand*\ao{\mathfrak{o}} 
\title{Techniques in Equivariant Ehrhart Theory}
\author{Sophia Elia, Donghyun Kim, Mariel Supina}
\date{\today}
\begin{document}
\maketitle

\begin{abstract}
Equivariant Ehrhart theory generalizes the study of lattice point enumeration to also account for the symmetries of a polytope under a linear group action.
We present a catalogue of techniques with applications in this field, including zonotopal decompositions, symmetric triangulations, combinatorial interpretation of the $h^\ast$-polynomial, and certificates for the (non)existence of invariant non-degenerate hypersurfaces.
We apply these methods to several families of examples including hypersimplices, orbit polytopes, and graphic zonotopes, expanding the library of polytopes for which their equivariant Ehrhart theory is known.

\end{abstract}

    


\section{Introduction}
    
    
    Ehrhart theory studies the enumeration of lattice points in polytopes via the Ehrhart counting function $\L(P;t):= |tP\cap M'|$ for positive integers $t$ and a lattice $M'$.
    If $P$ is a lattice polytope, then $\L(P;t)$ agrees with a polynomial in $t$ which is known as the \emph{Ehrhart polynomial} \cite{Ehrhart1962}.
    When a lattice polytope $P$ is invariant under the linear action of a group $G$ on $M'$ with representation $\rho:G\to \operatorname{GL}(M')$, then we may study its \emph{equivariant Ehrhart theory}.
    This concept was introduced by Stapledon \cite{Stapledon} with motivation from toric geometry, representation theory, and mirror symmetry.
    The equivariant analogue of the Ehrhart polynomial is the character $\chi_{tP}$, defined as the character of the complex permutation representation on the lattice points in $tP$.
    The \emph{equivariant Ehrhart series} is then given as follows: 
    $$\EE(P;z) = \sum_{t \geq 0} \chi_{tP} z^t = \frac{\Hstar(P;z)}{\det (I - z \cdot \rho)} $$
    Evaluating this series at the identity element of the group returns the usual Ehrhart series of $P$.
    The numerator $\Hstar(P;z)$ is a formal power series with coefficients in $R(G)$, the character ring of $G$.
    It is an open problem to determine for which polytopes and group actions $\Hstar(P;z)$ is a polynomial and effective.
    Stapledon provided many conjectures around this topic, including:
    
    \begin{conjecture}[{\cite[Conjecture 12.1]{Stapledon}}]\label{conj:Stapledon}
    The following are equivalent:
        \begin{enumerate}[(i)]
        \itemsep 0em
            \item \label{item:nondegen} The toric variety of $P$ admits a $G$-invariant nondegenerate hypersurface with Newton polytope $P$. 
            
            \item \label{item:effective} The $\Hstar$-series is effective.
            \item \label{item:polynomial}The $\Hstar$-series is a polynomial.
        \end{enumerate}
    \end{conjecture}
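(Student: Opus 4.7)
The plan is to split the conjecture by direction of implication, since (i) $\Rightarrow$ (iii) and (i) $\Rightarrow$ (ii) lie within reach of Stapledon's toric machinery, whereas the converses (iii) $\Rightarrow$ (i) and (ii) $\Rightarrow$ (i) are the essentially new content. For the easier direction, I would follow Stapledon's identification of $\Hstar(P;z)$ with an equivariant Hodge--Deligne polynomial of the complement of a $G$-invariant nondegenerate hypersurface in the dense torus of the toric variety $X_P$. This identification makes polynomiality automatic (the degree is bounded by $\dim P$) and reduces effectivity to a purity statement for the mixed Hodge structure on that complement, which is insensitive to the particular group $G$ acting on it.

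For the harder implications, the strategy is to study the equivariant linear system $H^0(X_P, \mathcal{O}(D_P))^G$ of $G$-invariant sections of the line bundle associated to $P$. The character of the full section space is the permutation character of $G$ on $P \cap M'$, and the coefficients of $\Hstar(P;z)$ repackage the discrepancy between this character and the denominator $\det(I - z \cdot \rho)$ degree by degree. Effectivity, or polynomiality together with growth estimates, should translate into a lower bound on the dimension of the invariant subsystem that is large enough for one to plausibly expect a nondegenerate member.

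The main obstacle is then an equivariant Bertini principle: having many invariant sections does not by itself guarantee that some invariant section is nondegenerate, because the $G$-fixed locus in the parameter space $\mathbb{P} H^0(X_P, \mathcal{O}(D_P))$ may sit entirely inside the discriminant. I would attack this face by face on $P$, using the orbit-stratified description of nondegeneracy on $X_P$ to reduce to checking, on each $G$-orbit of faces of $P$, that the restriction map on invariant sections surjects onto a linear system that cuts each face-torus orbit transversely. Obstructions detected at even one face would furnish the combinatorial certificates against (i) alluded to in the abstract, and the conjecture amounts to claiming that $\Hstar$-effectivity exhausts all such face-level obstructions. Matching local-to-global data equivariantly is where I expect the bulk of the difficulty to lie; for specific families (hypersimplices, orbit polytopes, graphic zonotopes) one can hope that the combinatorial structure of faces and stabilizers collapses this question to a manageable case check.
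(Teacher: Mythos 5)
There is a fundamental problem: you are attempting to \emph{prove} \Cref{conj:Stapledon}, but the paper in fact \emph{refutes} it. This statement is a conjecture of Stapledon, and \Cref{thm:counterex} (due to Santos and Stapledon) exhibits an explicit counterexample to the implication \eqref{item:polynomial}~$\implies$~\eqref{item:nondegen}. Specifically, take $P=[0,1]^3$ with $G=(\Z/2\Z)^2$ acting by the three $180$-degree rotations. A direct computation shows $\Hstar(P;z)=\chitriv+\chi_{\text{reg}}z+\chitriv z^2$, which is both polynomial and effective, yet every $G$-invariant section forces a factorization $(1+z)(1+xy+c(x+y))$ for some $c\in\C$, and this vanishing locus is singular at $z=-1$ whenever $1+xy+c(x+y)=0$ intersects the torus. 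So no $G$-invariant nondegenerate hypersurface with Newton polytope $P$ exists, and your plan for the ``hard direction'' \eqref{item:effective},\eqref{item:polynomial}~$\implies$~\eqref{item:nondegen} cannot go through in general.

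Your discussion of the equivariant Bertini obstruction --- that the $G$-fixed locus in $\mathbb{P}H^0(X_P,\mathcal{O}(D_P))$ may sit entirely inside the discriminant --- is actually the correct intuition for \emph{why} the implication fails; the cube example is exactly such a case, where the invariant linear system is small enough that every member is degenerate. But you frame this as an obstacle to overcome rather than recognizing it as fatal. The portion of your sketch covering \eqref{item:nondegen}~$\implies$~\eqref{item:effective}~$\implies$~\eqref{item:polynomial} is fine and follows Stapledon's toric argument; note that the paper gives a shorter, purely character-theoretic proof of \eqref{item:effective}~$\implies$~\eqref{item:polynomial} in \Cref{lem:effectiveimpliespoly}, using only that evaluation at the identity must recover the ordinary $\hstar$-polynomial and that effective characters cannot cancel. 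The open question that survives is whether \eqref{item:polynomial}~$\implies$~\eqref{item:effective}, and the productive direction is therefore to look for counterexamples to \emph{that} implication, or for additional hypotheses under which the full equivalence holds, rather than to pursue a proof of the conjecture as stated.
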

    
    
    Stapledon further showed that \eqref{item:nondegen} $\implies$ \eqref{item:effective} $\implies$ \eqref{item:polynomial}; see Sections \ref{subsec:EEseries} and \ref{subsec:nondegenerate} for details.
    In email correspondence that they shared with the authors in 2019, Santos and Stapledon presented a counterexample to \eqref{item:polynomial} $\implies$ \eqref{item:nondegen}, which we include here with their permission.
    
    \begin{theorem}[\cite{SSemail}, Counterexample to \Cref{conj:Stapledon}]\label{thm:counterex}
    Let $P=[0,1]^3\subset\R^3$ be the $3$-cube and let $G=(\Z/2\Z)^2=\{\id, \sigma,\tau,\sigma\tau=\tau\sigma\}$ act on $P$ by $180$ degree rotations as described in \Cref{fig:cubeaction}.
    Then $\Hstar(P;z)$ is polynomial and effective, but the toric variety of $P$ does not admit a $G$-invariant nondegenerate hypersurface.
    \end{theorem}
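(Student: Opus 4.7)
The plan is to prove the two claims separately, both by direct computation.

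For polynomiality and effectiveness of $\Hstar(P;z)$, I would start by computing $\chi_{tP}(g)$ for each $g \in G$. At the identity, this is the usual lattice-point count $(t+1)^3$ of the dilated unit cube, and for each of the three non-identity rotations $g$ the fixed locus is a line (the rotation axis), from which a direct count gives $\chi_{tP}(g) = t+1$ when $t$ is even and $0$ when $t$ is odd. Summing each series in closed form and multiplying by the corresponding denominator $\det(I-z\rho(g))$ should yield $\Hstar(P;z)(\id) = 1 + 4z + z^2$ and $\Hstar(P;z)(g) = 1+z^2$ for each non-identity $g$, giving polynomiality immediately. For effectiveness, I would decompose each coefficient -- viewed as a class function on $(\Z/2\Z)^2$ -- into irreducible characters, noting that the $z^0$ and $z^2$ coefficients recover the trivial character while the $z^1$ coefficient, with values $(4,0,0,0)$, is the regular representation, both manifestly effective.

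For the non-existence of a $G$-invariant nondegenerate hypersurface, the key observation is that since $P$ has no lattice points beyond its eight vertices, any candidate section of the ample line bundle on the toric variety $X_P$ is a Laurent polynomial $f = \sum_{v \in \{0,1\}^3} a_v x^v$. Invariance of the vanishing locus $V(f)$ is equivalent to $g \cdot f = \chi(g)f$ for some character $\chi$ of $G$, which translates to the system of linear relations $a_{g(v)} = \chi(g)\, a_v$ for every $g$ and $v$. I would enumerate the four characters of $G$ and, in each case, write down the resulting two-parameter family of invariant sections -- one parameter per $G$-orbit on the vertex set. For each family, Khovanskii-nondegeneracy requires the restriction $f|_F$ to cut out a smooth hypersurface in the torus of every face $F$; on a $2$-face this reduces to the classical condition $a_{v_1}a_{v_4} \neq a_{v_2}a_{v_3}$ at the square's opposite corners, together with non-vanishing of all coefficients.

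The hard part will be verifying, for each of the four characters, that the invariance constraints force some face restriction to be degenerate -- either satisfying the singularity relation $a_{v_1}a_{v_4} = a_{v_2}a_{v_3}$ on a $2$-face, or factoring nontrivially so that the hypersurface is singular along a lower-dimensional stratum. Pinpointing the obstructing face in each case, and showing the degeneracy persists for all values of the two free parameters, is the combinatorial heart of the argument. The rigidity of the $G$-orbit structure on the eight vertices, combined with the specific pairing patterns induced by the three rotations in \Cref{fig:cubeaction}, should make it possible to exhibit, for every character, a face on which nondegeneracy is unattainable, thereby ruling out any $G$-invariant nondegenerate hypersurface whatsoever.
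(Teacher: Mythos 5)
Your computation of $\Hstar(P;z)$ is essentially identical to the paper's and is correct: the fixed loci are rational line segments with denominator $2$, the fixed Ehrhart quasipolynomials are as you state, and multiplying by $\det(I-z\rho(g)) = (1-z^2)^2$ for the non-identity elements (eigenvalues $1,-1,-1,1$ on $M'\cong\Z^4$) yields $1+z^2$, while the identity gives $1+4z+z^2$. Decomposing the coefficient class functions gives $\chi_{\text{triv}} + \chi_{\text{reg}}z + \chi_{\text{triv}}z^2$, which is the paper's answer.

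For the nonexistence part, however, there is a genuine gap. First, your framework is broader than it should be: Stapledon's (and this paper's) definition of a $G$-invariant hypersurface requires $c_{\bv}=c_{\bw}$ for lattice points in the same orbit, i.e.\ a \emph{strictly} $G$-invariant section, not a semi-invariant one with an arbitrary character $\chi$. So the enumeration over four characters, while not wrong, introduces cases the paper's definition excludes, and in some of those cases the sign constraints would need extra care to see that the Newton polytope is really all of $P$. Second and more important, your plan is to locate a $2$-face $F$ on which $f|_F$ is degenerate via the square determinant condition. This does not work here: for the genuine $G$-invariant section $f = (1+z+xy+xyz) + c(x+y+xz+yz)$ and generic $c$, every $2$-face restriction (e.g.\ $1+cx+cy+xy$ on $z=0$) is smooth in its torus, since the determinant condition $c^2 = 1$ fails generically. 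The obstruction is not on a proper face. The step you would need --- and which your sketch does not supply --- is the algebraic factorization
\[
(1+z+xy+xyz) + c(x+y+xz+yz) = (1+z)\bigl(1+xy+c(x+y)\bigr),
\]
valid for all $c$, which immediately exhibits a singular locus $\{z=-1,\ 1+xy+c(x+y)=0\}$ inside the dense torus itself, so $f$ fails nondegeneracy on the top-dimensional face $F=P$. You gesture at this as the ``factoring nontrivially'' alternative, but you neither find the factorization nor recognize that it is the only mechanism at work; the ``combinatorial heart'' that you say ``should be possible'' is exactly what needs to be carried out, and the $2$-face route would come up empty.
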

    \begin{proof}
    By computing the Ehrhart series of the rational fixed polytope of the cube under each of these rotations, and then comparing these to a character table for $G$, one can find the $\Hstar$-series of $P$ under this action to be $\chi_{\text{triv}} + \chi_{\text{reg}}z + \chi_{\text{triv}}z^2$ where these characters indicate the trivial and regular representations of $G$.
    So the $\Hstar$-series is a polynomial and is effective.
    
    However, the toric variety of $P$ does not admit a $G$-invariant nondegenerate hypersurface.
    \Cref{fig:cubeaction} shows the two orbits of the vertices of $P$ under the action of $G$.
    So a $G$-invariant hypersurface in the toric variety of $P$ has the equation
        \[
        0 = (1+z+xy+xyz) + c(x+y+xz+yz) = (1+z)(1+xy+c(x+y))
        \]
    for some complex parameter $c$.
    Regardless of the value of $c$, such a hypersurface is singular when $z=-1$ and $1+xy+c(x+y)=0$.
    \end{proof}
    
    \begin{figure}
        \centering
        \begin{tikzpicture}[scale=2,>=stealth']
\newcommand\Square[1]{+(-#1,-#1) rectangle +(#1,#1)}


\coordinate[label=below:000] (000) at (0,0,1);
\coordinate[label=below:100] (100) at (1,0,1);
\coordinate[label=left:010] (010) at (0,0,0);
\coordinate[label=below right:110] (110) at (1,0,0);
\coordinate[label=above left:001] (001) at (0,1,1);
\coordinate[label=right:101] (101) at (1,1,1);
\coordinate[label=above:011] (011) at (0,1,0);
\coordinate[label=above:111] (111) at (1,1,0);

\coordinate (yellow) at ($ .25*(001) + .25*(011) + .25*(111) + .25*(101) $);
\coordinate (magenta) at ($ .5*(100) + .5*(101) $);
\coordinate (cyan) at ($ .5*(000) + .5*(001) $);

\draw (000) -- (100) -- (110);
\draw[dashed] (110) -- (010) -- (000);
\draw (000) -- (001);
\draw (110) -- (111);
\draw[dashed] (010) -- (011);
\draw[ultra thick,mycyan] (cyan) -- ($ .5*(110) + .5*(111) $);
\draw[ultra thick,myyellow] (yellow) -- ($ .25*(000) + .25*(010) + .25*(110) + .25*(100) $);
\draw[ultra thick,mymagenta]  ($ .5*(010) + .5*(011) $) -- (magenta);
\draw (100) -- (101);
\draw (001) -- (101) -- (111) -- (011) -- cycle;



\fill (000) \Square{1.5pt};
\fill (100) circle(1.2pt);
\fill (010) circle(1.2pt);
\fill (001) \Square{1.5pt};
\fill (110) \Square{1.5pt};
\fill (101) circle(1.2pt);
\fill (011) circle(1.2pt);
\fill (111) \Square{1.5pt};

\draw[myyellow,-{Stealth[scale=1]}] ($ (yellow) - (.2,0,0) $) to [out=110,in=70] ($ (yellow) + (.2,0,0) $) node[above right]{$\sigma\tau$};
\draw[mycyan,-Stealth] ($ (cyan) - (0,.2,0) $) to [out=200,in=160] node[left]{$\sigma$} ($ (cyan) + (0,.2,0) $);
\draw[mymagenta,-Stealth] ($ (magenta) - (0,.2,0) $) to [out=-20,in=20] node[right]{$\tau$} ($ (magenta) + (0,.2,0) $);

\end{tikzpicture}
        \caption{Action of $(\Z/2\Z)^2$ on the $3$-cube. Each of $\sigma$, $\tau$, and $\sigma\tau$ acts by a rotation of $180$ degrees around a corresponding axis. The vertices are divided into two orbits under this action, which are indicated by their shapes.}
        \label{fig:cubeaction}
    \end{figure}
    
    In this article we present a variety of techniques for computing and studying the equivariant Ehrhart series.
    Our aim is to develop equivariant Ehrhart theory as its own branch of discrete geometry, and to investigate Stapledon's conjectures for a number of families of polytopes.
    We demonstrate how to use several tools to describe the equivariant Ehrhart series, including zonotopal decompositions, symmetric triangulations, combinatorial interpretations of the $\hstar$-polynomial, and computational methods using \texttt{Sagemath}.
    These techniques serve as a guidebook for future progress toward resolving the many open questions and conjectures in this field.
    
    We first provide more details about our setup in \Cref{sec:background}, as well as presenting the necessary background in equivariant Ehrhart theory and representation theory.
    We then arrive to our catalog of techniques in \Cref{sec:techniques}.
    
    \Cref{sec:zonotopes} investigates the case when $P$ is a graphic zonotope and $G$ is the automorphism group of the graph.
    This section generalizes past work on the permutahedron \cite{ArdilaSchindlerVindas, ASV}, and serves as a blueprint for studying the equivariant Ehrhart theory of general zonotopes.
    The main results of this section are \Cref{cor:graphic zonotope ehrhart series} which describes the equivariant Ehrhart series of a graphic zonotope, and \Cref{thm:graphic zonotope polynomial H*} in which we characterize the conditions under which the $\Hstar$-series of a graphic zonotope is a polynomial.

    In \Cref{sec:invariant decomps}, we use symmetric triangulations to calculate the equivariant Ehrhart series.
    The symmetric triangulations in question are called \emph{$G$-invariant half-open decompositions} and are closely related to partitionability of posets.  
    In \Cref{thm:permrep}, we generalize a theorem of Stapledon for the equivariant Ehrhart series of a simplex to lattice polytopes with $G$-invariant half-open decompositions.
    In \Cref{thm:eeseries2}, we show that $\chi_{tP}$ is polynomial in $t$ for polytopes with $G$-invariant half-open decompositions that satisfy an additional property on the group action, and give a formula for the equivariant Ehrhart series. This section appears in \cite{Elia2022}.
    
    \Cref{sec:breaking Hstar} deals with instances where we can break down the $\Hstar$-series using a combinatorial interpretation of its coefficients.
    In particular, we study hypersimplices under the action of the cyclic group, as well as permutahedra in prime dimensions.
    \Cref{thm: hypersimplex cyclic group} uses decorated ordered set partitions to describe the $\Hstar$-series of the hypersimplex, building off of \cite{Kim}.
    In \Cref{thm:cyclicaction_trivialfixed}, we give an explicit formula for the $\Hstar$-series of a polytope under a cyclic group action with trivial fixed subpolytopes. This is specified to the case of prime permutahedra in \Cref{cor:Hstarprime}, which also shows the $\Hstar$-series is polynomial and effective.
    
    In \Cref{sec:hypersurfaces} we focus on criteria \eqref{item:nondegen} from \Cref{conj:Stapledon}, demonstrating techniques for proving the existence or non-existence of $G$-invariant nondegenerate hypersurfaces.
    The main results of this section are \Cref{thm: op no hypersurface} and \Cref{cor:oddrectface} which characterize a family of $S_n$-orbit polytopes for which no such hypersurface can exist, and \Cref{thm: hypersimplex hypersurface} regarding hypersimplices, another subfamily of orbit polytopes which do exhibit these hypersurfaces.

    Finally, in \Cref{sec:further questions}, we collect the many open questions that arose during our work on this project, and in \Cref{appendix:sage}, we demonstrate the calculation of the equivariant Ehrhart series using our implemented code which is staged for release with \texttt{Sagemath} version 9.6.

\section{Background}\label{sec:background} 
    
We give relevant background on Ehrhart theory in \Cref{subsec:Ehrahrt_Theory}. In \Cref{subsec:equisetup}, we present the general setup for equivariant Ehrhart theory that is used throughout. \Cref{subsec:reptheory} presents background on representation theory, and \Cref{subsec:EEseries} presents background on the equivariant Ehrhart series, in particular providing rational generating functions. In \Cref{subsec:nondegenerate}, we discuss $G$-invariant nondegenerate hypersurfaces and a useful corollary for their detection. In \Cref{sec:restricted_reps}, we present some motivating lemmas on equivariant Ehrhart theory for restricted representations.   

\subsection{Ehrhart Theory}\label{subsec:Ehrahrt_Theory}
    
The main reference we follow for Ehrhart theory is \cite{beck_robins}.
Fix a lattice $M\subset \R^n$.
The \emph{Ehrhart counting function} of a polytope
 $P \subset \R^n$, written $\ehr(P;t)$, gives the number of lattice points in the $t$-th dilate of $P$ for $t \in \Z _{\geq 1}$: $$\ehr(P;t)= |tP \cap M|.  $$
Ehrhart's theorem \cite{Ehrhart1962} says that if $P$ is a lattice polytope (i.e. the vertices of $P$ are contained in $M$), then for positive integers, $\ehr(P;t)$ agrees with a polynomial in $t$ of degree equal to the dimension of $P$. Furthermore, the constant term of this polynomial is equal to 1 and the coefficient of the leading term is equal to the Euclidean volume of $P$ within its affine span. 
The interpretation of other coefficients of the Ehrhart polynomial is an active direction of research, see for example \cite{ferroni2021ehrhart}. 

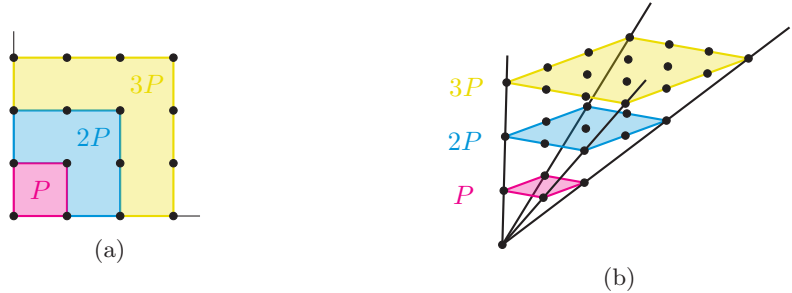
\begin{figure}
    \centering
        \begin{subfigure}{.4\linewidth}
        \centering
        \begin{tikzpicture}%
	[scale=.7,
	edge/.style={thick},
	facet/.style={fill=mymagenta,fill opacity=0.300000}]
%
%
\draw[black] (0,3.5) -- (0,0) -- (3.5,0);

\draw[edge,color=mymagenta] (0,0) -- (1,0) -- (1,1) -- (0,1) -- (0,0);
\fill[facet] (0,0) -- (1,0) -- (1,1) -- (0,1) -- (0,0);
\node at (.5,.5) {\color{mymagenta}$P$};
\draw[edge,color=mycyan] (1,0) -- (2,0) -- (2,2) -- (0,2) -- (0,1);
\fill[color=mycyan, fill opacity=.3] (1,0) -- (2,0) -- (2,2) -- (0,2) -- (0,1) --(1,1) -- (1,0);
\node at (1.5,1.5) {\color{mycyan} $2P$};
\draw[edge,color=myyellow, fill opacity=.3] (2,0) -- (3,0) -- (3,3) -- (0,3) -- (0,2);
\fill[color=myyellow, fill opacity=.3](2,0) -- (3,0) -- (3,3) -- (0,3) -- (0,2) -- (2,2) -- (2,0);
\node at (2.5,2.5) {\color{myyellow}$3P$};

\foreach \i in {0,...,3}{
    \foreach \j in {0,...,3}{
        \draw[black, fill=black] (\i,\j) circle (2pt);
    }
}
\end{tikzpicture}
        \caption{}
        \label{subfig:ehrhart poly}
        \end{subfigure}
    \hspace{1mm}
        \begin{subfigure}{.4\linewidth}
        \centering
        \begin{tikzpicture}[scale=.7,
	edge/.style={thick},
	facet/.style={fill opacity=0.300000},
	rotate around x = -94, 
	rotate around y=0,
	rotate around z=-25]

%

\draw[edge, color=black] (0,0,0) -- (0,3.5,3.5);

\draw[edge,color=mymagenta] (0,0,1)--(0,1,1)--(1,1,1)--(1,0,1)--cycle;
\fill[facet,fill=mymagenta] (0,0,1)--(0,1,1)--(1,1,1)--(1,0,1)--cycle;

\draw[edge, color=mycyan] (0,0,2)--(0,2,2)--(2,2,2)--(2,0,2)--cycle;
\fill[facet, fill=mycyan] (0,0,2)--(0,2,2)--(2,2,2)--(2,0,2)--cycle;

\draw[edge, color=myyellow] (0,0,3)--(0,3,3)--(3,3,3)--(3,0,3)--cycle;
\fill[facet, fill=myyellow] (0,0,3)--(0,3,3)--(3,3,3)--(3,0,3)--cycle;

\draw[edge, color=black] (0,0,0) -- (0,0,3.5);
\draw[edge, color=black] (0,0,0) -- (3.5,0,3.5);
\draw[edge, color=black] (0,0,0) -- (3.5,3.5,3.5);

\foreach \p in { (0,0,0),(0,0,1), (0,0,2),(0,0,3),(1,0,1),(1,0,2),(1,0,3),(0,1,1),(0,1,2),(0,1,3),(1,1,1),(1,1,2),(1,1,3),(0,2,2),(0,2,3),(2,0,2),(2,0,3),(2,2,2),(2,2,3),(2,1,2),(2,1,3),(1,2,2),(1,2,3),(0,3,3),(1,3,3),(2,3,3),(3,3,3),(3,2,3),(3,1,3),(3,0,3)}{
	\draw[black,fill=black] \p circle (2pt);
}

\node at (-.5,-.5,1) {\color{mymagenta}$P$};
\node at (-.5,-.5,2) {\color{mycyan}$2P$};
\node at (-.5,-.5,3) {\color{myyellow}$3P$};

\end{tikzpicture}
        \caption{}
        \label{subfig:ehrhart series}
        \end{subfigure}
    \caption{\eqref{subfig:ehrhart poly} The first three dilates of $P$ and \eqref{subfig:ehrhart series} the cone over $P$.}
    \label{fig:ehrhart}
\end{figure}

Generating functions play a central role in Ehrhart theory. 
The \emph{Ehrhart series} $\Ehr(P;z)$ of a polytope$~P$ is the formal power series given by $$\Ehr(P;z) = 1 + \sum_{t \geq 1} \ehr(P;t)z^t. $$ 
Let $\Cone(P) \coloneqq \{\bx \in \R^{n+1} : \bx = \lambda(\by,1), \text{ where }\by \in P,\, \lambda \geq 0 \}$ as in \Cref{fig:ehrhart}.
We may view the coefficient of $z^t$ in the Ehrhart series as counting the number of lattice points in $\Cone(P)$ at height $t$. This viewpoint allows us to show that if
 $P$ is a $d$-dimensional lattice polytope, then the Ehrhart series has the rational generating function
$$\Ehr(P;z)=1 + \sum_{t \geq 1}\ehr(P;t)z^t = \frac {\hstar(P;z)}{(1-z)^{d+1}} ,$$
where $\hstar(P;z) = \sum_{i=0}^d \hstar_i z ^i $ is a polynomial in $z$ of degree at most $d$, called the \emph{$\hstar$-polynomial}. 
Furthermore, each $\hstar_i$ is a non-negative integer \cite{stanley1980decompositions}. 
The coefficients of the $\hstar$-polynomial form the \emph{$\hstar$-vector}: $(\hstar_0,\hstar_1,\dots, \hstar_d)$.
%
The Ehrhart polynomial may be recovered from the $\hstar$-vector through the transformation \begin{equation}\label{eq:hstartoehr} \ehr_P(t) = \sum_{i=0}^{d}\hstar_i \binom{t+d -i}{d}.\end{equation}

Let $P \subseteq \R^d$ be a rational $d$-polytope with \emph{denominator} $k$, i.e., $k$ is the smallest positive integer such that $kP$ is a lattice polytope.
Then $\ehr(P;t)$ is a \emph{quasipolynomial} with period dividing $k$, i.e., of the form
$\ehr( P;t) = c_d(t) t^d + \dots + c_1(t) t + c_0(t)$ where $c_0(t), c_1(t),
\dots, c_d(t)$ are periodic functions. 
In this case, the Ehrhart series has the rational generating function
\begin{equation}\label{eq:ehrgenfct1}
 \Ehr(P;z) \ := \sum_{t\in \Z_{\geq0}} \ehr(P;t) \, z^t \ 
 = \ \frac{\hstar(P;z)}{(1-z^k)^{d+1}},
\end{equation}
where $\hstar(P;z) \in \Z[z]$ has degree $< k (d+1)$.
The choice of denominator is no longer canonical.

     \subsection{The Equivariant Setup}\label{subsec:equisetup}
     For a lattice and $\Z$-module $M$, we write $M_{\R}$ for $M \otimes_{\Z} \R$.

    \begin{setup}\label{setup}
    Let $G$ be a finite group acting linearly on a lattice $M' \cong M \times \Z$ of rank $d+1$ such that the $\Z$-coordinate of the lattice points in $M'$ is preserved under the action of $G$. 
    Let $P \subset  M'_\R $ be a $d$-dimensional $G$-invariant polytope with vertices in $M \times \{1\}$, where \emph{$G$-invariant} means that as a set, $g(P) = P$, for all $g \in G$.
    \end{setup}
    
    We assume $M = \Z^d$ when convenient.
    The \emph{exponent} of a finite group $G$ is the smallest positive integer $N$ such that $g^N = \id_G$ for all $g \in G$.
    For an element $g \in G$, the subset of $P$ fixed by $g$, denoted $P^g$ and called the \emph{fixed subpolytope}, is the convex hull of the barycenters of orbits of vertices of $P$  under the action of $g$ \cite[Lemma 5.4]{Stapledon}.
    This implies that any fixed subpolytope $P^g$ for $g \in G$ is a rational polytope with denominator dividing $N$.

    \subsection{Representation Theory}\label{subsec:reptheory}
    For a nice introduction to representation theory of finite groups, see \cite{Sagan} or \cite{Serre}.
    Let $V$ be an $n$-dimensional vector space over a field $K$ of characteristic $0$. 
    A \emph{representation} $\rho$ of a group $G$ on $V$ is a group homomorphism $\rho: G \rightarrow GL(V)$ from $G$ to the group of invertible $K$-linear transformations of~$V$. Equivalently $\rho$ may be seen as a group homomorphism from $G$ to the group $GL_n(V)$ of $n\times n$ invertible matrices with entries in $K$.
    A subspace $W \subseteq V$ is called \emph{ $G$-invariant} if $W = g(W)$ as a set, for all $g \in G$.
    A representation is \emph{irreducible} if there are no nontrivial, proper invariant subspaces of $V$ under the action of $G$.

    Let $\rho: G \rightarrow GL_n(V)$ be a representation of $G$. 
    The \emph{character} of $\rho$, written $\chi_\rho$, is the function $G \rightarrow \C$ such that $\chi_\rho(g) := \trace(\rho(g))$;
    the trace of a matrix is the sum of its diagonal entries.
    The characters of irreducible representations are referred to as \emph{irreducible characters}.
    A group $G$ always has a \emph{trivial representation} on a vector space $V$ which sends each group element to the identity map on $V$.
    The character of the trivial representation is denoted by $\chitriv$ throughout.
    Characters are \emph{class functions}, functions from the group to the complex numbers that take the same value on every conjugacy class.
    In fact, a class function is a character of a representation if and only if it can be written as a nonnegative integral linear combination of the irreducible characters of $G$. 
    We write $R^+(G)$ for the set of the class functions that are characters and refer to them as \emph{effective characters}.
    The group generated by $R^+(G)$ is denoted $R(G)$ and called the \emph{character ring}; it consists of all class functions that can be written as linear integral combinations of the irreducible characters. 
    Elements of $R(G)$ are referred to as \emph{virtual characters}.
    The character ring $R(G)$ is a subring of the $\C$-vector space $F_{\C}(G)$ of class functions on $G$ with values in $\C$, called the \emph{ring of class functions}.
    There is an inner product on $F_{\C}(G)$ such that for two class functions $\phi,\chi$ of $G$,
    $\langle \phi, \chi \rangle = \frac{1}{|G|}\sum_{g \in G} \phi(g) \overline{\chi(g)}$, where~$\overline{\cdot}$ denotes the complex conjugate.
    
    Let a group $G$ act on a finite (ordered) set $S$. The \emph{permutation representation} of $G$ on $S$ is the group homomorphism that sends every element of $G$ to the corresponding permutation matrix for the action of $G$ on $S$. The character of the permutation representation is called the \emph{permutation character}.

    \subsection{Equivariant Ehrhart Theory}\label{subsec:EEseries}

    Let $\chi_{tP}$ be the complex permutation character induced by the action of $G$ on the lattice points in $tP \cap M'$. 
    
    \begin{definition}\label{def:equivehrseries}
    The \emph{equivariant Ehrhart series}, $\EE(P;z)$, is the formal power series in $R(G)[[z]]$, such that the coefficient of $z^t$ for $t \in \Z_{\geq 0}$ is the character $\chi_{tP}$: 
        $$ \EE(P;z) = \sum_{t \geq 0} \chi_{tP} z^t.$$ 
    \end{definition}
    
    Evaluating the series at $g \in G$, $\EE(P;z)(g)$, gives the Ehrhart series of the fixed subpolytope $P^g$ {\cite[Lemma 5.2]{Stapledon}}.

\begin{theorem}[{\cite[Theorem 5.7]{Stapledon}}]\label{thm:chiquasi}
Let $P$ be a lattice $d$-polytope invariant under the action of a group $G$ as in the Setup \ref{setup} and exponent $N \geq 1$.
As a function of $t$, $\chi_{tP}$ is quasipolynomial, that is,
$$
\chi_{tP} = f_0(t)t^0 + f_{1}(t)t^1 + \dots + f_d(t) t^d,
$$
where $f_0(t), f_1(t), \dots, f_d(t) \in F_{\C}(G)$ are periodic functions in $t$ with period dividing $N$. 
\end{theorem}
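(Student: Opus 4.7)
The plan is to reduce the claim to the classical quasipolynomiality of Ehrhart functions applied to each fixed subpolytope $P^g$. The excerpt has already supplied two inputs that I will use as black boxes: first, the evaluation identity $\EE(P;z)(g) = \Ehr(P^g;z)$ recorded just before the theorem, which at the coefficient level reads $\chi_{tP}(g) = \L(P^g;t)$; and second, the observation in the setup that every fixed subpolytope $P^g$ is a rational polytope with denominator dividing $N$.

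Given these, for each fixed $g\in G$ the function $t\mapsto \chi_{tP}(g)$ is the Ehrhart quasipolynomial of a rational polytope with denominator dividing $N$, so it has the form $\sum_{i=0}^{d} c_i^g(t)\, t^i$, where each $c_i^g\colon \Z \to \C$ is periodic with period dividing $N$, and where the terms of degree exceeding $\dim P^g$ simply vanish. I would then define $f_i(t)\in F_{\C}(G)$ by $f_i(t)(g) := c_i^g(t)$, so that the equation $\chi_{tP} = \sum_{i=0}^{d} f_i(t)\, t^i$ holds pointwise on $G$ by construction, and the required periodicity of each $f_i(t)$ in $t$ with period dividing $N$ is immediate.

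The only substantive step is checking that each $f_i(t)$ is actually a class function, i.e.\ that $c_i^{hgh^{-1}}(t) = c_i^g(t)$ for every $h\in G$. I would establish this by first showing $\L(P^{hgh^{-1}};t) = \L(P^g;t)$ and then appealing to uniqueness of the quasipolynomial representation once a common period is fixed. The Ehrhart identity follows from the equality $P^{hgh^{-1}} = h(P^g)$, since $x$ is fixed by $hgh^{-1}$ exactly when $h^{-1}(x)$ is fixed by $g$; because $h$ acts on $M'$ by a $\Z$-linear lattice automorphism, it induces a bijection $tP^g\cap M' \leftrightarrow tP^{hgh^{-1}}\cap M'$ for every $t\in\Z_{\geq 0}$.

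The main obstacle I anticipate is precisely this class-function check, and in particular the uniqueness of the coefficient extraction feeding into it. The functions $c_i^g$ are not canonically determined if we let the period vary with $g$, so the key is to use the uniform period $N$ throughout; since every denominator divides $N$, each $c_i^g$ can be viewed as a well-defined function on $\Z/N\Z$, and the decomposition $\L(P^g;t) = \sum_{i} c_i^g(t)\, t^i$ becomes unique. This uniform choice is the reason the theorem asserts period dividing $N$, rather than merely finite period, and it is what allows the pointwise-defined $f_i(t)$ to assemble into genuine class functions on $G$.
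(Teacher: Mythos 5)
Your proposal is correct and takes essentially the same route as the paper: both reduce to the classical Ehrhart quasipolynomiality of the rational fixed subpolytopes $P^g$ (with denominator dividing $N$) and assemble the resulting coefficients $c_i^g(t)$ into the class functions $f_i(t)$. The one small difference is how the class-function property is obtained: you verify directly that $c_i^{hgh^{-1}}(t)=c_i^g(t)$ via the identity $P^{hgh^{-1}}=h(P^g)$ and uniqueness of the period-$N$ quasipolynomial expansion, whereas the paper defines the $f_{j,i}$ only on conjugacy class representatives and implicitly extends, relying on the fact that both sides of the resulting identity are a priori class functions (since $\chi_{tP}$ is a character); your version is a touch more explicit but not a different argument in substance.
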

\begin{proof}
For all $g \in G$, the Ehrhart counting function $\ehr(P^g;z)$ can be expressed as a quasipolynomial of period $N$ and degree equal to $\dim(P^g) \leq d$.
Therefore,
\begin{align*}
\EE(P;z)(g) = \sum_{t \geq 0}\chi_{tP}(g)z^t &= \sum_{a \geq 0}\sum_{j = 0}^{N-1}\chi_{(aN+j)P}(g)z^{aN+j} \\
& = \sum_{a \geq 0}\sum_{j = 0}^{N-1} \ehr(P^g;aN+j)z^{aN+j}\\
& = \sum_{a \geq 0}\sum_{j = 0}^{N-1} \left( \sum_{i = 0}^{d}c_{j,i}^g (aN+j)^i \right)z^{aN+j},
\end{align*}
where $c_{j,i}^g \in \Q$ is the coefficient of the degree $i$ term in the $j$-th constituent of the Ehrhart quasipolynomial of $P^g$. 
Let $\{g_1, \dots, g_k \}$ be conjugacy class representatives of $G$.
Define class functions
$f_{j,i}[g_\ell] \coloneqq c_{j,i}^{g_\ell}$ for all $j\in[N]$, $\ell \in [k],$ and $i \in \{0,1,\dots,d \}$.
Then for $j \in [N]$, and $t \equiv j \mod N$, $\chi_{tP} = {f_{j,0}t^0 + \dots + f_{j,d} t^d}$.
\end{proof}
\begin{corollary}\label{cor:quasiirreds}
Fix $j \in [N]$. For all $t \equiv j \mod N$, $\chi_{tP}$ may be expressed as a linear combination of the irreducible characters of $G$ with coefficients in $\Q[t]$.
\end{corollary}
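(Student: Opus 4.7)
The plan is to combine Theorem 2.2 with the observation that for each non-negative integer $t$, $\chi_{tP}$ is itself a genuine permutation character, hence a non-negative integer combination of the irreducibles of $G$. The quasipolynomial structure, together with orthogonality of characters, will force each ``coefficient of $\chi_\alpha$'' to agree with a polynomial in $t$ on the progression $t \equiv j \pmod N$, and a standard interpolation argument then forces that polynomial to have rational coefficients.

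First, I would fix $j \in [N]$ and apply Theorem 2.2 to write
\[
\chi_{tP} \;=\; \sum_{i=0}^{d} f_{j,i}\, t^{i}
\]
as an identity in $F_{\C}(G)$ for every integer $t \equiv j \pmod N$, where each $f_{j,i}$ is a class function on $G$. Since the irreducible characters $\{\chi_\alpha\}$ form a $\C$-basis of $F_{\C}(G)$, I would expand $f_{j,i} = \sum_{\alpha} c_{j,i,\alpha}\,\chi_\alpha$ with $c_{j,i,\alpha} \in \C$, and rearrange to obtain
\[
\chi_{tP} \;=\; \sum_\alpha p_{j,\alpha}(t)\, \chi_\alpha, \qquad p_{j,\alpha}(t) \;:=\; \sum_{i=0}^d c_{j,i,\alpha}\, t^i \;\in\; \C[t].
\]

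Next I would use that for every $t \in \Z_{\geq 0}$ the class function $\chi_{tP}$ is the character of the permutation representation on $tP\cap M'$, so that by orthogonality
\[
p_{j,\alpha}(t) \;=\; \langle \chi_{tP},\, \chi_\alpha \rangle \;\in\; \Z_{\geq 0}
\]
whenever $t \equiv j \pmod N$ with $t \geq 0$. The main (and really the only) step is then to upgrade ``$p_{j,\alpha} \in \C[t]$ takes integer values at every sufficiently large $t$ in an arithmetic progression'' to the conclusion $p_{j,\alpha} \in \Q[t]$. I would do this by choosing any $\deg(p_{j,\alpha}) + 1$ values of $t \equiv j \pmod N$ in $\Z_{\geq 0}$ and invoking Lagrange interpolation: since both the interpolation nodes and the prescribed values are rational, the resulting coefficients of $p_{j,\alpha}$ are rational, and the interpolant must coincide with $p_{j,\alpha}$. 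This yields $p_{j,\alpha}(t) \in \Q[t]$ for every irreducible $\chi_\alpha$, completing the decomposition with coefficients in $\Q[t]$; no substantive obstacle arises beyond this standard rationality observation.
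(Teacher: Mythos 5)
Your proof is correct, and since the paper states this corollary without supplying an argument, yours fills in exactly the justification one expects: expand $\chi_{tP} = \sum_\alpha p_{j,\alpha}(t)\,\chi_\alpha$ using the quasipolynomial form from \Cref{thm:chiquasi}, observe via orthogonality that $p_{j,\alpha}(t) = \langle \chi_{tP},\chi_\alpha\rangle \in \Z_{\geq 0}$ whenever $t\equiv j \pmod N$ because $\chi_{tP}$ is a genuine permutation character, and conclude $p_{j,\alpha}\in\Q[t]$ by Lagrange interpolation on $d+1$ nodes from the progression. One point worth emphasizing, since it could tempt a reader into a shortcut: it would \emph{not} suffice to note that each class function $f_{j,i}$ is $\Q$-valued (which it is, since the Ehrhart coefficients $c_{j,i}^g$ are rational) and then claim its expansion over the irreducibles has rational coefficients --- a rational-valued class function need not expand rationally over the irreducibles (e.g.\ the indicator of a non-identity element of $\Z/3\Z$). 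The integrality of the multiplicities $\langle\chi_{tP},\chi_\alpha\rangle$ coming from $\chi_{tP}$ being an honest character is what makes the rationality work, and your argument uses exactly that.
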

\begin{corollary}
Let $P$ be a rational polytope with denominator $S \in \Z_{> 0}$ such that~$P$ is invariant under the linear action of a group $G$. 
Then $\chi_{tP}$ is quasipolynomial in $t$ with period dividing $S N$.  
\end{corollary}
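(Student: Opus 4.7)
The plan is to mirror the proof of \Cref{thm:chiquasi} essentially verbatim, with the one substantive change being a sharper bound on the period of the Ehrhart quasipolynomial of each fixed subpolytope. The only ingredient of that earlier proof that actually uses the lattice hypothesis on $P$ is the fact (\cite[Lemma 5.4]{Stapledon}) that $P^g$ has denominator dividing $N$; everything else is a manipulation of quasipolynomial constituents and class functions on conjugacy class representatives.

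First, I would observe that the description of the fixed subpolytope carries over to the rational setting: for any $g \in G$, the subset $P^g$ is still the convex hull of the barycenters of the $g$-orbits of vertices of $P$, since this characterization is purely geometric and does not require integrality. The vertices of $P$ lie in $\tfrac{1}{S} M'$, and each $g$-orbit has size dividing the order of $g$, which in turn divides $N$. Therefore the barycenter of each orbit lies in $\tfrac{1}{SN} M'$, so $P^g$ is a rational polytope with denominator dividing $SN$. Consequently $\ehr(P^g; t)$ is a quasipolynomial in $t$ of period dividing $SN$ and degree $\dim(P^g) \leq d$.

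Next, using \cite[Lemma 5.2]{Stapledon} to write $\chi_{tP}(g) = \ehr(P^g; t)$, I would split the series $\EE(P;z)(g)$ by residue classes modulo $SN$ rather than modulo $N$:
\[
\EE(P;z)(g) \;=\; \sum_{a \geq 0}\sum_{j=0}^{SN-1} \ehr(P^g; aSN + j)\, z^{aSN+j} \;=\; \sum_{a \geq 0}\sum_{j=0}^{SN-1} \left(\sum_{i=0}^{d} c_{j,i}^g\,(aSN+j)^i\right) z^{aSN+j},
\]
where $c_{j,i}^g \in \Q$ is the coefficient of $t^i$ in the $j$-th constituent of the Ehrhart quasipolynomial of $P^g$.

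Finally, exactly as in the proof of \Cref{thm:chiquasi}, I would fix conjugacy class representatives $g_1, \dots, g_k$ of $G$ and define class functions $f_{j,i}[g_\ell] \coloneqq c_{j,i}^{g_\ell}$ for $j \in \{0,\dots,SN-1\}$, $\ell \in [k]$, and $i \in \{0,\dots,d\}$; the fact that $f_{j,i}$ is constant on conjugacy classes follows because $P^g$ and $P^{hgh^{-1}}$ are related by the linear action of $h$ and so have the same Ehrhart quasipolynomial. This yields $\chi_{tP} = f_{j,0} + f_{j,1} t + \cdots + f_{j,d} t^d$ whenever $t \equiv j \pmod{SN}$, which is exactly the claim that $\chi_{tP}$ is quasipolynomial in $t$ with period dividing $SN$. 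The only genuinely new step is the denominator bound on $P^g$, and that is immediate once the description as a convex hull of orbit barycenters is in hand.
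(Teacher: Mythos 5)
Your proposal is correct, and the paper itself gives no proof of this corollary — it is stated immediately after Theorem~\ref{thm:chiquasi} and left to the reader, so the only comparison available is against what the authors evidently had in mind. Your argument is exactly that: the single new ingredient is the denominator bound on the fixed subpolytope, and you obtain it cleanly from the orbit--barycenter description (each $g$-orbit of vertices of $P$ has size $\ell$ dividing $N$, the vertices lie in $\tfrac{1}{S}M'$, so the barycenter lies in $\tfrac{1}{S\ell}M' \subseteq \tfrac{1}{SN}M'$, giving $P^g$ denominator dividing $SN$). From there you rerun the proof of Theorem~\ref{thm:chiquasi} verbatim with residue classes taken modulo $SN$ rather than $N$, which is precisely what the corollary asserts. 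The remark that $f_{j,i}$ is a well-defined class function because $P^g$ and $P^{hgh^{-1}}$ are linearly isomorphic (hence have the same Ehrhart quasipolynomial) is a small but welcome clarification that the paper's own proof of Theorem~\ref{thm:chiquasi} leaves implicit.
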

\begin{theorem}\label{thm:eerational}
Let $P$ be a $G$-invariant lattice polytope of dimension $d$ as in the Setup \ref{setup}, where $G$ has exponent $N$ and irreducible characters $\{\chi_1, \dots, \chi_k \}$.
The equivariant Ehrhart series has the following rational expression:
$$
\EE(P;z) = \sum_{t \geq 0}\chi_{tP}z^t = \frac{\tilde{\Hstartil}(P;z)}{\left(1-z^N\right)^{d+1}},
$$
where $\tilde{\Hstartil}(P;z)$ is a polynomial with coefficients in $R(G) \otimes_{\Z} \Q $ of degree ${\leq N(d+1) -1 }$.
\end{theorem}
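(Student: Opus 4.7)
The plan is to split the generating function $\EE(P;z)$ by residue class modulo $N$ and apply \Cref{cor:quasiirreds}. By that corollary (which follows from the proof of \Cref{thm:chiquasi}), for each $j \in \{0,1,\dots,N-1\}$ and every $t \equiv j \pmod{N}$ we can write
$$\chi_{tP} = \sum_{\ell=1}^{k} q_{j,\ell}(t)\,\chi_\ell,$$
where the $q_{j,\ell}(t) \in \Q[t]$ have degree at most $d$; the bound $d$ on the degree is inherited from the quasipolynomial degree bound in \Cref{thm:chiquasi}.

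Granted this, the main computation proceeds in two steps. First, I would partition the sum defining $\EE(P;z)$ according to the residue of $t$ modulo $N$ and substitute the character expansion above, obtaining
$$\EE(P;z) = \sum_{\ell=1}^{k} \chi_\ell \cdot \sum_{j=0}^{N-1} z^j \sum_{a \ge 0} q_{j,\ell}(aN+j)\,(z^N)^a.$$
Second, I would apply the elementary identity that $\sum_{a \ge 0} p(a)\,w^a = r(w)/(1-w)^{d+1}$ for some $r \in \Q[w]$ with $\deg r \le d$, valid for any $p \in \Q[a]$ of degree at most $d$. Since the affine change of variable $a \mapsto aN+j$ preserves degree, applying this with $w = z^N$ to each inner sum gives
$$\sum_{a \ge 0} q_{j,\ell}(aN+j)\,(z^N)^a = \frac{r_{j,\ell}(z^N)}{(1-z^N)^{d+1}}$$
for some $r_{j,\ell} \in \Q[w]$ with $\deg r_{j,\ell} \le d$. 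Combining the $N\cdot k$ terms over the common denominator $(1-z^N)^{d+1}$ yields the claimed rational form, with numerator
$$\tilde{\Hstartil}(P;z) = \sum_{\ell=1}^{k} \chi_\ell \sum_{j=0}^{N-1} z^j\, r_{j,\ell}(z^N) \; \in \; (R(G) \otimes_{\Z} \Q)[z].$$

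For the degree bound, each summand $z^j\, r_{j,\ell}(z^N)$ has degree at most $(N-1) + Nd = N(d+1)-1$ in $z$, so $\deg \tilde{\Hstartil}(P;z) \le N(d+1)-1$. There is no real obstacle beyond bookkeeping: \Cref{thm:chiquasi} and \Cref{cor:quasiirreds} already encode the geometric input (quasipolynomiality of the fixed-polytope Ehrhart functions), and the remainder is a rearrangement of standard rational generating-function manipulations with scalar coefficients promoted to virtual characters. The one place where care is required is tracking that the degree bound on $q_{j,\ell}$ survives the shift by $j$ and the reindexing by $N$, which it does because $a \mapsto aN+j$ is affine.
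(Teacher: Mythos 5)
Your proof is correct and follows the natural route that the paper sets up via Theorem \ref{thm:chiquasi} and Corollary \ref{cor:quasiirreds}: split by residue class mod $N$, promote the scalar rational-generating-function identity $\sum_{a\ge 0}p(a)w^a = r(w)/(1-w)^{\deg p+1}$ to coefficients in $R(G)\otimes_\Z\Q$, and track degrees. The degree bookkeeping ($z^j r_{j,\ell}(z^N)$ having degree at most $(N-1)+Nd=N(d+1)-1$) and the observation that the affine substitution $t=aN+j$ preserves degree are both handled correctly.
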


    
    
    We therefore have two different rational expressions for the equivariant Ehrhart series:
    $$
     \EE(P;z) = \sum_{t \geq 0} \chi_{tP} z^t = \frac{\tilde{\Hstartil}(P;z)}{\left(1-z^N\right)^{d+1}} = \frac{\Hstar(P;z)}{\det(I - z \cdot \rho )} \, ,
    $$
    where $\Hstar(P;z) \in R(G)[[z]]$.
    The $\Hstar$-series is \emph{effective} if the coefficient of $z^i$ is an effective character for all $i$.
    
    \begin{remark}
    In \cite{Stapledon}, the $\Hstar$-series is denoted by $\phi$, and $\rho$ is the representation of $G$ acting on $M_\R$. Accordingly, the denominator in \cite{Stapledon} is 
    $
    (1-z)\det(I - z \cdot \rho).
    $
    \end{remark}

    Understanding the $\Hstar$-series is the main goal of Stapledon's  \Cref{conj:Stapledon}, and he stated that \eqref{item:effective} implies \eqref{item:polynomial}. We provide a short proof:

    \begin{lemma}\label{lem:effectiveimpliespoly}
    If $\Hstar(P;z)$ is effective then $\Hstar(P;z)$ is a polynomial.
    \end{lemma}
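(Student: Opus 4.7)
The plan is to evaluate the series at the identity element of $G$ and exploit the non-negativity built into effectiveness. Recall from \Cref{subsec:EEseries} that $\EE(P;z)(\id) = \Ehr(P^{\id};z) = \Ehr(P;z)$, and since $\rho(\id)$ is the identity on a rank-$(d+1)$ lattice we have $\det(I - z\cdot\rho(\id)) = (1-z)^{d+1}$. Comparing with the ordinary Ehrhart series $\Ehr(P;z) = \hstar(P;z)/(1-z)^{d+1}$, we obtain the identity
\[
\Hstar(P;z)(\id) \;=\; \hstar(P;z),
\]
which is a polynomial in $z$ of degree at most $d$.

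Next, write $\Hstar(P;z) = \sum_{i \geq 0} \Hstar_i\, z^i$ with each $\Hstar_i \in R(G)$. The effectiveness hypothesis means that each $\Hstar_i$ lies in $R^+(G)$, i.e., is the character of a genuine (not merely virtual) representation of $G$. For any such character, the value at the identity equals the dimension of the underlying representation, hence $\Hstar_i(\id) \in \Z_{\geq 0}$, with equality $\Hstar_i(\id) = 0$ if and only if the representation is zero-dimensional, i.e., $\Hstar_i$ is the zero class function.

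Combining these two observations, $\hstar(P;z) = \sum_{i \geq 0} \Hstar_i(\id)\, z^i$ is a polynomial of degree at most $d$, which forces $\Hstar_i(\id) = 0$ for every $i > d$. By effectiveness this forces $\Hstar_i = 0$ for every $i > d$, so $\Hstar(P;z)$ is a polynomial of degree at most $d$.

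There is no real obstacle; the only place to be careful is the identification $\Hstar(P;z)(\id) = \hstar(P;z)$, which requires matching the two rational expressions $\tilde{\Hstartil}(P;z)/(1-z^N)^{d+1}$ and $\Hstar(P;z)/\det(I-z\cdot\rho)$ at $g = \id$ and using that $\rho$ fixes the last $\Z$-coordinate so that $\det(I-z\cdot\rho(\id)) = (1-z)^{d+1}$. After that, the argument is just: effective plus zero at identity implies identically zero.
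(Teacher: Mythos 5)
Your proposal is correct and follows essentially the same route as the paper's own proof: evaluate at the identity to get $\Hstar(P;z)(\id) = \hstar(P;z)$, then use that an effective character vanishes at the identity only if it is zero. Your phrasing is if anything slightly more explicit than the paper's ("no cancellation can occur"), but the underlying argument is identical.
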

    \begin{proof}
    The $\Hstar$-series is a priori an infinite formal sum:
    $\Hstar(P;z) = \sum_{i \geq 0} \Hstari z^i$.
    On the level of series, we have:
    $$
     \frac{\Hstar(P;z)(\id_G)}{\det(I - z \cdot \rho(\id_G) )} = \EE(P;z)(\id_G) = \Ehr(P;z) = \frac{h^*(P;z)}{(1-z)^{d+1}}.
    $$
    As $\det (I - z \cdot \rho(\id_G)) = (1-z)^{d+1}$, 
    $\Hstar(P;z)(\id_G) = h^*(P;z)$.
    Let $\{\chi_1 , \dots, \chi_k \}$ be the irreducible characters of $G$. 
    Since $\Hstar(P;z)$ is effective, $\Hstari = \sum_{j = 1}^{k } c_{i,j}\chi_{j}$, with $c_{i,j} \in \Z_{\geq 0}$ for all $i,j$. As $\chi_j(\id_G) > 0$ for all $j$, no cancellation can occur and $\Hstar(P;z)$ must be polynomial.
    \end{proof}

    \subsection{Nondegenerate Hypersurfaces}\label{subsec:nondegenerate}
    
    One motivation for considering the $\Hstar$-series is its connection to toric geometry, see \cite[Sections 7 and 8]{Stapledon}. 
    A lattice polytope $P \subset \R^d$ defines a projective toric variety $X_P$. 
    A hypersurface in $X_P$ is given by the vanishing set of $f = \sum_{\bv \in P \cap \Z^d} c_\bv x ^\bv $, where $c_\bv \in \C$ and $x^\bv$ denotes the monomial $x_1^{v_1}x_2^{v_2}\cdots x_n^{v_n}$.
    The Newton polytope of $f$ is the convex hull of the lattice points corresponding to monomials of $f$ that have nonzero coefficients $c_\bv$.
    A hypersurface has Newton polytope $P$ if $c_\bv \neq 0$ for all vertices $\bv$ of $P$.
    Suppose that a lattice polytope $P$ is invariant under the linear action of a group $G$.
    Then a hypersurface in $X_P$ is said to be \emph{$G$-invariant} if $c_\bv = c_\bw$ for all lattice points $\bv,\bw$ in the same $G$-orbit of $P\cap\Z^d$.
    The hypersurface is \emph{smooth} if the gradient vector $(\frac{\partial f}{\partial x_1}, \dots, \frac{\partial f}{\partial x_n})$ is never zero when $x_1, \dots, x_n \in \C^*$.
    The hypersurface is \emph{nondegenerate} if~$f |_{Q} = \sum_{\bv \in Q\cap \Z^d} c_\bv x^\bv $ is smooth for all faces $Q$ of $P$.
    

\begin{theorem}[{\cite[Theorem 7.7]{Stapledon}}]
If there exists a $G$-invariant nondegenerate hypersurface with Newton polytope $P$, then $\Hstar (P;z)$ is effective. In particular, this assumption holds if the linear system $\Gamma(X_P,L)^G$ of $G$-invariant global sections on the corresponding line bundle $L$ is base point free.
\end{theorem}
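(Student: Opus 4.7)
The plan is to reduce the theorem to a cohomological statement by interpreting the $\Hstar$-series as an equivariant analogue of the $E$-polynomial of the complement of the hypersurface in the open torus of $X_P$. Concretely, let $T \subset X_P$ denote the dense algebraic torus and let $Y \subset X_P$ be the $G$-invariant nondegenerate hypersurface with Newton polytope $P$. I would work with $U := T \setminus (Y \cap T)$, which inherits a $G$-action, and study the compactly supported cohomology $H^*_c(U;\C)$ with its mixed Hodge structure. The Danilov--Khovanskii machinery expresses the Hodge--Deligne polynomial of $U$ in terms of the $\hstar$-polynomial of $P$; our goal is to upgrade this to an equivariant identity whose right-hand side is precisely $\Hstar(P;z)$ evaluated on the action of $G$.

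The key steps would proceed as follows. First, one establishes an \emph{equivariant Danilov--Khovanskii formula}: because $Y$ is nondegenerate, the weight filtration on $H^*_c(U;\C)$ is controlled by the combinatorics of $P$, and taking the $G$-equivariant refinement yields an identity in $R(G)[z]$ whose coefficients are the virtual characters on graded pieces of $H^*_c(U)$. Second, one matches this equivariant generating function with $\Hstar(P;z)$ defined via $\EE(P;z) \cdot \det(I - z\rho)$; this is done by checking the identity after evaluation at each $g \in G$ (where it reduces to the ordinary Danilov--Khovanskii theorem applied to the fixed subpolytope $P^g$ and uses \Cref{thm:eerational}, noting that the quasi-polynomial behavior on the Ehrhart side is absorbed by the denominator $\det(I - z\rho)$). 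Third, because the graded pieces of $H^*_c(U;\C)$ are genuine (not virtual) $G$-representations, their characters are effective, and one concludes that the coefficients of $\Hstar(P;z)$ are effective characters.

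For the ``in particular'' clause, the plan is an equivariant Bertini-type argument. If $\Gamma(X_P, L)^G$ is base point free, then the $G$-invariant linear system defines a $G$-equivariant morphism to projective space, and standard Bertini reasoning applied face by face to $P$ shows that a generic $G$-invariant section has smooth restriction $f|_Q$ to every face $Q$, hence cuts out a $G$-invariant nondegenerate hypersurface with Newton polytope $P$. One must verify that base point freeness is preserved upon restriction to each torus orbit closure, which follows because restriction is a surjection of $G$-representations onto the analogous $G$-invariant sections on the orbit closure.

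The principal obstacle is the first step: establishing the equivariant Danilov--Khovanskii formula rigorously. This requires tracking the $G$-action through the spectral sequence computing $H^*_c(U)$, identifying the relevant graded pieces with explicit representations built from the combinatorics of $P$, and verifying that these identifications respect the $G$-action compatibly with restriction to fixed subpolytopes. Once this equivariant identity is in place, effectiveness follows formally, and the ``in particular'' statement is a clean application of $G$-equivariant Bertini.
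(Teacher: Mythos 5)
The paper does not prove this theorem --- it is a verbatim citation of Stapledon's result, so there is no proof in this paper to compare against. As a reconstruction of Stapledon's actual argument, your outline is heading in the right direction (equivariant toric Hodge theory of a nondegenerate hypersurface, with the ``in particular'' clause following from an equivariant Bertini argument), but there is a genuine gap in the middle step as you have written it. You state that the equivariant Danilov--Khovanskii identity expresses the coefficients of the generating function as ``virtual characters on graded pieces of $H^*_c(U)$,'' and then conclude effectiveness from the fact that the individual graded pieces carry genuine $G$-representations. These two claims are in tension: if each $\Hstari$ were an alternating sum of characters of graded pieces (as ``virtual'' suggests, and as an uncontrolled $E$-polynomial calculation would produce), effectiveness would not follow from the positivity of the summands. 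The crucial content of Danilov--Khovanskii for nondegenerate hypersurfaces is precisely that the alternating sum collapses --- the relevant mixed Hodge structure is concentrated so that each $\Hstari$ is realized as the character of a single graded piece (a genuine $G$-representation). That concentration, not mere positivity of graded pieces, is what must be established equivariantly, and it is the step your sketch leaves implicit.

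A related, smaller concern: Danilov--Khovanskii, Batyrev, and Stapledon work with the affine hypersurface $Y^\circ = Y \cap T$ rather than the complement $U = T \setminus Y^\circ$. The two are related by a Gysin/residue exact sequence, but the clean identification of $\hstar$-coefficients with Hodge numbers of a specified weight and degree is stated for $Y^\circ$; passing to $U$ adds the full cohomology of $T$, and removing that contribution re-introduces exactly the kind of subtraction you need to avoid when arguing effectiveness. Your check at each $g\in G$ against the ordinary Danilov--Khovanskii theorem for $P^g$, and the equivariant Bertini argument for the ``in particular'' clause, are both sound. But the proof cannot close until the concentration statement is proved for the equivariant mixed Hodge structure on $Y^\circ$.
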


Recall that a subspace $W \subseteq \Gamma (X,L)$ is \emph{basepoint free} if for every $p \in X$, there exists $s \in W$ with $s(p) \neq 0$ \cite[Section 6.0]{cox2011toric}. For a face $Q \subset P$ let $G_Q$ denote the stabilizer of $Q$. By \cite[Remark 7.8]{Stapledon}, the linear system of $G$-invariant global sections $\Gamma(X_P,L)^G$ on $X_P$ is basepoint free if and only if for each face $Q \subset P$, the linear system
\begin{equation}\label{eq:linearsystem}
    \left \{ \sum_{\bu \in Q \cap M} c_\bu x^ \bu \quad \bigg \lvert \quad c_\bu = c_{\bu '} \text{ if $\bu,\bu '$  lie in the same $G_Q$-orbit }    \right \} 
\end{equation}
on the torus $T$ is basepoint free.
Additionally, this condition is automatically satisfied for faces of dimension $\le 1$.

Suppose that every face $Q$ of $P$ with $\dim (Q) >1$ contains a lattice point $\bu_Q$ that is $G_Q$-fixed. 
To show that the linear system $\eqref{eq:linearsystem}$ is basepoint free on $T$, we need to show that for every $x \in T$, there exists a polynomial in the system that doesn't vanish at $x$. 
In fact, we have something stronger, since the polynomial $x^{\bu _Q}$ is an element of the linear system and doesn't vanish at \emph{any} $x \in T$. 
Thus we recover the following combinatorial criterion that certifies the existence of a $G$-invariant nondegenerate hypersurface with Newton polytope $P$. 

\begin{corollary}[{\cite[Corollary 7.10]{Stapledon}}, reworded]\label{cor:seventen}
If every face $Q$ of $P$ with $\dim(Q) >1 $ contains a lattice point that is $G_Q$-fixed, where $G_Q$ denotes the stabilizer of $Q$, then the toric variety of $P$ admits a $G$-invariant nondegenerate hypersurface with Newton polytope $P$. 
\end{corollary}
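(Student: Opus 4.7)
The plan is to invoke the basepoint-free criterion described immediately before the corollary, namely the characterization of when $\Gamma(X_P,L)^G$ is basepoint free via the linear systems \eqref{eq:linearsystem} attached to each face $Q \subset P$. Once basepoint freeness is established, the existence of a $G$-invariant nondegenerate hypersurface with Newton polytope $P$ follows from \cite[Theorem 7.7]{Stapledon}. So the entire task reduces to checking, face by face, that the linear system on the torus $T$ is basepoint free.

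First, I would dispatch the easy cases. For faces $Q$ with $\dim(Q) \le 1$, the text explicitly notes that the basepoint-free condition holds automatically, so no verification is required.

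Next, fix a face $Q$ with $\dim(Q) > 1$, and let $\bu_Q \in Q \cap M$ be a lattice point fixed by $G_Q$, which exists by hypothesis. Since the $G_Q$-orbit of $\bu_Q$ is a singleton, the coefficient relations $c_\bu = c_{\bu'}$ in \eqref{eq:linearsystem} place no constraint coupling $c_{\bu_Q}$ to any other coefficient, so the monomial $f_Q(x) = x^{\bu_Q}$ (with $c_{\bu_Q} = 1$ and all other coefficients $0$) is a legitimate element of the linear system associated with $Q$. Because $f_Q$ is a single monomial, it does not vanish at any point of the torus $T = (\C^\ast)^n$; in particular, for every $x \in T$, there is an element of the linear system with $f_Q(x) \neq 0$. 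Thus the linear system on $T$ indexed by $Q$ is basepoint free.

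Combining these observations across all faces, $\Gamma(X_P, L)^G$ is basepoint free, and applying \cite[Theorem 7.7]{Stapledon} yields a $G$-invariant nondegenerate hypersurface with Newton polytope $P$. There is no real obstacle here, since the preceding discussion has already isolated the key mechanism; the only mild subtlety is to be careful that the monomial $x^{\bu_Q}$ genuinely satisfies the coefficient-equality constraints of \eqref{eq:linearsystem}, which is where $G_Q$-invariance of $\bu_Q$ (rather than merely $G$-invariance) is essential, since the constraints are imposed orbit-by-orbit under the stabilizer $G_Q$ rather than under the full group $G$.
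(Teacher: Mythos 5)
Your proposal is correct and follows essentially the same argument the paper itself gives in the paragraph immediately preceding the corollary: reduce to basepoint freeness of $\Gamma(X_P,L)^G$ via the face-by-face criterion from \cite[Remark 7.8]{Stapledon}, dispatch faces of dimension $\le 1$ automatically, and for $\dim(Q)>1$ observe that the monomial $x^{\bu_Q}$ lies in the linear system \eqref{eq:linearsystem} and is nowhere vanishing on $T$. You spell out a bit more carefully than the paper why $x^{\bu_Q}$ is genuinely admissible (the $G_Q$-orbit of $\bu_Q$ is a singleton, so the coefficient constraints do not couple $c_{\bu_Q}$ to anything), which is a worthwhile detail, but the route is the same.
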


    \subsection{Equivariant Ehrhart Theory of Restricted Representations}\label{sec:restricted_reps}

    Let $P$ be a polytope invariant under the action of $G$ as in the Setup \ref{setup}, and let $\mathcal H$ be a subgroup of $G$. The action of $G$ on $P$ induces an action of $\mathcal H$ on $P$ by restriction. Let $\Hstar_G(P;z)$ and $\Hstar_{\mathcal H}(P;z)$ be their respective equivariant $\Hstar$-series.
    
    \begin{theorem}\label{thm:subrephypersurface}
    If the toric variety of $P$ admits a $G$-invariant nondegenerate hypersurface with Newton polytope $P$, then the same hypersurface is a nondegenerate $\mathcal H$-invariant hypersurface.
    \end{theorem}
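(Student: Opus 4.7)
The plan is to observe that this theorem is essentially immediate from unpacking the definitions of $G$-invariance and nondegeneracy, and that the content really lies in a compatibility between orbits under $G$ and its subgroup $\mathcal{H}$.

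First I would recall the definition from \Cref{subsec:nondegenerate}: a hypersurface in $X_P$ cut out by $f = \sum_{\bv \in P \cap \Z^d} c_\bv x^\bv$ is $G$-invariant when $c_\bv = c_\bw$ for all lattice points $\bv, \bw$ lying in the same $G$-orbit of $P \cap \Z^d$. The key observation is that since $\mathcal{H} \subseteq G$, every $\mathcal{H}$-orbit is contained in a single $G$-orbit, so that the $\mathcal{H}$-orbits refine the $G$-orbits. Therefore the condition $c_\bv = c_\bw$ across a common $G$-orbit automatically entails the same equality across a common $\mathcal{H}$-orbit, and $G$-invariance of the hypersurface implies $\mathcal{H}$-invariance.

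Next I would note that nondegeneracy is intrinsic to the hypersurface and does not reference the group at all: the condition is that for every face $Q$ of $P$, the restriction $f|_Q = \sum_{\bv \in Q \cap \Z^d} c_\bv x^\bv$ is smooth on the torus, i.e., has nowhere vanishing gradient on $(\C^*)^n$. Since $f$ itself is unchanged when we regard the action as coming from $\mathcal{H}$ rather than $G$, nondegeneracy transfers directly. Finally, the Newton polytope of $f$ is likewise an intrinsic property of the polynomial and equals $P$ by hypothesis.

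There is no real obstacle here; the only subtlety is to be explicit that $\mathcal{H}$-orbits refine $G$-orbits so that the defining coefficient equalities are preserved. The statement may then be summarized: the same defining polynomial $f$ witnesses the existence of an $\mathcal{H}$-invariant nondegenerate hypersurface with Newton polytope $P$.
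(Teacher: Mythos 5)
Your proof is correct and follows the same approach as the paper: observe that $\mathcal{H}$-orbits are contained in $G$-orbits so the coefficient equalities persist, and note that nondegeneracy (and the Newton polytope) are intrinsic to the polynomial and independent of the group. Your write-up is slightly more explicit about the group-independence of the Newton polytope, but the substance is identical.
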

    \begin{proof}
    Let $\sum_{\mathbf v \in P \cap \Z^d} c_\bv x^\bv = 0$ be a $G$-invariant nondegenerate hypersurface. Every $\mathcal H$-orbit of lattice points in $P$ is contained in a $G$-orbit of lattice points. Thus $c_\bv = c_\bw$ for all $\mathbf v, \mathbf w$ in an orbit of $\mathcal H$, and the hypersurface is $\mathcal H$-invariant. The non-degeneracy condition is independent of the group.
    \end{proof}
        
    \begin{theorem}
    If $\Hstar_G(P;z)$ is effective then $\Hstar_{\mathcal H}(P;z)$ is effective.
    \end{theorem}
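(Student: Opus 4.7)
The plan is to show that the $\Hstar$-series behaves well under restriction of the group, namely that $\Hstar_{\mathcal H}(P;z)$ is obtained by restricting each coefficient of $\Hstar_G(P;z)$ from a class function on $G$ to a class function on $\mathcal H$. Once this is established, effectiveness transfers immediately: if each coefficient of $\Hstar_G(P;z)$ is the character of a genuine $G$-representation, then restricting that representation to $\mathcal H$ yields a genuine $\mathcal H$-representation, whose character is precisely the restricted class function.

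First I would verify that the equivariant Ehrhart series satisfy $\EE_{\mathcal H}(P;z) = \operatorname{Res}^G_{\mathcal H} \EE_G(P;z)$. For any $h \in \mathcal H \subseteq G$, the fixed subpolytope $P^h$ does not depend on whether we regard $h$ as an element of $\mathcal H$ or of $G$. Using \cite[Lemma 5.2]{Stapledon}, both $\EE_G(P;z)(h)$ and $\EE_{\mathcal H}(P;z)(h)$ equal $\Ehr(P^h;z)$, so the equality holds coefficientwise as class functions on $\mathcal H$.

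Next, I would combine this with the rational expression
\[
\EE_G(P;z) \;=\; \frac{\Hstar_G(P;z)}{\det(I - z \cdot \rho_G)},
\]
where $\rho_G$ is the representation of $G$ on $M'_\R$. The representation of $\mathcal H$ used to define $\Hstar_{\mathcal H}(P;z)$ is simply the restriction $\rho_{\mathcal H} = \operatorname{Res}^G_{\mathcal H} \rho_G$, so for every $h \in \mathcal H$ one has $\det(I - z \cdot \rho_G(h)) = \det(I - z \cdot \rho_{\mathcal H}(h))$. Evaluating the rational expression at $h$ and solving for the numerator gives
\[
\Hstar_{\mathcal H}(P;z)(h) \;=\; \EE_{\mathcal H}(P;z)(h)\cdot\det(I - z\cdot\rho_{\mathcal H}(h)) \;=\; \EE_G(P;z)(h)\cdot\det(I - z\cdot\rho_G(h)) \;=\; \Hstar_G(P;z)(h).
\]
Since this holds for every $h \in \mathcal H$, the coefficient of each $z^i$ in $\Hstar_{\mathcal H}(P;z)$ is the restriction to $\mathcal H$ of the corresponding coefficient of $\Hstar_G(P;z)$.

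Finally, assume $\Hstar_G(P;z)$ is effective, so each coefficient $\Hstari_G$ is the character of some $G$-module $V_i$. Then $\operatorname{Res}^G_{\mathcal H} V_i$ is a genuine $\mathcal H$-module whose character is $\operatorname{Res}^G_{\mathcal H}\Hstari_G = \Hstari_{\mathcal H}$, so the latter lies in $R^+(\mathcal H)$. This proves effectiveness of $\Hstar_{\mathcal H}(P;z)$. The only mild obstacle is the bookkeeping in the second step, ensuring that the identification of denominators under restriction is compatible with how $\Hstar$ is defined; once that is straightened out, the argument is formal.
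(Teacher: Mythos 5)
Your proof is correct and takes essentially the same approach as the paper's: both reduce to the fact that restricting an effective $G$-character to $\mathcal H$ yields an effective $\mathcal H$-character. You additionally spell out the compatibility step $\Hstari_{\mathcal H} = \operatorname{Res}^G_{\mathcal H}\Hstari_G$ by comparing denominators, which the paper treats as implicit; this is a worthwhile clarification but not a different route.
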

    \begin{proof}
    The coefficient of each $z^i$ in $\Hstar_G(P;z)$ is an effective character.
    Thus for each coefficient there is a corresponding representation of $G$  on a vector space which is a direct sum of irreducible representations of $G$. Restricting to the action of $\mathcal H$ decomposes each summand further into a finite number of irreducible representations of $\mathcal H$.
    \end{proof}
    
    \begin{theorem}
    If $\Hstar_G(P;z)$ is polynomial, then $\Hstar_{\mathcal H}(P;z)$ is polynomial.
    \end{theorem}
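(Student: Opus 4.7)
The plan is to show that $\Hstar_{\mathcal H}(P;z)$ is obtained from $\Hstar_G(P;z)$ simply by restricting each coefficient, which is a virtual character of $G$, to the subgroup $\mathcal H$. Once this identification is established, the conclusion is immediate: a polynomial in $R(G)[z]$ has only finitely many nonzero coefficients, and restricting those coefficients produces a polynomial in $R(\mathcal H)[z]$.

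To carry this out, first I would invoke the standard fact that the restriction map $\mathrm{Res}^G_{\mathcal H}\colon R(G)\to R(\mathcal H)$, sending a virtual character $\chi$ to the class function $\chi|_{\mathcal H}$, is a ring homomorphism, and hence extends coefficient-wise to a homomorphism $R(G)[[z]]\to R(\mathcal H)[[z]]$ that respects formal power series multiplication. Next, from the definition of the equivariant Ehrhart series I would observe that $\mathrm{Res}^G_{\mathcal H}\bigl(\EE_G(P;z)\bigr)=\EE_{\mathcal H}(P;z)$, since for every $t\geq 0$ the character $\chi_{tP}$ of the permutation representation of $G$ on $tP\cap M'$ restricts, as a class function, to the character of the permutation representation of $\mathcal H$ on the same set of lattice points.

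The remaining ingredient is that $\mathrm{Res}^G_{\mathcal H}\bigl(\det(I-z\cdot\rho)\bigr)=\det(I-z\cdot\rho|_{\mathcal H})$. This holds because the coefficient of $z^k$ in $\det(I-z\cdot\rho)$ is, up to sign, the character of the exterior power $\Lambda^k M'_{\R}$ as a $G$-representation, whose restriction to $\mathcal H$ is the character of $\Lambda^k M'_{\R}$ as an $\mathcal H$-representation, i.e.\ the coefficient of $z^k$ in $\det(I-z\cdot\rho|_{\mathcal H})$. Applying $\mathrm{Res}^G_{\mathcal H}$ to the defining identity
$$\Hstar_G(P;z) \;=\; \EE_G(P;z)\cdot\det(I-z\cdot\rho)$$
then yields $\mathrm{Res}^G_{\mathcal H}\bigl(\Hstar_G(P;z)\bigr)=\EE_{\mathcal H}(P;z)\cdot\det(I-z\cdot\rho|_{\mathcal H})=\Hstar_{\mathcal H}(P;z)$, and the polynomiality conclusion follows.

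There is no real obstacle: the argument is a formal manipulation once one recognizes that the $\Hstar$-series is defined from the same equivariant Ehrhart data for $G$ and for $\mathcal H$, and that both the equivariant Ehrhart series and the denominator $\det(I-z\cdot\rho)$ behave functorially under restriction of characters. The only point that requires a moment of care is verifying the behavior of the denominator under restriction, which as noted reduces to the observation that exterior powers commute with restriction.
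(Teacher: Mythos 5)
Your proof is correct and is essentially the same approach the paper uses: both rest on the identity $\Hstar_{\mathcal H}(P;z)=\mathrm{Res}^G_{\mathcal H}\bigl(\Hstar_G(P;z)\bigr)$, after which polynomiality is immediate. The paper applies this implicitly, writing the coefficient of each irreducible $\mu_k$ of $\mathcal H$ as $\sum_j\bigl(\sum_i c_{i,j}z^i\bigr)\langle \chi_j|_{\mathcal H},\mu_k\rangle_{\mathcal H}$ without justifying why the restriction of $\Hstar_G$ recovers $\Hstar_{\mathcal H}$, whereas you verify the needed functoriality explicitly (restriction is a ring homomorphism, $\EE_{\mathcal H}=\mathrm{Res}\,\EE_G$, and the denominator $\det(I-z\rho)$ restricts correctly because exterior powers commute with restriction). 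So your write-up is a slightly more careful version of the same argument, not a different route.
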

    \begin{proof}
    Let $\{\chi_1, \dots, \chi _m\}$ be the irreducible representations of $G$, and let $\{\mu_1, \dots, \mu_k\}$ be the irreducible representations of $\mathcal H$. 
    Suppose $\Hstar_G(P;z)$ is polynomial so that 
    $$
    \Hstar_G(P;z) = \sum_{i=0}^{d} \left (\sum_{j=1}^{m} c_{i,j}\chi_j \right)z^i = \sum_{j=1}^{m}  \left ( \sum_{i=0}^{d} c_{i,j}z^i\right)\chi_j, 
    $$
    where $c_{i,j} \in \Z$ for all $i \in \{ 0 , \dots, d\}$ and $j \in [m]$.
    Then the coefficient of $\mu_k$ in $\Hstar_{\mathcal H}(P;z)$ is 
    $$
    \sum_{j=0}^{m}(c_{0,j}z^0 + \dots + c_{d,j}z^d)\langle \chi_i |_{\mathcal H} ,  \mu_k \rangle_{\mathcal H},
    $$
    where $\langle \Phi ,\Psi \rangle _{\mathcal H} \in \mathbb C$ denotes the inner product between characters $\Phi, \Psi$ of $\mathcal H$, and $\chi_i |_{\mathcal H}$ is the representation restricted to $\mathcal H$.
    \end{proof}
        
    \begin{example}
    The cyclic group $\Z/n\Z \subseteq S_n$ acting on the permutahedron $\Pi_n$ gives an interesting example of the failure of the other direction. 
    Namely, $\Hstar_{S_n}(\Pi_n;z)$ is non-polynomial for $n \geq 4$ \cite{ASV}, but $\Hstar_{\Z/n\Z}(\Pi_n;z)$ is polynomial for all $n$ (see \Cref{lem:cyclicactionpipolynomial}).
    Here we show how the rational function collapses to a polynomial for the case $\mathcal H = \Z/4\Z \subseteq S_4$:
    We implemented functionality in \texttt{Sagemath} for the calculation of the $\Hstar$-series, and we use it to calculate $\Hstar(\Pi_4, ;z)$ below, with characters given in the following tables. See \Cref{appendix:sage} for details.  
    \begin{align*}
     \Hstar_{S_4}(\Pi_4;z)  = & 
    \frac{(\chi_2 + \chi_3 + \chi_4)z^4}{z + 1} + \frac {(\chi_0 + 5\chi_1 + 6\chi_2 + 9\chi_3 + 6\chi_4)z^3}{z + 1} + \frac{(\chi_0 + 7\chi_1 + 8\chi_2 + 14\chi_3 + 9\chi_4)z^2}{z + 1} \\
    & + \frac{(\chi_0 + 3\chi_1 + 3\chi_2 + 5\chi_3 + 4\chi_4)z}{z + 1} + \frac{\chi_4}{z + 1} 
    \end{align*}        
    $$
    \resizebox{\linewidth}{!}{
    \begin{tabular}{c | c c c c c}
    $S_4$ & () & (12) & (12)(34) & (123) & (1234) \\
    \hline \\
     $\chi_0$ & 1 &-1 & 1 & 1 &-1 \\
     $\chi_1$ & 3 &-1 &-1 & 0 & 1 \\
     $\chi_2$ & 2 & 0 & 2 &-1 & 0 \\
     $\chi_3$ & 3 & 1 &-1 & 0 &-1 \\
     $\chi_4$ & 1 & 1 & 1 & 1 & 1 \\
    \end{tabular}
    \,\,\,\,
    \begin{tabular}{c | c c c c c}
    $\mathcal H$   & () & (1234) & (12)(34) & (1432) \\
    \hline \\
     $\mu_0$   & 1  & 1      & 1        & 1      \\
     $\mu_1$   & 1  & -1     & 1        & -1     \\
     $\mu_2$   & 1  & i      & -1       & -i     \\
     $\mu_3$   & 1  & -i     & -1       & i      \\
     & & & & \\
    \end{tabular}
    }$$
    We compute $\langle \chi_i |_{\mathcal H} , \mu _j \rangle _{\mathcal H}$ for all pairs $i,j$ in order to 
    restrict $\Hstar_{S_4}(\Pi_4;z)$ to $\Hstar_{\mathcal H}(\Pi_4;z)$. A factor of $1+z$ appears in the numerator:
    
    \begin{align*}
    \Hstar_{\mathcal H}(\Pi_4;z) = & \frac{(2\mu_0+2\mu_1+\mu_2+\mu_3)z^4+(17\mu_0+16\mu_1+14\mu_2+14\mu_3)z^3 + (24\mu_0+23\mu_1+21\mu_2+21\mu_3)z^2}{1+z}\\
    & +\frac{(10\mu_0+9\mu_1+8\mu_2+8\mu_3)z+\mu_0}{1+z} \\
    = & 
     (2 \mu_{0} + 2 \mu_{1} + \mu_{2} + \mu_{3}) z^{3} + (15 \mu_{0} + 14 \mu_{1} + 13 \mu_{2} + 13 \mu_{3}) z^{2} + (9 \mu_{0} + 9 \mu_{1} + 8 \mu_{2} + 8 \mu_{3}) z + \mu_{0}.
    \end{align*}
    
     \end{example}

\section{Techniques}\label{sec:techniques}

In this section we introduce our main results, exploring four approaches toward equivariant Ehrhart theory and applying these methods to a variety of examples.

\subsection{Zonotopal Decompositions}\label{sec:zonotopes} 
    
    Zonotopes are a family of polytopes which can be decomposed into parallelotopes, and from these decompositions their Ehrhart polynomials are easily computed.
    This approach also lends itself nicely to equivariant Ehrhart theory, and has previously been used to study the equivariant Ehrhart theory of the permutahedron under the action of the symmetric group \cite{ArdilaSchindlerVindas,ASV}.
    Here we adapt this technique to all zonotopes of the Type A root system, also known as graphic zonotopes.
    We expect this to generalize further to other root systems (see \cite{ArdilaBeckMcWhirter} for progress in this direction), as well as to general families of zonotopes.
    We begin with a brief overview of zonotopes; for a more in-depth introduction to the subject, see \cite[Chapter 9]{beck_robins}.
    
    Let $S$ be a finite set of vectors.
    The \emph{zonotope} of $S$ is denoted $Z(S)$ and is the Minkowski sum of the line segments connecting $\bzero$ to $\bs$ for each $\bs\in S$, or any translation of this polytope.
    It was shown by Shephard that any lattice zonotope $Z(S)$ can be partitioned into half-open parallelotopes that are in bijection with the linearly independent subsets of $S$ \cite[Theorem 54]{Shephard}.
    For a linearly independent subset $T\subseteq S$, the corresponding half-open parallelotope $\hobox{T}$ is, up to translation, the zonotope $Z(T)$ where each line segment in the Minkowski sum is open at the endpoint $\bzero$.
    Half-open lattice parallelotopes have two nice properties: their volumes can be easily computed from matrix minors, and the number of lattice points they contain is exactly equal to their volume.
    Stanley used this to give a formula for the Ehrhart polynomial of a zonotope.
    
    \begin{theorem}[{\cite[Theorem~2.2]{Stanleyzonotope}}]\label{thm:zonotope_ehrhart}
    The Ehrhart polynomial of the zonotope $Z(S)$ is
        \[
        \L\big(Z(S);t\big) = \sum_{\substack{T\subseteq S\\\text{lin. indep.}}} \vol{\hobox{T}}\cdot t^{|T|}.
        \]
    \end{theorem}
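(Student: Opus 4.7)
The plan is to combine Shephard's half-open decomposition with the classical identity between lattice-point count and volume for half-open lattice parallelotopes. Both ingredients are already recalled in the paragraph preceding the theorem, so the proof reduces to bookkeeping plus a scaling argument. First I would observe that $tZ(S)=Z(tS)$, and apply Shephard's theorem to the dilate to obtain a disjoint partition
\[
tZ(S) \;=\; \bigsqcup_{\substack{T\subseteq S\\\text{lin. indep.}}} t\hobox{T},
\]
where each piece $t\hobox{T}$ is, up to a lattice translation, the half-open parallelotope generated by the dilated vectors $\{t\bs:\bs\in T\}$. This partition reduces the computation of $\L(Z(S);t)$ to counting lattice points in each piece and summing.

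Next I would invoke the key fact that a half-open lattice parallelotope contains exactly as many lattice points as its relative Euclidean volume, where volume is measured inside the affine span with respect to the induced lattice. The standard argument is that integer translates of $\hobox{T}$ by the lattice $\Z T$ tile the affine span of $T$, so each coset of $\Z T$ in $\Z T + \aff(\hobox{T})$ contributes exactly one lattice point, giving the identity. Applied to $t\hobox{T}$, which is itself a half-open lattice parallelotope of dimension $|T|$, this yields
\[
\bigl|\,t\hobox{T}\cap\Z^n\,\bigr| \;=\; \vol\bigl(t\hobox{T}\bigr) \;=\; t^{|T|}\cdot\vol\bigl(\hobox{T}\bigr),
\]
the last equality because dilation by $t$ scales $|T|$-dimensional volume by $t^{|T|}$. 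Summing over all linearly independent $T\subseteq S$ yields the claimed formula, and in particular shows that $\L(Z(S);t)$ is a polynomial in $t$.

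The main obstacle is really the Shephard decomposition itself: it must be a genuine partition, not just a covering, and the pieces must be indexed bijectively by the linearly independent subsets of $S$. The standard proof fixes a generic linear functional $\ell$ on $\R^n$ and, for each vertex $\bv$ of $Z(S)$, forms the half-open parallelotope generated by those segments $[\bzero,\bs]$ along which $\ell$ decreases from $\bv$; the resulting index sets are exactly the "descent sets" of $\ell$ at each vertex, which range precisely over the linearly independent subsets of $S$, and a direct check shows the pieces tile $Z(S)$ without overlap. Once this structural input and the volume/lattice-point identity are in place, the proof reduces to the computation above.
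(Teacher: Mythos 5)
Your proof is correct and is exactly the argument the paper is pointing at: the paper itself states the result as a citation to Stanley and only recalls the two ingredients (Shephard's half-open parallelotope decomposition, and the lattice-point-count-equals-relative-volume identity for half-open lattice parallelotopes), which you then assemble in the standard way via $tZ(S)=Z(tS)$ and the dimension-$|T|$ scaling of relative volume. The one phrase worth tightening is ``each coset of $\Z T$ in $\Z T+\aff(\hobox{T})$''; what you mean is each coset of $\Z T$ inside the induced lattice $\Z^n\cap\operatorname{span}_\R(T)$, and the number of such cosets is the index, which equals the relative volume.
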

    
    Let $\Gamma=(V,E)$ be a simple graph, where $V$ is a finite set of vertices, and $E$ is the set of undirected edges.
    We write $E\subseteq\binom{V}{2}$ where $\binom{V}{2}$ is the collection of all $2$-element subsets of $V$, and we denote by $\R V$ a real vector space with a basis $\{\bev:v\in V\}$ indexed by the elements of $V$.
    The \emph{graphic zonotope} $Z_\Gamma$ is given by taking the Minkowski sum of line segments in $\R V$
        \[
        Z_\Gamma:= \sum_{\{u,v\}\in E} [\beu,\bev].
        \]
    The dimension of $Z_\Gamma$ is $|V|$ minus the number of connected components of $\Gamma$.
    
    \begin{example}[Path graph]\label{ex:path graph}
    When $V=[n]$ and $E=\{\{1,2\},\{2,3\},\dots,\{n-1,n\}\}$, we call the graph $(V,E)$ the \emph{path graph} or $\path_n$.
    The graphic zonotope $Z_{\path_n}$ is the $(n-1)$-dimensional parallelotope $\sum_{i=1}^{n-1}[\bei,\beii]$; \Cref{fig:path_graph_zonotope} shows $Z_{\path_4}$.
    \end{example}
    
    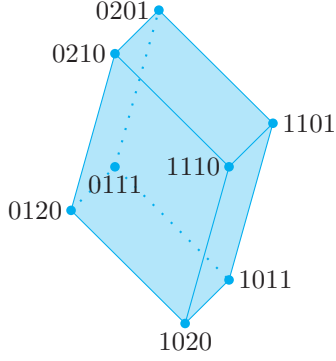
\begin{figure}
        \centering
        \begin{tikzpicture}[scale=1.5,
    back/.style={loosely dotted, thick},
	edge/.style={color=cyan},
	facet/.style={fill=cyan,fill opacity=0.300000},
	vertex/.style={fill=cyan}]

\coordinate (0111) at (0,1,1);
\coordinate[label=left:0120] (0120) at (0,1,2);
\coordinate[label=left:0201] (0201) at (0,2,0);
\coordinate[label=left:0210] (0210) at (0,2,1);
\coordinate[label=right:1011] (1011) at (1,0,1);
\coordinate[label=below:1020] (1020) at (1,0,2);
\coordinate[label=right:1101] (1101) at (1,1,0);
\coordinate (1110) at (1,1,1);

\draw[edge] (0120) -- (1020) -- (1011);
\draw[edge,back] (0120) -- (0111) -- (1011);
\draw[edge] (0201) -- (0210) -- (1110) -- (1101) -- cycle;
\draw[edge,back] (0111) -- (0201);
\draw[edge] (0120) -- (0210);
\draw[edge] (1020) -- (1110);
\draw[edge] (1011) -- (1101);

\fill[facet] (1110) -- (1020) -- (1011) -- (1101) -- cycle;
\fill[facet] (1101) -- (0201) -- (0210) -- (1110) -- cycle;
\fill[facet] (0210) -- (1110) -- (1020) -- (0120) -- cycle;

\coordinate[label=below:0111] (0111) at (0,1,1);
\coordinate[label=left:1110] (1110) at (1,1,1);

\fill[vertex] (0120) circle(1.2pt);
\fill[vertex] (1020) circle(1.2pt);
\fill[vertex] (1011) circle(1.2pt);
\fill[vertex] (0111) circle(1.2pt);
\fill[vertex] (1110) circle(1.2pt);
\fill[vertex] (1101) circle(1.2pt);
\fill[vertex] (0201) circle(1.2pt);
\fill[vertex] (0210) circle(1.2pt);

\end{tikzpicture}
        \caption{The graphic zonotope $Z_{\path_4}$ is a parallelopiped.}
        \label{fig:path_graph_zonotope}
    \end{figure}
    
    \begin{example}[Complete graph]
    The graph with $V=[n]$ and $E=\binom{[n]}{2}$ is called the \emph{complete graph}.
    The corresponding graphic zonotope is $\sum_{1\le i<j\le n}[\bei,\bej]$, also known as the \emph{permutahedron}.
    \end{example}
    
    The following characterizations of the graphic zonotope are well-known; see \cite[Section~2]{Stanley_hyperplane} and \cite[Section~13]{AguiarArdila} for more information.
    
    \begin{proposition}\label{prop:graphic zonotope desc}
    The graphic zonotope $Z_\Gamma$ has....
        \begin{itemize}
            \item ...the half-space description 
                \[Z_\Gamma = \left\{ \bx\in\R V : \sum_{v\in V} x_v = |E|, \; \forall S\subset V \, \sum_{v\in S} x_v \le \inc_\Gamma(S) \right\},\]
            where for any subset $S\subseteq V$, we define $\inc_\Gamma(S)$ to be the number of edges of $\Gamma$ that have an endpoint in $S$ (are ``incident to $S$'').
            \item ...the vertex description
                \[\conv\left\{\bv_\ao:=\sum_{v\in V} \indeg_{\mathfrak{o}}(v)\bev : \mathfrak{o} \text{ is an acyclic orientation of }\Gamma\right\},\]
            where $\indeg_{\mathfrak{o}}(v)$ is the number of edges pointing into the vertex $v$ in the acyclic orientation $\mathfrak{o}$ of $\Gamma$.
            Furthermore, $\bv_\ao$ is a vertex of $Z_\Gamma$ for all acyclic orientations $\ao$ of $\Gamma$.
        \end{itemize}
    \end{proposition}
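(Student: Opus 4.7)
The plan is to verify each description separately and then show they agree with the Minkowski sum defining $Z_\Gamma$. Letting $H$ denote the polytope cut out by the listed inequalities and $K$ the convex hull of the points $\bv_\ao$ ranging over acyclic orientations, I will establish the chain $K \subseteq Z_\Gamma \subseteq H$ together with the fact that every vertex of $Z_\Gamma$ is of the form $\bv_\ao$, so that all three polytopes coincide and the $\bv_\ao$ are exactly the vertices.

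For $Z_\Gamma \subseteq H$: any $\bx \in Z_\Gamma$ can be written $\bx = \sum_{\{u,v\}\in E}(\alpha_{uv}\beu + (1-\alpha_{uv})\bev)$ with $\alpha_{uv} \in [0,1]$. Summing all coordinates gives $|E|$, and for any $S \subseteq V$, each edge contributes $0$ (if no endpoint is in $S$), something in $[0,1]$ (if exactly one endpoint is in $S$), or $1$ (if both are in $S$) to $\sum_{v\in S}x_v$, yielding the inequality $\sum_{v\in S}x_v \le \inc_\Gamma(S)$. For $K \subseteq Z_\Gamma$: given an acyclic orientation $\ao$, pick the endpoint $\bev$ of the segment $[\beu,\bev]$ whenever the edge is oriented $u \to v$; summing these selections yields exactly $\bv_\ao = \sum_v \indeg_\ao(v)\bev$, which lies in $Z_\Gamma$. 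To show every vertex of $Z_\Gamma$ is a $\bv_\ao$ and to verify each $\bv_\ao$ is genuinely a vertex, I will invoke the standard zonotopal fact that the vertex of a Minkowski sum of segments maximizing a generic linear functional $\bw$ is obtained by choosing the $\bw$-maximal endpoint of each segment. Choosing a generic $\bw$ with distinct coordinates and orienting $\{u,v\}$ as $u \to v$ whenever $w_v > w_u$ gives an orientation that must be acyclic (a directed cycle would violate the total order on the $w_v$), with associated vertex $\bv_\ao$. Conversely, any acyclic $\ao$ admits a topological order, from which one constructs a $\bw$ uniquely optimized at $\bv_\ao$, certifying it as a vertex.

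The main obstacle is closing the loop by showing $H \subseteq Z_\Gamma$. The cleanest route is to check that the set function $\inc_\Gamma$ is submodular, so $H$ is the base polytope of a polymatroid; by the standard greedy-algorithm description of vertices of such polytopes, every vertex of $H$ arises from a linear order on $V$ and is easily identified with some $\bv_\ao$. Since $K$ is bounded and $K \subseteq Z_\Gamma \subseteq H$ with matching vertex sets, all three polytopes agree. Rather than reproduce the polymatroid argument in full, I would cite \cite{AguiarArdila} or \cite{Stanley_hyperplane}, where this description of graphic zonotopes as generalized permutahedra is treated in detail.
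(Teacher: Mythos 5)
The paper does not prove this proposition — it states it as well-known and cites \cite[Section~2]{Stanley_hyperplane} and \cite[Section~13]{AguiarArdila}. Your sketch is a correct and detailed reconstruction of the standard argument: the chain $K \subseteq Z_\Gamma \subseteq H$ is established directly, the identification of vertices with acyclic orientations via generic linear functionals is right (a directed cycle would force a strict cyclic inequality among the weights), and the submodularity of $\inc_\Gamma$ does hold (for each edge $e$, the indicator $[e\cap S\neq\emptyset]$ is submodular in $S$), so that $H$ is indeed the base polytope of a polymatroid whose greedy-vertex description recovers the $\bv_\ao$. Delegating that last piece to the cited references is entirely in line with what the paper itself does, so there is no gap here.
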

    
    The symmetric group $\Sym(V)$ acts on the set of vertices $V$, also inducing an action on $\binom{V}{2}$.
    The \emph{automorphism group} $\Aut(\Gamma)$ of $\Gamma$ is the subgroup of $\Sym(V)$ under which the edge set $E$ is invariant.
    Just as $\Aut(\Gamma)$ acts on $V$ and holds $\Gamma$ invariant, likewise $\Aut(\Gamma)$ acts linearly on $\R V$ and holds $Z_\Gamma$ invariant.
    Hence $\R V$ is a representation of $\Aut(\Gamma)$, and we can consider the equivariant Ehrhart theory of $Z_\Gamma$ under the action of $\Aut(\Gamma)$ (or any subgroup thereof).
    
    For the setup, we will view $\Aut(\Gamma)$ as a subgroup of $\Sym(V)$, so every element $\sigma\in\Aut(\Gamma)$ can be written as a product of $m\le |V|$ disjoint cycles $\sigma_1,\dots,\sigma_m$ partitioning $V$.
    We denote the cycle lengths by $\ell_1,\dots,\ell_m$, respectively.
    For each $\sigma\in\Aut(\Gamma)$, we define the \emph{$\sigma$-connectivity graph} $C_\Gamma(\sigma)$ to be the simple graph with vertex set $\{\sigma_1,\dots,\sigma_m\}$ and an edge between two distinct cycles $\sigma_i$ and $\sigma_j$ whenever some element of the cycle $\sigma_i$ is connected by an edge in $\Gamma$ to some element of $\sigma_j$.

    \begin{example}
    Let $\Gamma=\path_4$, the path graph with vertex set $[4]$, and let $\sigma=(14)(23)$.
    Then $C_\Gamma(\sigma)$ has vertex set $\{14,23\}$ and an edge connecting $14$ to $23$, as seen in \Cref{fig:connectivity_graph}.
    \end{example}
    
    \begin{figure}[h]
        \centering
            \begin{subfigure}{.4\linewidth}
            \centering
            \begin{tikzpicture}
\newcommand\Square[1]{+(-#1,-#1) rectangle +(#1,#1)}

\coordinate[label=below:1] (1) at (0,0);
\coordinate[label=below:2] (2) at (1,0);
\coordinate[label=below:3] (3) at (2,0);
\coordinate[label=below:4] (4) at (3,0);
\draw (1) -- (4);

\fill[cyan] (1) circle(2pt);
\fill[magenta] (2) \Square{2pt};
\fill[magenta] (3) \Square{2pt};
\fill[cyan] (4) circle(2pt);

\end{tikzpicture}
            \caption{}
            \label{fig:path4}
            \end{subfigure}
            \hspace{1mm}
            \begin{subfigure}{.4\linewidth}
            \centering
            \begin{tikzpicture}
\newcommand\Square[1]{+(-#1,-#1) rectangle +(#1,#1)}

\coordinate[label=below:(14)] (14) at (0,0);
\coordinate[label=below:(23)] (23) at (1,0);
\draw (14) -- (23);

\fill[cyan] (14) circle(2pt);
\fill[magenta] (23) \Square{2pt};

\end{tikzpicture}
            \caption{}
            \label{fig:pathconnectivity}
            \end{subfigure}
        \caption{The orbits of the vertices of $\path_4$ under the automorphism $(14)(23)$ (\ref{fig:path4}), and the $(14)(23)$-connectivity graph of $\path_4$ (\ref{fig:pathconnectivity}).}
        \label{fig:connectivity_graph}
    \end{figure}
    
    \begin{lemma}\label{prop:degree}
    Let $\sigma\in \Aut(\Gamma)$ and let $\sigma_i,\sigma_j$ be (not necessarily distinct) cycles of $\sigma$.
    Then for all $v\in\sigma_i$, the number of edges connecting $v$ to the elements of $\sigma_j$ is constant.
    \end{lemma}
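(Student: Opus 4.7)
The plan is to exploit the fact that $\sigma$ acts as a graph automorphism, and therefore its powers do as well. Since $\sigma_i$ is a cycle of $\sigma$, the cyclic subgroup $\langle\sigma\rangle$ acts transitively on the elements of $\sigma_i$. So given any two vertices $v,v'\in\sigma_i$, there exists an integer $k$ with $\sigma^k(v)=v'$.

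Next I would observe that since $\sigma$ permutes $\sigma_j$ as a set (again because $\sigma_j$ is one of its cycles), the map $\sigma^k$ also restricts to a bijection $\sigma_j \to \sigma_j$. The key step is then to show that $\sigma^k$ induces a bijection between the neighbors of $v$ inside $\sigma_j$ and the neighbors of $v'$ inside $\sigma_j$. This follows from the fact that $\sigma^k \in \Aut(\Gamma)$ preserves $E$: if $w\in\sigma_j$ satisfies $\{v,w\}\in E$, then $\{v',\sigma^k(w)\}=\{\sigma^k(v),\sigma^k(w)\}\in E$ and $\sigma^k(w)\in\sigma_j$. The inverse map $\sigma^{-k}$ provides the bijection in the other direction.

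Thus the set of neighbors of $v$ in $\sigma_j$ has the same cardinality as the set of neighbors of $v'$ in $\sigma_j$, proving the claim. The argument goes through uniformly whether $i=j$ or $i\neq j$, since in the $i=j$ case $\sigma^k$ still permutes $\sigma_j=\sigma_i$ and the bijection argument is unchanged (the constraint $w\neq v$ is only relevant if one wanted to restrict to a simple graph count of neighbors not equal to $v$, but here we can allow $w=v$ or not uniformly).

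I do not anticipate a genuine obstacle: the statement is essentially a direct consequence of the transitivity of $\langle\sigma\rangle$ on each of its cycles combined with the fact that $\sigma$ is an automorphism of $\Gamma$. The only point requiring care is bookkeeping to ensure the bijection lands inside $\sigma_j$, which is automatic from $\sigma_j$ being $\sigma$-invariant.
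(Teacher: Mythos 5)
Your proof is correct and follows essentially the same approach as the paper: both use transitivity of $\langle\sigma\rangle$ on the cycle $\sigma_i$ to find a power $\sigma^k$ sending $v$ to $v'$, and then use the fact that $\sigma^k\in\Aut(\Gamma)$ preserves both $E$ and the set $\sigma_j$. The paper states this as a terse contradiction argument, whereas you spell out the explicit bijection between neighbor sets, which makes the reasoning more transparent but is not a different idea.
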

    
    \begin{proof}
    Suppose that $u,v\in \sigma_i$ have different numbers of edges connecting them to $\sigma_j$.
    Since $u$ and $v$ are in the same cycle of $\sigma$, there exists $r$ such that $\sigma^r(u)=v$.
    But $\sigma^r$ is then not an automorphism of $\Gamma$, a contradiction.
    \end{proof}
    
    We denote the number described in \Cref{prop:degree} by $\deg(\sigma_i,\sigma_j)$.
    (Warning: It is NOT true in general that $\deg(\sigma_i,\sigma_j)=\deg(\sigma_j,\sigma_i)$; see \Cref{ex: sigma degree}.)
    We will repeatedly make use of the following identity for distinct cycles $\sigma_i$ and $\sigma_j$:
        \begin{equation}\label{eq: edges}
            E_{ij}:= \ell_i\deg(\sigma_i,\sigma_j) = \ell_j\deg(\sigma_j,\sigma_i) = \text{\# edges connecting }\sigma_i \text{ and } \sigma_j \text{ in } \Gamma.
        \end{equation}
    We also write the number of edges within a given cycle $\sigma_i$ as $E_{ii}$ and note that $E_{ii} = \frac{1}{2}\ell_i\deg(\sigma_i,\sigma_i)$.
    Observe also that the notation $E_{ij}$ \emph{is} symmetric in $i$ and $j$.
        
    \begin{example}\label{ex: sigma degree}
    Let $\Gamma$ be the complete graph on the vertex set $[3]$ and let $\sigma = (12)(3)\in\Aut(\Gamma)$.
    Then $\deg((12),(3)) = 1$ since the vertices $1$ and $2$ are each connected to $3$ by one edge, but $\deg((3),(12)) = 2$ since the vertex $3$ has two edges connecting it to vertices in the cycle $(12)$.
    Moreover, $\deg((12),(12)) = 1$ and $\deg((3),(3)) = 0$.
    \end{example}
    
    We use $\besi$ to denote the vector $\sum_{v\in\sigma_i}\bev\in\R V$.
    \Cref{thm:gzfixedpoly} and its proof closely follow the description of the fixed polytopes of the permutahedron from Theorem 2.12 of \cite{ArdilaSchindlerVindas}.
    Our result generalizes theirs: where they used permutations, we use acyclic orientations of $\Gamma$, and we also make use of identity \eqref{eq: edges}.
    
    \begin{theorem}[Description of graphic zonotope fixed polytopes]\label{thm:gzfixedpoly}
    Let $\sigma\in\Aut(\Gamma)$.
    Then the following descriptions of the fixed polytope $Z_\Gamma^\sigma$ are equivalent:
        \begin{enumerate}[(i)]
            \item\label{item:fixed polytope} It is the subset of $Z_\Gamma$ that is fixed by the action of $\sigma$.
            
            \item\label{item:vertex description} It is the convex hull of the points $\bw_\ao$, where $\ao$ ranges over acyclic orientations of $C_\Gamma(\sigma)$, and 
                \[
                \bw_\ao:=\sum_{i=1}^m\left( \frac{1}{2}\deg(\sigma_i,\sigma_i) + \sum_{\substack{j:\sigma_j\to\sigma_i\\\text{in }\ao}} \deg(\sigma_i,\sigma_j) \right)\besi.
                \]
            Furthermore, this gives a bijection between acyclic orientations of $C_\Gamma(\sigma)$ and vertices of $Z_\Gamma^\sigma$.
            
            \item\label{item:zonotope description} It is the Minkowski sum 
            \begin{equation}\label{eq:gzmink}
                \sum_{\substack{ \{\sigma_i,\sigma_j\} \text{ is an} \\ \text{edge of } C_\Gamma(\sigma)\\ i<j}} [\deg(\sigma_j,\sigma_i)\besj,\, \deg(\sigma_i,\sigma_j)\besi] + \sum_{j=1}^m \frac{1}{2}\deg(\sigma_j,\sigma_j)\besj.
            \end{equation}
        \end{enumerate}
    \end{theorem}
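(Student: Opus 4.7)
The plan is to prove the two containments \textrm{(iii)} $\subseteq$ \textrm{(i)} and \textrm{(i)} $\subseteq$ \textrm{(iii)} via direct polytopal computation, and then identify the Minkowski sum \eqref{eq:gzmink} of \textrm{(iii)} with the convex hull description of \textrm{(ii)}. The overall strategy parallels the proof of \cite[Theorem 2.12]{ArdilaSchindlerVindas} for the permutahedron case, adapted to general graphic zonotopes via \Cref{prop:degree} and identity \eqref{eq: edges}.

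For \textrm{(iii)} $\subseteq Z_\Gamma^\sigma$, each segment in \eqref{eq:gzmink} is pointwise $\sigma$-fixed because $\besi$ is $\sigma$-invariant, so the entire sum lies in the $\sigma$-fixed affine subspace of $\R V$. To see containment in $Z_\Gamma=\sum_{\{u,v\}\in E}[\beu,\bev]$, I would produce an explicit matching of Minkowski decompositions: given parameters $\lambda_{ij}\in[0,1]$ describing a point of \eqref{eq:gzmink}, assign weight $\tfrac{1}{2}$ to every intra-cycle edge of $\Gamma$ and weight $\lambda_{ij}$ (on the $\sigma_i$-endpoint) to every inter-cycle edge between $\sigma_i$ and $\sigma_j$. \Cref{prop:degree} ensures that summing these contributions recovers the original point of \eqref{eq:gzmink}.

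For $Z_\Gamma^\sigma\subseteq$ \textrm{(iii)}, I would apply the description of fixed subpolytopes (\Cref{subsec:equisetup}) as the convex hull of barycenters of $\sigma$-orbits of the vertices of $Z_\Gamma$, with the vertices given by \Cref{prop:graphic zonotope desc}. By $\sigma$-symmetry, each orbit barycenter takes the form $\sum_i c_i\besi$. The intra-cycle contribution to $c_i$ is $E_{ii}/\ell_i=\tfrac{1}{2}\deg(\sigma_i,\sigma_i)$ independent of the orientation, since every intra-cycle edge of $\Gamma$ has its head in $\sigma_i$. The inter-cycle contribution is $\sum_{j\neq i}\lambda_{ij}\deg(\sigma_i,\sigma_j)$, where $\lambda_{ij}\in[0,1]$ is the orbit-averaged proportion of inter-cycle edges directed from $\sigma_j$ into $\sigma_i$; since $\lambda_{ij}+\lambda_{ji}=1$ in every orientation, the barycenter lies in \eqref{eq:gzmink}.

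For \textrm{(ii)} $=$ \textrm{(iii)}, I would recognize \eqref{eq:gzmink} as a translated zonotope with generators $\deg(\sigma_i,\sigma_j)\besi-\deg(\sigma_j,\sigma_i)\besj$ indexed by the edges of $C_\Gamma(\sigma)$. The vectors $\besi$ have disjoint supports and are therefore linearly independent, so up to a scaling by $E_{ij}$ per generator, \eqref{eq:gzmink} is combinatorially isomorphic to the graphic zonotope $Z_{C_\Gamma(\sigma)}$. Applying \Cref{prop:graphic zonotope desc} to $C_\Gamma(\sigma)$, its vertices are in bijection with the acyclic orientations $\tilde{\ao}$ of $C_\Gamma(\sigma)$, and a direct computation identifies the vertex associated to $\tilde{\ao}$ as precisely $\bw_{\tilde{\ao}}$. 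The main obstacle is this final identification: \eqref{eq:gzmink} is not literally a graphic zonotope, and extracting the explicit vertex formula through the combinatorial isomorphism requires carefully tracking how each scaled generator contributes to the extremal sums, with identity \eqref{eq: edges} used to reconcile the two viewpoints on inter-cycle edge counts.
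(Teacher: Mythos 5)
Your proposal is correct in structure but takes a genuinely different route through the cycle of equivalences. The paper proves \eqref{item:vertex description} $\subseteq$ \eqref{item:fixed polytope}, \eqref{item:zonotope description} $\subseteq$ \eqref{item:vertex description}, and \eqref{item:fixed polytope} $\subseteq$ \eqref{item:zonotope description}, whereas you prove \eqref{item:fixed polytope} $\Leftrightarrow$ \eqref{item:zonotope description} directly and then identify \eqref{item:zonotope description} with \eqref{item:vertex description}. Your barycenter computation for \eqref{item:fixed polytope} $\subseteq$ \eqref{item:zonotope description} — read off the orbit-averaged indegree counts $D_{ji}/E_{ij}$ as the segment parameters $\lambda_{ij}$ — is cleaner and more direct than the paper's approach, which fixes a base orientation $\ao'$ and tracks the displacement $\overline{\bv_\ao}-\overline{\bv_{\ao'}}$ by flipping one edge of $\Gamma$ at a time and summing telescopically. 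Your Minkowski-matching for \eqref{item:zonotope description} $\subseteq$ \eqref{item:fixed polytope} is also sound (it is essentially the paper's \eqref{item:vertex description} $\subseteq$ \eqref{item:fixed polytope} argument unbundled). The one place you correctly flag a gap is the identification \eqref{item:vertex description} $=$ \eqref{item:zonotope description}: the abstract "combinatorial isomorphism to $Z_{C_\Gamma(\sigma)}$" needs the explicit vertex check. That check is short and you should just do it: for a generic linear functional $\by$ with cycle-averages $y_{\sigma_i}$, identity \eqref{eq: edges} gives $\by(\deg(\sigma_i,\sigma_j)\besi)-\by(\deg(\sigma_j,\sigma_i)\besj)=E_{ij}(y_{\sigma_i}-y_{\sigma_j})$, so the optimal endpoint of each segment is decided by the sign of $y_{\sigma_i}-y_{\sigma_j}$, defining an acyclic orientation $\ao$; collecting the chosen endpoints onto $\besi$ reproduces $\bw_\ao$ exactly. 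The paper in fact does this calculation directly as its \eqref{item:zonotope description} $\subseteq$ \eqref{item:vertex description} step and as its final bijection argument, rather than passing through an abstract isomorphism. One minor imprecision: the generators of \eqref{eq:gzmink} are positive multiples of $\frac{\besi}{\ell_i}-\frac{\besj}{\ell_j}$, not of $\be_i-\be_j$, so "scaling each generator by $E_{ij}$" is not quite the right description — you need the change of coordinates $\besi\mapsto\ell_i\be_i$ to land on $E_{ij}(\be_i-\be_j)$, though the normal-fan combinatorics carries over regardless since it only depends on the oriented matroid of the generating directions.
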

    \begin{proof}
    \eqref{item:vertex description} $\subseteq$ \eqref{item:fixed polytope}:
    Let $\hat{\ao}$ be any orientation of $\Gamma$ ``extending'' $\ao$, meaning that if $\sigma_i\to\sigma_j$ in $\ao$, and $v\in\sigma_i$ and $v'\in\sigma_j$ are joined by an edge of $\Gamma$, then $v\to v'$ in $\hat{\ao}$.
    Then $\bw_\ao$ is the average of the $\sigma$-orbit of the vertex of $Z_\Gamma$ corresponding to $\hat{\ao}$, so $\bw_\ao\in Z_\Gamma$.
    By definition, the coordinates of $\bw_\ao$ are constant over the cycles of $\sigma$, so $\bw_\ao\in Z_\Gamma^\sigma$ for all acyclic orientations $\ao$.\\
    
    \eqref{item:zonotope description} $\subseteq$ \eqref{item:vertex description}:
    Let $\bw$ be a vertex of \eqref{eq:gzmink} and let $\by=(y_1,\dots,y_n)\in(\R^n)^\ast$ be such that $\bw$ is the $\by$-maximal face of \eqref{eq:gzmink}.
    For $1\le i\le m$, define $y_{\sigma_i}$ to be the average of $y_k$ for $k\in\sigma_i$; so $y_{\sigma_i} = \frac{1}{\ell_i}\sum_{k\in\sigma_i}y_k$.
    Now, since $\by$ is maximal at a vertex, we know that for each edge $\{\sigma_i, \sigma_j\}$ of $C_\Gamma(\sigma)$, we must have 
        \[\by\left(\deg(\sigma_j,\sigma_i)\besj\right)\neq \by\left(\deg(\sigma_i,\sigma_j)\besi\right).\]
    Therefore it holds that
        \begin{align*}
            \deg(\sigma_j,\sigma_i)\sum_{k\in\sigma_j}y_k &\neq \deg(\sigma_i,\sigma_j)\sum_{k\in\sigma_i}y_k\\
            \ell_j\deg(\sigma_j,\sigma_i)y_{\sigma_j} &\neq \ell_i \deg(\sigma_i,\sigma_j) y_{\sigma_i}
        \end{align*}
    Using identity \eqref{eq: edges}, we find that $y_{\sigma_j}\neq y_{\sigma_i}$.
    For each edge $\{\sigma_i,\sigma_j\}$ in $C_\Gamma(\sigma)$, if $y_{\sigma_i}<y_{\sigma_j}$, assign the direction pointing from $\sigma_i$ to $\sigma_j$.
    This gives an acyclic orientation $\ao$ of $C_\Gamma(\sigma)$, and $\bw=\bw_\ao$.\\
    
    \eqref{item:fixed polytope} $\subseteq$ \eqref{item:zonotope description}:
    According to \cite[Lemma 5.4]{Stapledon}, the fixed polytope $Z_\Gamma^\sigma$ is the convex hull of the averages over the $\sigma$-orbits of the vertices of $Z_\Gamma$.
    So we need only show that for any vertex $\bv_\ao$ of $Z_\Gamma$ corresponding to the acyclic orientation $\ao$ of $\Gamma$,  the average $\overline{\bv_\ao}$ of the $\sigma$-orbit of $\bv_\ao$ is in the Minkowski sum \eqref{eq:gzmink}.
    To do this, we will fix an orientation $\ao'$ of $\Gamma$, show that $\overline{\bv_{\ao'}}$ is in the Minkowski sum, and then show that we can find a path from $\overline{\bv_{\ao'}}$ to $\overline{\bv_{\ao}}$ within the sum.
    
    We construct $\ao'$ as follows:
    Choose any ordering of the $n$ vertices of $\Gamma$ such that all the vertices in the cycle $\sigma_i$ are less than all the vertices in $\sigma_{i+1}$ for $1\le i<m$, and let $\ao'$ be the acyclic orientation of $\Gamma$ induced by this ordering of the vertices.
    Then the vertex $\bv_{\ao'}$ of $Z_\Gamma$ is defined as in \Cref{prop:graphic zonotope desc}, and the average of the $\sigma$-orbit of $\bv_{\ao'}$ is
        \[
        \overline{\bv_{\ao'}} = \sum_{\substack{\{\sigma_i,\sigma_j\} \text{ is an}\\ \text{edge of } C_\Gamma(\sigma)\\i<j}} \deg(\sigma_j,\sigma_i)\be_{\sigma_j} + \sum_{j=1}^m \frac{1}{2}\deg(\sigma_j,\sigma_j)\be_{\sigma_j},
        \]
    which is clearly seen to be in the sum \eqref{eq:gzmink}.
    
    Now consider any acyclic orientation $\ao$ of $\Gamma$.
    Let $\ao_0:=\ao',\ao_1,\ldots,\ao_\ell:=\ao$ be a sequence of acyclic orientations of $\Gamma$ where $\ao_k$ is obtained from $\ao_{k-1}$ by reversing the orientation of one edge for $1\le k\le \ell$.
    For each $k$, if the edge of $\ao_{k-1}$ that is reversed to obtain $\ao_k$ points from a vertex in $\sigma_i$ to a vertex in $\sigma_j$, then we have that
        \[
        \overline{\bv_{\ao_k}} - \overline{\bv_{\ao_{k-1}}} =
        \frac{1}{\ell_i}\be_{\sigma_i} - \frac{1}{\ell_j}\be_{\sigma_j} =
        \frac{1}{E_{ij}}\left(\deg(\sigma_i,\sigma_j)\be_{\sigma_i} - \deg(\sigma_j,\sigma_i)
        \be_{\sigma_j}\right).
        \]
    Note that if the edge that is reversed connects two vertices of $\Gamma$ that are in the same cycle of $\sigma$, then there is no change in the $\sigma$-average.
    Now for each pair of distinct cycles $\sigma_i$ and $\sigma_j$, let $D_{ij}$ be the number of edges between $\sigma_i$ and $\sigma_j$ that have different orientations in $\ao'$ and $\ao$.
    Then we get
        \begin{align*}
        \overline{\bv_{\ao}} - \overline{\bv_{\ao'}} &=
        \left(\overline{\bv_{\ao_\ell}} - \overline{\bv_{\ao_{\ell-1}}}\right) + \cdots + \left(\overline{\bv_{\ao_2}} - \overline{\bv_{\ao_1}} \right) + \left( \overline{\bv_{\ao_1}} - \overline{\bv_{\ao_0}} \right)\\
        &= \sum_{1\le i<j\le m} \frac{D_{ij}}{E_{ij}}\left( \deg(\sigma_i,\sigma_j)\be_{\sigma_i} - \deg(\sigma_j,\sigma_i)\be_{\sigma_j} \right)
        \end{align*}
    If there are no edges between $\sigma_i$ and $\sigma_j$ in $\Gamma$, then $D_{ij}$ must be $0$, so $\deg(\sigma_i,\sigma_j)\be_{\sigma_i} - \deg(\sigma_j,\sigma_i)\be_{\sigma_j}$ may only contribute to this sum if there is an edge between $\sigma_i$ and $\sigma_j$ in $C_\Gamma(\sigma)$.
    Furthermore, the maximum possible number of edges between $\sigma_i$ and $\sigma_j$ is $E_{ij}$, so $0\le \frac{D_{ij}}{E_{ij}}\le 1$.
    Hence adding our earlier expression for $\overline{\bv_{\ao'}}$ produces a point in the zonotope \eqref{eq:gzmink}.
    
    Finally, it remains to prove that the acyclic orientations of $C_\Gamma(\sigma)$ are in bijection with the vertices of $Z_\Gamma^\sigma$.
    By definition, each acyclic orientation $\ao$ results in a distinct point $\bw_\ao$.
    We already showed that every vertex of $Z_\Gamma^\sigma$ has the form $\bw_\ao$ for some acyclic orientation $\ao$.
    For any $\bw_\ao$, let $\by = \sum_{i=1}^m y_i ( \sum_{k\in\sigma_i}\be_k^\ast)$ be some linear functional in $(\R^n)^\ast$ such that $y_i<y_j$ whenever $\sigma_i\to\sigma_j$ in $\ao$.
    Then $\by(\deg(\sigma_j,\sigma_i)\be_{\sigma_j}) = E_{ij}y_j $, so we can see that exactly one endpoint of each of the line segments comprising $\eqref{eq:gzmink}$ is maximized by $\by$, and the resulting sum gives exactly $\bw_\ao$.
    So $\bw_\ao$ is the unique point in $Z_\Gamma^\sigma$ maximizing $\by$, and is hence a vertex.
    
    \end{proof}
    
    
    
    
    In general, a zonotope generated by line segments with direction vectors $\bs_1,\dots,\bs_k$ can be tiled by half-open parallelotopes that are in bijection with linearly independent subsets of $\{\bs_1,\dots,\bs_k\}$.
    In the case of $Z_\Gamma^\sigma$, the generating vectors are 
        \begin{equation}\label{eq: generators}
       \left \{\deg(\sigma_i,\sigma_j)\be_{\sigma_i}-\deg(\sigma_j,\sigma_i)\be_{\sigma_j}: \{\sigma_i,\sigma_j\} \text{ is an edge of } C_\Gamma(\sigma),\, i<j \right \}.
        \end{equation}
    We define a \emph{subforest} of $C_\Gamma(\sigma)$ to be any graph on the same vertex set $\{\sigma_1,\dots,\sigma_m\}$ as $C_\Gamma(\sigma)$ whose edge set is a subset of the edges of $C_\Gamma(\sigma)$, and that does not contain any cycles.
        
    \begin{proposition}
    The linearly independent subsets of the vectors in (\ref{eq: generators}) are in bijection with subforests of the $\sigma$-connectivity graph $C_\Gamma(\sigma)$.
    \end{proposition}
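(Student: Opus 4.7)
The plan is to reduce the statement to the classical fact that in the standard realization of the graphic matroid of a graph $H$, a subset of edge vectors $\{f_i - f_j : \{i,j\} \text{ edge of } H\}$ (where the $f_i$ are a basis) is linearly independent if and only if it corresponds to a subforest of $H$. The bijection itself is trivial (each edge of $C_\Gamma(\sigma)$ corresponds to exactly one generator in \eqref{eq: generators}), so the substantive claim is the characterization of linear independence.

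First, I would rewrite each generator in a more transparent form. Using identity \eqref{eq: edges}, we have
\[
\deg(\sigma_i,\sigma_j)\be_{\sigma_i}-\deg(\sigma_j,\sigma_i)\be_{\sigma_j} \;=\; \frac{E_{ij}}{\ell_i}\be_{\sigma_i} - \frac{E_{ij}}{\ell_j}\be_{\sigma_j} \;=\; E_{ij}\left(f_i - f_j\right),
\]
where I set $f_i := \be_{\sigma_i}/\ell_i$. Since $\{\sigma_i,\sigma_j\}$ is an edge of $C_\Gamma(\sigma)$, the scalar $E_{ij}$ is strictly positive, so each generator is a nonzero rescaling of $f_i - f_j$, and rescaling does not affect linear (in)dependence.

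Next, I would observe that $f_1,\dots,f_m$ are linearly independent in $\R V$: the cycles $\sigma_1,\dots,\sigma_m$ partition $V$, so the $f_i$ are supported on disjoint subsets of the basis $\{\be_v : v \in V\}$. Therefore I can view the $f_i$ as a basis of an $m$-dimensional subspace of $\R V$, and the question reduces to linear independence of a collection of vectors $\{f_i - f_j\}$ indexed by a subset $F$ of edges of the simple graph $C_\Gamma(\sigma)$.

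The final step is the classical graphic matroid argument, which I would prove in both directions for completeness. If $F$ contains a cycle $\sigma_{i_1} - \sigma_{i_2} - \cdots - \sigma_{i_k} - \sigma_{i_1}$, then the telescoping sum $\sum_{\ell=1}^{k}(f_{i_\ell} - f_{i_{\ell+1}}) = 0$ (indices mod $k$, with appropriate signs to account for the ordering convention $i<j$) provides a nontrivial dependence. Conversely, if $F$ is a subforest, I would proceed by induction on $|F|$: a nonempty forest has a leaf $\sigma_i$, adjacent to a unique edge $e=\{\sigma_i,\sigma_j\}\in F$, so the coordinate $f_i$ appears in $f_i - f_j$ and in no other generator indexed by $F$; thus any linear dependence forces the coefficient of this generator to vanish, and the inductive hypothesis applied to $F\setminus\{e\}$ finishes the argument. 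The main (minor) obstacle is handling sign conventions from the $i<j$ choice when writing out the cycle dependency, but this is purely bookkeeping.
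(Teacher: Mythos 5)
Your proof is correct and takes essentially the same approach as the paper: both begin by using identity \eqref{eq: edges} to rewrite each generator as a positive scalar multiple of $\be_{\sigma_i}/\ell_i - \be_{\sigma_j}/\ell_j$, reducing the question to the standard graphic-matroid characterization of linear independence. The only difference is that the paper then cites \cite[Lemma 3.2]{ArdilaSchindlerVindas} for this final step, whereas you spell out the classical cycle/leaf-peeling argument in full, which is a fine and self-contained way to finish.
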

    
    \begin{proof}
    The bijection is as follows:
    Given a subgraph of $C_\Gamma(\sigma)$, for each edge $\{i,j\}$ where $i<j$, take the vector $\deg(\sigma_i,\sigma_j)\be_{\sigma_i} - \deg(\sigma_j,\sigma_i)\be_{\sigma_j}$ to be in the subset; similarly, given a subset of the generating set, take the corresponding edges of $C_\Gamma(\sigma)$.
    It remains to show that linearly independent subsets of the generators correspond exactly to subforests of $C_\Gamma(\sigma)$.
    By identity \eqref{eq: edges}, the generating vector corresponding to the edge $\{i,j\}$ is simply a positive multiple of the vector $\frac{\be_{\sigma_i}}{\ell_i} - \frac{\be_{\sigma_j}}{\ell_j}$.
    From here we can follow exactly the proof of \cite[Lemma 3.2]{ArdilaSchindlerVindas}, noting that our generators are a subset of theirs and therefore inherit the same linear independence properties.
    \end{proof}
    
    For a given subforest $F$ of $C_\Gamma(\sigma)$, we denote by $\hobox{F}$ the corresponding half-open parallelotope in the tiling of $Z_\Gamma^\sigma$. 
    
    \begin{proposition}
    The volume of $\hobox{F}$ is given by the formula
        \begin{equation}\label{eq:volume F}
        \vol{\hobox{F}} := \left(\prod_{\substack{\text{edge } \{\sigma_i,\sigma_j\}\\\text{ of } F} } E_{ij} \right)\cdot
        \left( \prod_{i=1}^m \frac{1}{\ell_i} \right)\cdot
        \left( \prod_{\substack{\text{conn. comp.}\\T \text{ of } F}} \gcd(\ell_i:\sigma_i\in T) \right).
        \end{equation}
    \end{proposition}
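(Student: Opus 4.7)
The proof follows the strategy used for the permutahedron case in \cite{ArdilaSchindlerVindas}, modified to incorporate the scaling factors $E_{ij}$ that now appear in the generators of the graphic-zonotope fixed polytope.

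The first step is to apply identity~\eqref{eq: edges} to rewrite each generator of $\hobox{F}$ as
$$
\deg(\sigma_i,\sigma_j)\besi - \deg(\sigma_j,\sigma_i)\besj \;=\; E_{ij}\left(\tfrac{1}{\ell_i}\besi - \tfrac{1}{\ell_j}\besj\right).
$$
Thus $\hobox{F}$ is obtained by independently rescaling, by the factor $E_{ij}$, each generator of the ``primitive'' half-open parallelotope whose generators are the vectors $\tfrac{1}{\ell_i}\besi - \tfrac{1}{\ell_j}\besj$, one per edge of $F$. Since rescaling each generator by $E_{ij}$ multiplies the lattice volume by $E_{ij}$, this immediately factors out the product $\prod_{\{\sigma_i,\sigma_j\} \in F} E_{ij}$ and reduces the task to computing the volume of this primitive parallelotope.

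The second step exploits that $F$ is a forest. Its connected components partition the vertex set $\{\sigma_1,\ldots,\sigma_m\}$, and the primitive generators associated to edges in distinct components have disjoint coordinate supports in the basis $\{\besi\}$. The primitive parallelotope therefore splits as an orthogonal product over the components of $F$, and its volume factors accordingly. What remains is a per-tree identity: for each component $T$, I must show that the volume of the half-open parallelotope generated by $\{\tfrac{1}{\ell_i}\besi - \tfrac{1}{\ell_j}\besj : \{\sigma_i,\sigma_j\} \text{ an edge of } T\}$ equals $\gcd(\ell_i : \sigma_i \in T) \cdot \prod_{\sigma_i \in T} \tfrac{1}{\ell_i}$.

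The principal obstacle is this per-tree computation. The generators have fractional entries, and the relevant lattice is $\Z V \cap \aff(Z_\Gamma^\sigma)$ rather than $\Z V$ itself. I would compute the lattice index $[\Z V \cap W_T : \langle \tfrac{1}{\ell_i}\besi - \tfrac{1}{\ell_j}\besj \rangle]$, where $W_T$ denotes the linear span of the primitive generators associated to $T$. Clearing denominators by a factor of $\prod_{\sigma_i \in T} \ell_i$ reduces the problem to a computation with integer vectors, and the lattice index then emerges as the greatest common divisor of maximal minors of the resulting matrix; by a cofactor expansion along the tree's pendants (which repeatedly picks up one factor of $\ell_i$ at a time), this gcd simplifies to $\gcd(\ell_i : \sigma_i \in T)$. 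Since our generators form a subset of those in the permutahedron case, this calculation transfers with only notational adjustments from the analogous step in \cite{ArdilaSchindlerVindas}. Multiplying the per-tree contributions together with the factored-out $\prod_{\{\sigma_i,\sigma_j\}\in F} E_{ij}$ then yields the stated formula.
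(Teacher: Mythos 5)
Your proof is correct and takes essentially the same approach as the paper, which simply cites \cite[Lemma~3.3]{ArdilaSchindlerVindas} as a direct consequence; you have unpacked what ``directly'' entails by making the rescaling step (factoring out $E_{ij}$ via identity~\eqref{eq: edges}), the decomposition over tree components, and the final appeal to the permutahedral per-tree gcd computation explicit.
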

    
    \begin{proof}
    This follows directly from \cite[Lemma 3.3]{ArdilaSchindlerVindas}.
    \end{proof}
    
    The \emph{$2$-valuation} of a positive integer k, denoted $\val(k)$, is the highest power of $2$ dividing $k$.
    For example, $\val(24)=3$.
    
    \begin{definition}\label{def:compatibility}
    A subforest $F$ of $C_\Gamma(\sigma)$ is \emph{$(\sigma,\Gamma)$-compatible} if for all connected components $T$ of $F$, we have
        \begin{equation}\label{eq:compatibility}
        \min_{i\in T} \val(\ell_i) \le \val\left( \sum_{j\in T} E_{jj} \right).
        \end{equation}
    \end{definition}
    
    \begin{proposition}
    Let $\sigma\in\Aut(\Gamma)$ have cycle type $(\ell_1,\dots,\ell_m)$, and let $F$ be a subforest of $C_\Gamma(\sigma)$.
    Then $\aff(\hobox{F})$ contains lattice points if and only if $F$ is $(\sigma,\Gamma)$-compatible.
    \end{proposition}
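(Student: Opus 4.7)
The plan is to characterize $\aff(\hobox{F})$ by affine-linear equations in the $\sigma$-fixed coordinates and then convert the existence of integer solutions into the $2$-valuation inequality of \Cref{def:compatibility}.  Starting from the Minkowski description in \Cref{thm:gzfixedpoly}\eqref{item:zonotope description}, the affine span of $\hobox{F}$ takes the form
\[
\aff(\hobox{F}) \;=\; p_F \;+\; \operatorname{span}\!\left\{\tfrac{\besi}{\ell_i} - \tfrac{\besj}{\ell_j} \;:\; \{\sigma_i,\sigma_j\} \text{ an edge of } F\right\},
\]
where $p_F$ is a base point obtained by choosing one endpoint of each line segment corresponding to a non-$F$ edge of $C_\Gamma(\sigma)$ and adding the translation $\sum_j \tfrac{E_{jj}}{\ell_j}\besj$.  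Since $\aff(\hobox{F})$ is contained in the $\sigma$-fixed subspace of $\R V$, any lattice point in it has the form $\sum_i \alpha_i \besi$ with all $\alpha_i \in \Z$.

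By identity \eqref{eq: edges}, each generator $\besi/\ell_i - \besj/\ell_j$ has $\ell$-weighted coefficient sum $\ell_i\cdot\tfrac{1}{\ell_i} - \ell_j\cdot\tfrac{1}{\ell_j} = 0$.  Hence for each connected component $T$ of $F$, the quantity $\sum_{\sigma_i\in T}\ell_i\alpha_i$ is constant on $\aff(\hobox{F})$, and the membership condition $\sum_i \alpha_i\besi \in \aff(\hobox{F})$ splits into one linear equation per component,
\[
\sum_{\sigma_i\in T}\ell_i\alpha_i \;=\; c_T,
\]
where $c_T$ is the corresponding $\ell$-weighted coordinate sum of $p_F$ restricted to $T$.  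By B\'ezout's lemma, integer solutions exist iff $\gcd(\ell_i : \sigma_i\in T) \mid c_T$ for every component $T$ of $F$.  I then show that $c_T$ is well-defined modulo this gcd and in fact $c_T \equiv \sum_{j\in T} E_{jj}$: the translation contributes exactly $\sum_{j\in T} E_{jj}$, while each non-$F$ edge of $C_\Gamma(\sigma)$ meeting $T$ contributes either $0$ or $E_{ij}$, which by \eqref{eq: edges} is a multiple of both $\ell_i$ and $\ell_j$, and therefore of $\gcd(\ell_i : \sigma_i\in T)$ whenever the edge meets $T$.  So the condition reduces to $\gcd(\ell_i : \sigma_i\in T) \mid \sum_{j\in T}E_{jj}$ for every component $T$, independently of the tiling convention used to place $\hobox{F}$ inside $Z_\Gamma^\sigma$.

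It remains to reduce this gcd divisibility to the $2$-valuation condition \eqref{eq:compatibility}.  For any odd prime $p$, the identity $2 E_{jj} = \ell_j\deg(\sigma_j,\sigma_j)$ shows that the $p$-part of $\ell_j$ already divides $E_{jj}$, hence divides $\sum_{j\in T} E_{jj}$ and $\gcd(\ell_i : \sigma_i\in T)$ simultaneously; so the gcd divisibility at any odd prime holds automatically.  The only prime that can obstruct the divisibility is $p=2$, and the condition there is precisely \eqref{eq:compatibility}.  The most delicate point of the argument is the middle step—verifying that $c_T$ lies in a well-defined residue class modulo $\gcd(\ell_i : \sigma_i\in T)$—since this is what makes the criterion intrinsic to the pair $(F,\sigma)$ rather than dependent on the choice of zonotopal tiling.
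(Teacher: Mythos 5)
Your proof is correct and follows essentially the same route as the paper's: restrict to the $\sigma$-fixed coordinates $\be_{\sigma_i}$, observe that the $\ell$-weighted coordinate sum $\sum_{\sigma_i\in T}\ell_i y_i$ is constant over each connected component $T$ of $F$, invoke B\'ezout to get the gcd-divisibility criterion, discard the $E_{ij}$ contributions as automatic, and reduce the remaining obstruction $\sum_{j\in T}E_{jj}$ to a $2$-adic condition via $2E_{jj}=\ell_j\deg(\sigma_j,\sigma_j)$. One place where you are actually more careful than the paper: the translate placing $\hobox{F}$ inside the Shephard tiling of $Z_\Gamma^\sigma$ depends on which endpoint of each non-$F$ segment is chosen, and you explicitly verify that the contributions of these choices to $c_T$ are $0$ or $E_{ij}$ and hence vanish modulo $\gcd(\ell_i:\sigma_i\in T)$, so the criterion is intrinsic to $(F,\sigma)$. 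The paper instead writes a translate built only from $F$-edges and the half-integer shift; the resulting constant is congruent to the true one modulo the gcd, so the argument goes through, but that invariance is left implicit. One tiny phrasing nit: in the odd-prime step, the claim should be that the $p$-part of $\gcd(\ell_i:\sigma_i\in T)$ (not of an individual $\ell_j$) divides each $E_{jj}$ with $\sigma_j\in T$ and hence the sum; your intended meaning is clear, but as written the sentence conflates the two.
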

    
    \begin{proof}
    The affine span of $\hobox{F}$ is the span of the vectors $\deg(\sigma_i,\sigma_j)\be_{\sigma_i}-\deg(\sigma_j,\sigma_i)\be_{\sigma_j}$ corresponding to to the edges $\{\sigma_i,\sigma_j\}$ of $F$, translated by the vector $$\sum_{j=1}^m\frac{1}{2}\deg(\sigma_j,\sigma_j)\be_{\sigma_j} + \sum_{\substack{\text{edge }\{\sigma_i,\sigma_j\}\\\text{of } F,\,i<j}} \deg(\sigma_j,\sigma_i)\be_{\sigma_j}.$$
    Let $\by=\sum_{i=1}^m y_i\be_{\sigma_i}$ be a point in $\aff(\hobox{F})$.
    Then the coordinates $y_i$ satisfy, for all connected components~$T$ of $F$,
        \begin{equation}\label{eq:affspan}
        \sum_{\sigma_j\in T} \ell_jy_j
        = \sum_{\sigma_j\in T} E_{jj} + \sum_{\substack{\text{edge }\{\sigma_i,\sigma_j\}\\\text{of }T}} E_{ij}.
        \end{equation}
    We want to determine when \eqref{eq:affspan} has integer solutions $\by$.
    It is a fact from elementary number theory that this occurs exactly when $\gcd(\ell_j:\sigma_j\in T)$ divides the right side of \eqref{eq:affspan}.
    Clearly each $E_{ij}$ is divisible by $\gcd(\ell_j:\sigma_j\in T)$, so we may focus our attention on the term $\sum_{\sigma_j\in T}E_{jj}$.
    If we multiply this term by~$2$, then we get $\sum_{\sigma_j\in T} \ell_j\deg(\sigma_j,\sigma_j)$ which is clearly divisible by $\gcd(\ell_j:\sigma_j\in T)$.
    Hence we need only consider $2$-valuations to determine whether divisibility holds.
    Finally, noticing that ${\val(\gcd(\ell_j:\sigma_j\in T)) = \min_{\sigma_j\in T} \val(\ell_j)}$ allows us to conclude that the condition of  $(\sigma,\Gamma)$-compatibility introduced in \Cref{def:compatibility} is exactly what we need to guarantee integer solutions.
    \end{proof}
    
    Let $c(F)$ be the number of connected components of a graph $F$, and let $m$ be the number of cycles of~$\sigma\in\Aut(\Gamma)$.
    Note that for a subforest $F$ of $C_\Gamma(\sigma)$, the number of edges of $F$ is $m-c(F)$.
    We omit proofs for the following two corollaries as they mirror the proofs of Theorem~1.1 and Proposition 5.1 in \cite{ASV}.
    
    \begin{corollary}\label{cor:gz quasipolynomial}
    The Ehrhart quasipolynomial of the fixed polytope $Z_\Gamma^\sigma$ is
        \[
        \L(Z_\Gamma^\sigma;t) = \begin{cases}
        \displaystyle{\sum_{\substack{\text{subforest}\\ F \text{ of } C_\Gamma(\sigma)}}} \vol{\hobox{F}}\cdot t^{m-c(F)}, & t \text{ even}\\
        \displaystyle{\sum_{\substack{(\sigma,\Gamma)\text{-compatible}\\ \text{subforest}\\ F \text{ of } C_\Gamma(\sigma)}}} \vol{\hobox{F}}\cdot t^{m-c(F)}, & t \text{ odd}.
        \end{cases}
        \]
    \end{corollary}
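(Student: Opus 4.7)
The strategy is to apply Stanley's half-open zonotope formula (Theorem \ref{thm:zonotope_ehrhart}) to the Minkowski sum description of $Z_\Gamma^\sigma$ from Theorem \ref{thm:gzfixedpoly}(iii), using the subforest bijection of the preceding proposition and the volume formula \eqref{eq:volume F}. The twist is that $Z_\Gamma^\sigma$ is only a rational polytope (its vertices are half-integral), so we must keep careful track of which half-open parallelotopes in the dilated tiling of $tZ_\Gamma^\sigma$ actually contain lattice points.

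I would first set up the tiling. By Theorem \ref{thm:gzfixedpoly}(iii), $Z_\Gamma^\sigma$ is a translate of the zonotope generated by the vectors in \eqref{eq: generators}, with translation $\sum_j \tfrac12\deg(\sigma_j,\sigma_j)\besj$. Dilating by $t$ and applying the bijection between linearly independent subsets and subforests gives a half-open decomposition of $tZ_\Gamma^\sigma$ into pieces $t\hobox{F}$ indexed by subforests $F$ of $C_\Gamma(\sigma)$. Each such $t\hobox{F}$ has lattice-vector generators (scaled by $t$) and base point obtained by translating $t\cdot\sum_j \tfrac12\deg(\sigma_j,\sigma_j)\besj$ by an integer vector. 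By the standard argument for half-open parallelotopes with lattice generators, $t\hobox{F}$ contains exactly $\vol(t\hobox{F}) = t^{m-c(F)}\vol\hobox{F}$ lattice points when $\aff(t\hobox{F})$ contains any lattice point, and zero lattice points otherwise. So the task reduces to deciding which subforests $F$ yield a lattice-point-containing affine span at dilation $t$.

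I would then adapt the $2$-adic analysis from Proposition \ref{def:compatibility}'s proof, replacing \eqref{eq:affspan} with its $t$-dilate: for each connected component $T$ of $F$, the defining equation becomes $\sum_{\sigma_j\in T}\ell_jy_j = t\bigl(\sum_{\sigma_j\in T}E_{jj}+\sum_{\text{edges of }T}E_{ij}\bigr)$. Integrality of solutions is governed by whether $\gcd(\ell_j:\sigma_j\in T)$ divides $t\sum_{\sigma_j\in T}E_{jj}$, since the off-diagonal edge contributions are automatically divisible. For odd $t$ the divisibility condition collapses to the $t=1$ version, namely $(\sigma,\Gamma)$-compatibility. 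For even $t$, the identity $2\sum_{\sigma_j\in T}E_{jj}=\sum_{\sigma_j\in T}\ell_j\deg(\sigma_j,\sigma_j)$ forces $\val\bigl(\sum_{\sigma_j\in T}E_{jj}\bigr)\ge \min_{\sigma_j\in T}\val(\ell_j)-1$, so the factor $\val(t)\ge 1$ always makes up the missing power of two and every subforest contributes.

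The main obstacle is precisely this parity/$2$-valuation bookkeeping, which separates the two cases of the piecewise formula; once it is in hand, summing the lattice point contributions over the appropriate family of subforests gives the two branches of the claim. Since the argument runs in lockstep with the proofs of Theorem~1.1 and Proposition~5.1 in \cite{ASV} (with acyclic orientations of $\Gamma$ replaced by acyclic orientations of $C_\Gamma(\sigma)$, and with identity \eqref{eq: edges} supplying the needed symmetry), no essentially new ingredient is required beyond what has already been assembled above.
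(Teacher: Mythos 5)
Your argument is correct and follows essentially the same route the paper intends: it dilates the zonotopal tiling of $Z_\Gamma^\sigma$ from \Cref{thm:gzfixedpoly}\eqref{item:zonotope description}, invokes the subforest bijection and volume formula, and then runs the $2$-adic divisibility analysis on the $t$-dilated analogue of \eqref{eq:affspan} exactly as in the proof of Theorem 1.1 of \cite{ASV}, which the paper explicitly cites as the model for this omitted proof. Your observation that $2\sum_{\sigma_j\in T}E_{jj}=\sum_{\sigma_j\in T}\ell_j\deg(\sigma_j,\sigma_j)$ already forces $\val\bigl(\sum_{\sigma_j\in T}E_{jj}\bigr)\ge\min_{\sigma_j\in T}\val(\ell_j)-1$, so that the extra factor of $2$ from even $t$ always rescues divisibility, is precisely the bookkeeping that separates the two branches, and it is correctly carried out.
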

    
    \emph{Eulerian polynomials} commonly arise when computing the Ehrhart series of polytopes.
    The Eulerian polynomial $A_n(z)$ is defined by the identity
        \[
        \sum_{t\ge 0}t^n z^t = \frac{A_n(z)}{(1-z)^{n+1}}.
        \]
    Notice that $A_n(z)$ has no constant term; the coefficients of $\frac{1}{z}A_n(z)$ are called the \emph{Eulerian numbers}.
    We can describe them using ascents of permutations, where the position $i\in\{1,2,\dots,n-1\}$ is an \emph{ascent} of $\sigma\in S_n$ if $\sigma(i+1)>\sigma(i)$.
    The coefficient of $z^k$ in $\frac{1}{z}A_n(z)$ is the number of permutations in $S_n$ with exactly $k$ ascents, denoted $A_{n,k}$.
    
    \begin{corollary}\label{cor:graphic zonotope ehrhart series}
    The Ehrhart series of the fixed polytope $Z_\Gamma^\sigma$ has the following form:
        \[
        \Ehr(Z_\Gamma^\sigma;z) = \sum_{\substack{(\sigma,\Gamma)\text{-compatible}\\\text{subforests}\\F \text{ of } C_\Gamma(\sigma)}}\frac{\vol{\hobox{F}}\cdot A_{m-c(F)}(z)}{(1-z)^{m-c(F)+1}} + \sum_{\substack{(\sigma,\Gamma)\text{-incompatible}\\\text{subforests}\\F \text{ of } C_\Gamma(\sigma)}} \frac{2^{m-c(F)}\cdot \vol{\hobox{F}}\cdot A_{m-c(F)}(z^2)}{(1-z^2)^{m-c(F)+1}}
        \]
    \end{corollary}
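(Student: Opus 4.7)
The plan is to derive the Ehrhart series by summing the Ehrhart quasipolynomial expression from \Cref{cor:gz quasipolynomial} against $z^t$ and repackaging the result using the Eulerian polynomial identity $\sum_{t\ge 0} t^n z^t = A_n(z)/(1-z)^{n+1}$. Since \Cref{cor:gz quasipolynomial} gives two different polynomial expressions for $\L(Z_\Gamma^\sigma;t)$ depending on the parity of $t$, the natural first move is to split $\Ehr(Z_\Gamma^\sigma;z) = \sum_{t\ge 0} \L(Z_\Gamma^\sigma;t)z^t$ into even and odd parts and regroup the contributions by subforest.

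More concretely, first I would observe that each $(\sigma,\Gamma)$-compatible subforest $F$ contributes a monomial $\vol(\hobox{F})\cdot t^{m-c(F)}$ for \emph{every} $t\ge 0$, while each $(\sigma,\Gamma)$-incompatible $F$ contributes $\vol(\hobox{F})\cdot t^{m-c(F)}$ only when $t$ is even. Swapping the order of summation then gives
\[
\Ehr(Z_\Gamma^\sigma;z) = \sum_{\substack{(\sigma,\Gamma)\text{-comp.}\\F}} \vol(\hobox{F})\sum_{t\ge 0} t^{m-c(F)} z^t \;+\; \sum_{\substack{(\sigma,\Gamma)\text{-incomp.}\\F}} \vol(\hobox{F})\sum_{\substack{t\ge 0\\t \text{ even}}} t^{m-c(F)} z^t.
\]
The first inner sum is immediately $A_{m-c(F)}(z)/(1-z)^{m-c(F)+1}$ by the Eulerian identity, handling the compatible case.

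For the incompatible case, I would substitute $t=2s$ to rewrite the inner sum as $2^{m-c(F)}\sum_{s\ge 0} s^{m-c(F)}(z^2)^s$, and then apply the Eulerian identity again (with the variable $z^2$) to obtain $2^{m-c(F)}\cdot A_{m-c(F)}(z^2)/(1-z^2)^{m-c(F)+1}$. Combining the two pieces yields precisely the asserted formula.

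This proof is essentially a bookkeeping exercise once \Cref{cor:gz quasipolynomial} is in hand, so I do not anticipate a genuine obstacle. The only point requiring mild care is verifying that the Eulerian identity applies term-by-term after the even/odd split (in particular, that the $t=0$ term contributes nothing, which is automatic since $A_n(z)$ has no constant term for $n\ge 1$, and the $m-c(F)=0$ case corresponds to the empty subforest and reduces to the standard geometric series). With that check, the manipulation above gives the stated closed form.
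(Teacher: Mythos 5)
Your proof is correct and matches the approach the paper takes (the paper omits the proof, noting it mirrors Proposition~5.1 of \cite{ASV}, which is precisely the even/odd split followed by the substitution $t=2s$ and two applications of the Eulerian identity). The bookkeeping around the $t=0$ term and the empty subforest is handled correctly as well, so this is exactly the intended argument.
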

    
    To assess whether $\Hstar(Z_\Gamma;z)$ is a polynomial, we need to examine for which $\sigma\in\Aut(\Gamma)$ the poles of $\Ehr(Z_\Gamma^\sigma;z)$ cancel with the zeros of $\det(I-\rho(\sigma)z) = \prod_{i=1}^m(1-z^{\ell_i})$, where $m$ is the number of cycles of $\sigma$ and $\ell_1,\dots,\ell_m$ are the cycle lengths.
    The series $\Ehr(Z_\Gamma^\sigma;z)$ always has a pole at $z=1$, may have a pole at $z=-1$, and never has a pole at any other value of $z$.
    We can see from \Cref{cor:graphic zonotope ehrhart series} that the pole at $z=1$ has order at most $m$, and since $\prod_{i=1}^m(1-z^{\ell_i})$ has a zero of order $m$ at $z=1$, the pole at $z=1$ will always cancel.
    Hence we may focus on the pole at $z=-1$.
    The following theorem tells us when this pole cancels.
    
    \begin{theorem}\label{thm:graphic zonotope polynomial H*}
    Let $\sigma\in\Aut(\Gamma)$ have cycle type $\ell_1,\dots,\ell_m$.
    The series $\Hstar(Z_\Gamma;z)(\sigma)$ is a polynomial if and only if one of the following conditions holds:
        \begin{enumerate}[(a)]
            \item\label{item:all odd} All $\ell_i$ are odd.
            \item\label{item:even even} All even $\ell_i$ have $\deg(\sigma_i,\sigma_i)$ even.
            \item\label{item:even odd} There exists an even $\ell_i$ with $\deg(\sigma_i,\sigma_i)$ odd, and
                \[
                \text{\# even }\ell_i > \max\{\text{\# of edges in a }(\sigma,\Gamma)\text{-incompatible subforest of }C_\Gamma(\sigma)\}.
                \]
        \end{enumerate}
    \end{theorem}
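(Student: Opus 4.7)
The plan is to translate polynomiality of $\Hstar(Z_\Gamma;z)(\sigma) = \Ehr(Z_\Gamma^\sigma;z)\cdot\det(I - z\rho(\sigma))$ into a pole/zero cancellation problem. Writing $\det(I - z\rho(\sigma)) = \prod_{i=1}^m(1-z^{\ell_i})$, and noting via \Cref{cor:graphic zonotope ehrhart series} that $\Ehr(Z_\Gamma^\sigma;z)$ has poles only at $z=\pm 1$, it suffices to verify that the zero orders of $\det(I - z\rho(\sigma))$ at these points dominate the pole orders of $\Ehr(Z_\Gamma^\sigma;z)$. The paragraph preceding the theorem already disposes of $z=1$, so the entire question reduces to a local analysis at $z=-1$.

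At $z=-1$, only the incompatible summands from \Cref{cor:graphic zonotope ehrhart series} can contribute, since the compatible summands have denominator $(1-z)^{m-c(F)+1}$. For an incompatible subforest $F$ with $k = m-c(F)$ edges, I would expand $\frac{2^k\,\vol{\hobox{F}}\,A_k(z^2)}{(1-z^2)^{k+1}}$ around $z=-1$ using $A_k(1)=k!$ and $1-z^2 = (1+z)(1-z)$; the leading Laurent coefficient at $(1+z)^{-(k+1)}$ computes to the strictly positive number $\vol{\hobox{F}}\cdot k!/2$. Hence contributions from different incompatible subforests of maximal edge count cannot cancel, so $\Ehr(Z_\Gamma^\sigma;z)$ has a pole at $z=-1$ of order exactly $k^*+1$, where $k^* := \max\{m-c(F): F \text{ incompatible}\}$ (or no pole at $z=-1$ if no incompatible subforest exists). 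Since $\det(I-z\rho(\sigma))$ has a zero at $z=-1$ of order $e := \#\{i : \ell_i \text{ even}\}$, polynomiality of $\Hstar(Z_\Gamma;z)(\sigma)$ is equivalent to: there are no incompatible subforests, or $e > k^*$.

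The remaining work is to match this criterion against the three clauses via \Cref{def:compatibility}. In case (a), every $\val(\ell_i) = 0$, so compatibility $\min_{i\in T}\val(\ell_i) \le \val(\sum_{j\in T}E_{jj})$ is automatic on every component $T$ of every subforest. In case (b), a component $T$ either contains an odd $\ell_i$ (trivially handled as in (a)) or consists entirely of even $\ell_i$ with $\deg(\sigma_i,\sigma_i)$ even; in the latter case $E_{ii} = \ell_i\deg(\sigma_i,\sigma_i)/2$ gives $\val(E_{ii}) = \val(\ell_i) + \val(\deg(\sigma_i,\sigma_i)) - 1 \ge \val(\ell_i)$, and summing yields $\val(\sum_j E_{jj}) \ge \min_j \val(\ell_j)$. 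Thus (a) and (b) force every subforest to be compatible. Outside of (a) and (b), some $\ell_{i^*}$ is even while $\deg(\sigma_{i^*},\sigma_{i^*})$ is odd, and then the singleton $\{\sigma_{i^*}\}$ is a $0$-edge subforest with $\val(E_{i^*i^*}) = \val(\ell_{i^*}) - 1 < \val(\ell_{i^*})$, so incompatible subforests exist and the inequality $e > k^*$ becomes precisely the inequality in (c).

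The main obstacle is the positivity/non-cancellation step in the second paragraph: the ``only if'' direction requires the pole order at $z=-1$ to be \emph{exactly} $k^*+1$ rather than merely at most $k^*+1$, and pinning this down needs a uniform-sign argument across all incompatible subforests of maximal size, which follows from $A_k(1)=k!>0$ together with $\vol{\hobox{F}}>0$ but is easy to overlook. Secondary but routine checks, namely the $z=1$ cancellation (order at most $m$ vs. order exactly $m$) and the $\val$ bookkeeping with the factor of $\tfrac{1}{2}$ inside $E_{ii}$, must be executed carefully to line up with \Cref{def:compatibility}.
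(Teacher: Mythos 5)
Your argument is correct and follows the same route as the paper's proof: reduce polynomiality of $\Hstar(Z_\Gamma;z)(\sigma)$ to pole-cancellation at $z=-1$, compute the pole order of $\Ehr(Z_\Gamma^\sigma;z)$ from the $(\sigma,\Gamma)$-incompatible summands of \Cref{cor:graphic zonotope ehrhart series}, and compare against the number of even cycle lengths. Your explicit positivity check---that the leading Laurent coefficient of each maximal incompatible summand at $z=-1$ equals $\vol{\hobox{F}}\cdot k!/2 > 0$, so no cancellation can lower the pole order below $k^*+1$---makes precise a step the paper merely asserts when it states the pole order outright, and your $\val$-bookkeeping in cases (a) and (b) likewise fills in the paper's one-line claim that compatibility always holds in those cases.
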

    
    \begin{proof}
    In case \eqref{item:all odd}, all subforests of $C_\Gamma(\sigma)$ are $(\sigma,\Gamma)$-compatible since $\val(\ell_i)=0$ for all $i$.
    So the Ehrhart series of $Z_\Gamma^\sigma$ does not have a pole at $z=-1$.
    Likewise in case \eqref{item:even even}; here we can also see that the condition of $(\sigma,\Gamma)$-compatibility from \Cref{def:compatibility} is always satisfied.
    
    Now assume \eqref{item:all odd} and \eqref{item:even even} both fail.
    Then we must have some cycle $\sigma_i$ with even length $\ell_i$ where $\deg(\sigma_i,\sigma_i)$ is odd.
    In this case, we are guaranteed to have at least one $(\sigma,\Gamma)$-incompatible subforest $F$ of $C_\Gamma(\sigma)$, since every forest where $\sigma_i$ is alone in a connected component will be $(\sigma,\Gamma)$-incompatible.
    From \Cref{cor:graphic zonotope ehrhart series} we see that the pole of $\Ehr(Z_\Gamma^\sigma;z)$ at $z=-1$ has order $\max_{(\sigma,\Gamma)\text{-incomp. }F}\{m-c(F)+1\}$, which is one more than the maximum number of edges in a $(\sigma,\Gamma)$-incompatible subforest of $C_\Gamma(\sigma)$.
    Since $\prod_{i=1}^m(1-z^{\ell_i})$ has a zero at $z=-1$ of order equal to the number of even $\ell_i$, condition \eqref{item:even odd} is exactly what is needed for cancellation to occur.
    \end{proof}
    
    \begin{corollary}
    $H^*(Z_\Gamma;z)$ is a polynomial if and only if every $\sigma\in\Aut(\Gamma)$ satisfies one of the conditions of \Cref{thm:graphic zonotope polynomial H*}.
    \end{corollary}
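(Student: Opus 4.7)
The plan is to deduce the corollary directly from \Cref{thm:graphic zonotope polynomial H*}, by showing that polynomiality of the $\Hstar$-series over the character ring $R(\Aut(\Gamma))$ can be checked one conjugacy class at a time.

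Writing $\Hstar(Z_\Gamma;z) = \sum_{i \ge 0} \Hstari\, z^i$ with each $\Hstari \in R(\Aut(\Gamma))$ a class function, I would first invoke the elementary fact that a class function on a finite group vanishes precisely when it vanishes on every conjugacy class representative. Consequently, $\Hstari = 0$ in $R(\Aut(\Gamma))$ if and only if $\Hstari(\sigma) = 0$ for every $\sigma \in \Aut(\Gamma)$.

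From here, the forward direction is immediate: if $\Hstar(Z_\Gamma;z)$ is a polynomial of degree at most $d$, then for each $\sigma$ the evaluation $\Hstar(Z_\Gamma;z)(\sigma) = \sum_{i\ge 0} \Hstari(\sigma)\,z^i$ is a polynomial in $z$ of degree at most $d$, so by \Cref{thm:graphic zonotope polynomial H*}, $\sigma$ satisfies one of conditions \eqref{item:all odd}, \eqref{item:even even}, or \eqref{item:even odd}. For the converse, suppose every $\sigma$ satisfies one of those conditions. Then \Cref{thm:graphic zonotope polynomial H*} provides, for each $\sigma$, a degree $d_\sigma$ such that $\Hstari(\sigma) = 0$ for all $i > d_\sigma$. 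Setting $d := \max_{\sigma \in \Aut(\Gamma)} d_\sigma$ is legal because $\Aut(\Gamma)$ is finite, and then for $i > d$ we have $\Hstari(\sigma) = 0$ for every $\sigma$, so $\Hstari = 0$ by the observation above, and $\Hstar(Z_\Gamma;z)$ is a polynomial of degree at most $d$.

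There is essentially no obstacle here: the entire content is encoded in \Cref{thm:graphic zonotope polynomial H*}, and the corollary is simply the observation that, for a finite group $G$, an element of $R(G)[[z]]$ is a polynomial if and only if each of its pointwise evaluations at group elements is a polynomial, a reduction that succeeds precisely because $G$ has only finitely many conjugacy classes.
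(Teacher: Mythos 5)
Your proof is correct, and since the paper states this corollary without an explicit argument, you have supplied exactly the implicit reasoning: $\Hstar(Z_\Gamma;z)$ lies in $R(\Aut(\Gamma))[[z]]$, each coefficient $\Hstari$ is a class function, a class function on a finite group is zero iff its evaluation at every element (equivalently, at representatives of the finitely many conjugacy classes) is zero, and the finiteness of $\Aut(\Gamma)$ gives a uniform degree bound $d = \max_\sigma d_\sigma$ for the converse direction. Combined with the pointwise characterization of \Cref{thm:graphic zonotope polynomial H*}, this is precisely what is needed. One small cosmetic remark: you attribute the bound $d_\sigma$ to \Cref{thm:graphic zonotope polynomial H*}, but the theorem only asserts polynomiality, not a specific degree; the point is simply that any polynomial has finite degree and there are finitely many $\sigma$, so the maximum exists --- your argument already says this, just slightly misattributes where $d_\sigma$ comes from.
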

    
    We conclude this section by focusing in on a particularly simple family of graphs.   
    Consider the path graph $\path_n$ as introduced in \Cref{ex:path graph}.
    Then $\Aut(\path_n)=\Z/2\Z$ for all $n$, but the cycle type of the group generator depends on whether $n$ is even or odd.
    When $n$ is even, the generator of $\Aut(\path_n)$ written in cycle notation is $\sigma=(1\; n)(2\; n-1)\dots(\frac{n}{2}\;\frac{n}{2}+1)$, and when $n$ is odd, the generator is $\sigma=(1\;n)(2\; n-1)\dots (\frac{n-1}{2}\;\frac{n+3}{2})(\frac{n+1}{2})$.
    The corresponding connectivity graphs $C_{\path_n}(\sigma)$ are shown in \Cref{fig:even odd path connectivity}.
    
    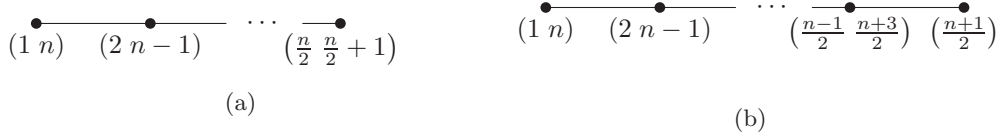
\begin{figure}
        \centering
            \begin{subfigure}{.4\linewidth}
            \begin{tikzpicture}

\coordinate[label=below:$(1\;n)$] (1) at (0,0);
\coordinate[label=below:$(2\;n-1)$] (2) at (1.5,0);
\coordinate (3) at (2.5,0);
\node at (3,0) {$\cdots$};
\coordinate (4) at (3.5,0);
\coordinate[label=below:$\left(\frac{n}{2}\;\frac{n}{2}+1\right)$] (5) at (4,0);
\draw (1) -- (2) -- (3);
\draw (4)--(5);

\fill (1) circle(2pt);
\fill (2) circle(2pt);
\fill (5) circle(2pt);

\end{tikzpicture}
            \caption{}
            \label{fig:patheven}
            \end{subfigure}
            \hspace{1mm}
            \begin{subfigure}{.4\linewidth}
            \begin{tikzpicture}

\coordinate[label=below:$(1\;n)$] (1) at (0,0);
\coordinate[label=below:$(2\;n-1)$] (2) at (1.5,0);
\coordinate (3) at (2.5,0);
\node at (3,0) {$\cdots$};
\coordinate (4) at (3.5,0);
\coordinate[label=below:$\left(\frac{n-1}{2}\;\frac{n+3}{2}\right)$] (5) at (4,0);
\coordinate[label=below:$\left(\frac{n+1}{2}\right)$] (6) at (5.5,0);
\draw (1) -- (2) -- (3);
\draw (4)--(5)--(6);

\fill (1) circle(2pt);
\fill (2) circle(2pt);
\fill (5) circle(2pt);
\fill (6) circle(2pt);

\end{tikzpicture}
            \caption{}
            \label{fig:pathodd}
            \end{subfigure}
        
        \caption{The connectivity graphs $C_{\path_n}(\sigma)$ where $\sigma$ is the generator of $\Z/2\Z$ in the case where $n$ is even \eqref{fig:patheven} and odd \eqref{fig:pathodd}.}
        \label{fig:even odd path connectivity}
    \end{figure}
    
    We first consider the case when $n$ is even.
    Let $F$ be any subforest of $C_{\path_n}(\sigma)$.
    Using formula \eqref{eq:volume F}, we get
        \[
        \vol{\hobox F} = \left(2^{n/2-c(F)}\right)\left(\frac{1}{2^{n/2}}\right)\left(2^{c(F)}\right) = 1.
        \]
    We next assess $(\sigma,\path_n)$-compatibility of $F$.
    Since $\ell_i=2$ for all cycles $\sigma_i$, the left side of inequality \eqref{eq:compatibility} will always be $1$.
    However, whenever $T$ is the connected component of $F$ containing the cycle $(\frac{n}{2}\;\frac{n}{2}+1)$, the right side of inequality \eqref{eq:compatibility} will be $0$ since this particular cycle contains one edge of $\path_n$ and all other cycles have no edges within them.
    This means that no subforest $F$ of $C_{\path_n}(\sigma)$ is $(\sigma,\path_n)$-compatible.
    Hence when $n$ is even, the Ehrhart quasipolynomial of $Z_{\path_n}^\sigma$ is $0$ for odd dilations and $(t+1)^{n/2-1}$ (the Ehrhart polynomial of an integral parallelotope) for even dilations $t$:
        \[
        L_{Z_{\path_n}^\sigma}(t) = \begin{cases}
            (t+1)^{n/2-1}, & t \text{ even}\\
            0, & t \text{ odd}
        \end{cases}.
        \]
    Next, using \Cref{cor:graphic zonotope ehrhart series} we get
        \begin{align*}
        \Ehr_{Z_{\path_n}^\sigma}(z) &= \sum_{t=0}^\infty (2t+1)^{n/2-1}\cdot z^{2t}\\
        &= \sum_{\substack{\text{subforest } F\\\text{of } C_{\path_n}(\sigma)}} \frac{2^{n/2-c(F)}\cdot A_{n/2-c(F)}(z^2)}{(1-z^2)^{n/2-c(F)+1}}\\
        &= \frac{\displaystyle{\sum_{k=0}^{\frac{n}{2}-1} \binom{\frac{n}{2}-1}{k}\cdot 2^k\cdot (1-z^2)^{n/2-k-1}\cdot A_k(z^2) }}{(1-z^2)^{n/2}},
        \end{align*}
    and we can see directly from this Ehrhart series that $\Hstar(Z_{\path_n};z)$ is a polynomial when $n$ is even. 
    
    Now suppose $n$ is odd.
    As in the even case, we can use \eqref{eq:volume F} to find that $\vol{\hobox{F}}$ is again $1$ for all subforests $F$ of $C_{\path_n}(\sigma)$.
    However, unlike the even case, we have $\deg(\sigma_j,\sigma_j)=0$ for all cycles $\sigma_j$ of $\sigma$.
    This means that the half-integral shift of equation \eqref{eq:gzmink} disappears in this case, and we are left with an integral polytope $Z_{\path_n}^\sigma$.
    Since all the fixed polytopes are integral in this case, the $\Hstar$-series is guaranteed to be a polynomial.
    
    Let us go further and show that the $\Hstar$-polynomial is effective when $n$ is odd.
    The fixed polytopes $Z_{\path_n}^{\id}$ and $Z_{\path_n}^\sigma$ are both lattice parallelotopes with relative volume $1$ and dimensions $n-1$ and $\frac{n-1}{2}$, respectively.
    So we get
        \begin{align*}
        \EE(Z_{\path_n};z)(\id) &= \frac{\frac{1}{z}A_{n-1}(z)}{(1-z)^n} \\
        \EE(Z_{\path_n};z)(\sigma) &= \frac{\frac{1}{z}A_{(n-1)/2}(z)}{(1-z)^{(n+1)/2}} = \frac{(1+z)^{(n-1)/2}\frac{1}{z}A_{(n-1)/2}(z)}{(1-z^2)^{(n-1)/2}(1-z)},
        \end{align*}
    where we have arranged both series to have the desired denominator $\det(I-\rho\cdot z)$.
    Notice that the numerators of both rational functions are polynomials of degree $n-2$ with positive integer coefficients, and they are $\Hstar(Z_{\path_n};z)$ evaluated at the elements of $\Z/2\Z$.
    Write $\Hstar(Z_{\path_n};z) = \sum_{k=0}^{n-2}H_k^\ast z^k$ for virtual characters $H_k^\ast\in R(\Z/2\Z)$.
    For each $k$, there exist $a_k,b_k\in \R$ such that $H_k^\ast = a_k\chitriv + b_k\chialt$.
    For a polynomial $f(z)$, let $[z^k]f(z)$ denote the coefficient of $z^k$ in $f(z)$.
    Plugging in $\id$ and $\sigma$ to $\Hstar(Z_{\path_n};z)$ gives
        \begin{align*}
            a_k+b_k &= [z^k]\left(\frac{1}{z}A_{n-1}(z)\right)\\
            a_k-b_k &= [z^k]\left((1+z)^{(n-1)/2}\frac{1}{z}A_{\frac{n-1}{2}}(z)\right).
        \end{align*}
    Summing these two equations, it is clear to see that $a_k>0$, which supports \cite[Conjecture~12.4]{Stapledon}.
    Subtracting the second equation from the first, we see that $b_k\ge0$ if and only if the right side of the first equation is greater than or equal to the right side of the second equation.
    This is indeed true, and we prove it combinatorially.
    
    \begin{lemma}
    For $0\le k\le n-2$, the coefficient of $z^k$ in $\frac{1}{z}A_{n-1}(z)$ is greater than or equal to the coefficient of $z^k$ in $(1+z)^{(n-1)/2}\frac{1}{z}A_{(n-1)/2}(z)$.
    \end{lemma}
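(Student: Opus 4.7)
The plan is to reduce the inequality to a combinatorial injection. Setting $m = (n-1)/2$, so that $n-1 = 2m$, the coefficient of $z^k$ on the left is the Eulerian number $A_{2m,k}$, counting permutations in $S_{2m}$ with $k$ ascents. Expanding the right-hand side as a product of generating functions yields
\[
[z^k]\bigl((1+z)^m \cdot \tfrac{1}{z}A_m(z)\bigr) \;=\; \sum_{j=0}^{k}\binom{m}{k-j}\, A_{m,j},
\]
which counts pairs $(\alpha, S) \in S_m \times 2^{[m]}$ with $\operatorname{asc}(\alpha) + |S| = k$. It therefore suffices to inject these pairs into $\{\pi \in S_{2m} : \operatorname{asc}(\pi) = k\}$.

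The injection I will use decomposes $[2m]$ into consecutive pairs of positions and uses the subset $S$ to orient each pair. Given $(\alpha, S)$, define $\pi = \Phi(\alpha, S) \in S_{2m}$ by setting, for each $i \in [m]$,
\[
(\pi(2i-1),\, \pi(2i)) \;=\;
\begin{cases}
(2\alpha(i)-1,\; 2\alpha(i)), & i \in S, \\
(2\alpha(i),\; 2\alpha(i)-1), & i \notin S.
\end{cases}
\]
The pairs $\{2\alpha(i)-1,\, 2\alpha(i)\}$ for $i \in [m]$ partition $[2m]$, so $\Phi$ lands in $S_{2m}$, and one recovers $\alpha(i) = \lceil \pi(2i-1)/2 \rceil$ along with membership in $S$ from whether $\pi(2i-1) < \pi(2i)$; hence $\Phi$ is injective.

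The remaining step is to verify $\operatorname{asc}(\pi) = \operatorname{asc}(\alpha) + |S|$. The within-block ascent at position $2i-1$ occurs precisely when $i \in S$, contributing $|S|$ ascents in total. For the between-block ascent at position $2i$ (with $i \in [m-1]$), a short case analysis over whether each of $i, i+1$ lies in $S$ shows that
\[
\pi(2i+1) - \pi(2i) \;=\; 2(\alpha(i+1) - \alpha(i)) + \epsilon
\]
for some $\epsilon \in \{-1, 0, +1\}$; since $|\alpha(i+1) - \alpha(i)| \ge 1$, the difference has the same sign as $\alpha(i+1) - \alpha(i)$, so the between-block ascents contribute exactly $\operatorname{asc}(\alpha)$. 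The total is therefore $\operatorname{asc}(\pi) = |S| + \operatorname{asc}(\alpha) = k$, as required.

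The hard part will be spotting the right injection. A tempting first move is to try to show that $(1+z)^m \tfrac{1}{z}A_m(z)$ is the ascent generating polynomial for centrally symmetric permutations of $[2m]$, which would give the inequality immediately since they form a subset of $S_{2m}$; but the natural signed-permutation bijection does not preserve ascents, and establishing this identity directly is more delicate. The ``doubling'' construction above sidesteps the symmetry issue by using the rigid block structure $\{2\alpha(i)-1, 2\alpha(i)\}$, reducing the verification to a quick four-case sign computation.
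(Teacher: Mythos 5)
Your proof is correct and takes a genuinely different route from the paper's. The paper starts from the same expansion $\sum_{i=0}^k \binom{m}{k-i}A_{m,i}$, but then applies Pascal's rule to the binomial coefficient, splits the sum into two parts, and interprets each part as (an over-count of) permutations in $S_{2m}$ with $k$ ascents in which $\{1,\dots,m\}$ occupies either the first $m$ positions or the last $m$ positions; the inequality follows because this is a subset of all permutations with $k$ ascents. Your argument instead exhibits a single explicit injection $\Phi\colon (\alpha,S)\mapsto\pi$ built by ``doubling'': pairing positions $\{2i-1,2i\}$ with value-blocks $\{2\alpha(i)-1,2\alpha(i)\}$ and using $S$ to orient each block. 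The four-case check that $\pi(2i+1)-\pi(2i)=2(\alpha(i+1)-\alpha(i))+\epsilon$ with $|\epsilon|\le 1$ is correct, and since $|\alpha(i+1)-\alpha(i)|\ge 1$ the sign is preserved, giving exactly $\operatorname{asc}(\alpha)$ between-block ascents and $|S|$ within-block ascents. This is arguably cleaner than the paper's proof: it avoids the Pascal split entirely, gives a fully explicit injection with a transparent inverse $\alpha(i)=\lceil\pi(2i-1)/2\rceil$, and identifies the image as the concrete subset of $S_{2m}$ whose value-pairs $\{\pi(2i-1),\pi(2i)\}$ are consecutive, whereas the paper's comparison to ``permutations where $\{1,\dots,m\}$ occurs first or last'' involves a mild over-count that it leaves to the reader to accept. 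Both are valid; yours trades the paper's binomial-identity manipulation for a single structural bijection.
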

    \begin{proof}
    The coefficient of $z^k$ in $\frac{1}{z}A_{n-1}(z)$ is the Eulerian number $A_{n-1,k}$, the number of permutations of the set $\{1,2,\dots,n-1\}$ with exactly $k$ ascents.
    On the other hand, we have
        \begin{align}
            \nonumber[z^k]\left( (1+z)^{(n-1)/2}\frac{1}{z}A_{\frac{n-1}{2}}(z) \right) &= [z^k] \left(\left( \sum_{i=0}^{(n-1)/2} \binom{\frac{n-1}{2}}{i}z^i \right)\left( \sum_{j=0}^{(n-3)/2} A_{\frac{n-1}{2},j}z^j \right)\right)\\
            \nonumber&= \sum_{i=0}^k \binom{\frac{n-1}{2}}{k-i}A_{\frac{n-1}{2},i}\\
            \label{eq:binomial}&=
            \sum_{i=0}^k A_{\frac{n-1}{2},i}\binom{\frac{n-1}{2}-1}{k-i-1} + 
            \sum_{i=0}^k A_{\frac{n-1}{2},i}\binom{\frac{n-1}{2}-1}{k-i}\\
            \nonumber&\le A_{n-1,k}.
        \end{align}
        
    To see that \eqref{eq:binomial} $\le A_{n-1,k}$, we observe that \eqref{eq:binomial} counts some, but not all, of the permutations in $S_{n-1}$ with exactly $k$ ascents.
    The first summand of \eqref{eq:binomial} deals with the case where the elements $1,2,\dots,\frac{n-1}{2}$ come before the remaining elements in the permutation.
    The Eulerian number $A_{(n-1)/2,i}$ counts the number of permutations of $\{1,2,\dots,\frac{n-1}{2}\}$ with exactly $i$ ascents; we are then guaranteed an ascent at position $\frac{n-1}{2}$, and then the binomial coefficient $\binom{(n-1)/2-1}{k-i-1}$ gives the number of ways to choose the \emph{positions} of the remaining $k-i-1$ ascents from $\{(n+1)/2, (n+3)/2,\dots, n-2\}$ (which is less than or equal to the number of ways to complete the permutation with ascents in those positions).
    Likewise, the second summand of \eqref{eq:binomial} deals with the case where the elements $\frac{n+1}{2}, \frac{n+3}{2},\dots, n-1$ come before the remaining elements in the permutation; in this case we are guaranteed to have no ascent at position $\frac{n-1}{2}$.
    Using this reasoning, it is clear that \eqref{eq:binomial} is less than or equal to the number of permutations in $S_{n-1}$ with exactly $k$ ascents where the elements $\{1,2,\dots,\frac{n-1}{2}\}$ occur either in the first $\frac{n-1}{2}$ positions or in the last $\frac{n-1}{2}$ positions.
    This in turn is clearly less than or equal to $A_{n-1,k}$.
    \end{proof}
    
    Since $a_k,b_k\ge0$ for all $k$, the $\Hstar$-polynomial of $Z_{\path_n}$ is effective with respect to the action of $\Z/2\Z$ when $n$ is odd.

    \subsection{Invariant Half-Open Decompositions}\label{sec:invariant decomps} 
    
    In this section, we use $G$-invariant triangulations to calculate the equivariant Ehrhart series. 
    More precisely, the triangulations are half-open decompositions. 
    For background definitions in discrete geometry see
    \cite{Ziegler}.
    
    \begin{definition}\label{def:halfopen}
    A \emph{half-open decomposition} of a lattice polytope $P$ with lattice triangulation $\mathcal T$ is a partition of the face poset of $\mathcal T$ into intervals such that the empty set is not in its own class.
    We use $\mathcal T ^{[\, )}$ to refer to the half-open decomposition, i.e., the triangulation together with the partition of the face poset.
    We identify a simplex $S$ in $\mathcal T$ with its set of vertices when convenient.
    \end{definition}
    
    \Cref{def:halfopen} is closely related to partitionability; the face poset of a pure simplicial complex is \emph{partitionable} if it can be partitioned into disjoint intervals such that each maximal element is a facet of the complex, see \cite[Chapter 2]{stanley_green}.
    In this case, it is easy to read off the $h$-vector of the simplicial complex by recording the heights of the minimal simplices in the intervals.
    \Cref{def:halfopen} generalizes the typical method for creating a half-open decomposition of (the cone over) a triangulated polytope by choosing a generic vector in the interior of one simplex and then taking ``sunny-side'' of each face
    ; one asks if it is possible to remain in a facet of the triangulation when walking in the direction of the vector
    , see 
     \cite[Secton 5.3]{BeckSanyal} or \cite[Section 3.4]{latticepolytopeslectures} for details.
Using a generic vector in the typical method creates a shelling (and more weakly a partition) of the simplicial complex.
However, this typical method can fail to create symmetric half-open decompositions with respect to the group action, and does not easily allow for simplices of varying dimension in the triangulation, highlighting the usefulness of the more general half-open decompositions from \Cref{def:halfopen}. 
    
    \begin{figure}[h]
        \begin{center}
        \begin{tikzpicture}[scale = .6]
	\coordinate[label=below:{\texttt{a}=(1,2,3)}] (a) at (0,0) {};
    \coordinate[label=right:{\texttt{c}=(1,3,2)}] (c) at (2,1) {};
    \node[] (c') at (1.8,1.4) {};
    \node[] (c'') at (1.8,.9) {};
    \coordinate[label=left: {\texttt{b}=(2,1,3)}] (b) at (-2,1) {};
    \node[] (b') at (-1.8,1.4) {};
    \node[] (b'') at (-1.8,.9) {};
    \coordinate[label=left: {\texttt{e}=(3,1,2)}] (e) at (-2,3) {};
    \coordinate[label=right:{\texttt{d}=(2,3,1)}] (d) at (2,3) {};
    \coordinate[label=above:{\texttt{f}=(3,2,1)}] (f) at (0,4) {};
    \node[] (f') at (0.2,3.8) {};
    \node[] (f'') at (-0.2,3.8) {};
    \coordinate[] (g) at (0,2) {};
    
    \draw[color=white ] (b) -- (f) -- (e) -- (b);
    \draw[color=white] (c) -- (f) -- (d) -- (c);
    \draw[color=white ] (a) -- (b) --(c) -- (a);
    \draw[line width=1.5pt,shorten <= 6pt, shorten >= 6pt] (b) -- (a) -- (c) ;
    \draw[line width=1.5pt,shorten <= 6pt, shorten >= 6pt] (b) -- (e) -- (f) ;
    \draw[line width=1.5pt,shorten <= 6pt, shorten >= 6pt] (f) -- (d) -- (c) ;
    \draw[line width=1.5pt,color = black] (b) -- (f) -- (c) -- (b);
\end{tikzpicture}
\begin{tikzpicture} 
    [scale=.8]%
    \Vertex[style={minimum size=1.0cm,draw=black,fill=pink,text=clv0,shape=circle},LabelOut=false,L=\hbox{$\emptyset$},x=0cm,y=0cm]{v0}
    \Vertex[style={minimum size=1.0cm,draw=black,fill=pink,text=clv1,shape=circle},LabelOut=false,L=\hbox{$\text{\texttt{f}}$},x=1cm,y=2cm]{v1}
    \Vertex[style={minimum size=1.0cm,draw=black,fill=pink,text=clv2,shape=circle},LabelOut=false,L=\hbox{$\text{\texttt{e}}$},x=2cm,y=2cm]{v2}
    \Vertex[style={minimum size=1.0cm,draw=black,fill=pink,text=clv3,shape=circle},LabelOut=false,L=\hbox{$\text{\texttt{ef}}$},x=1cm,y=4cm]{v3}
    \Vertex[style={minimum size=1.0cm,draw=black,fill=pink,text=clv4,shape=circle},LabelOut=false,L=\hbox{$\text{\texttt{d}}$},x=3cm,y=2cm]{v4}
    \Vertex[style={minimum size=1.0cm,draw=black,fill=pink,text=clv5,shape=circle},LabelOut=false,L=\hbox{$\text{\texttt{df}}$},x=4cm,y=4cm]{v5}
    \Vertex[style={minimum size=1.0cm,draw=black,fill=pink,text=clv6,shape=circle},LabelOut=false,L=\hbox{$\text{\texttt{cf}}$},x=0cm,y=4cm]{v6}
    \Vertex[style={minimum size=1.0cm,draw=black,fill=pink,text=clv7,shape=circle},LabelOut=false,L=\hbox{$\text{\texttt{c}}$},x=-1cm,y=2cm]{v7}
    \Vertex[style={minimum size=1.0cm,draw=black,fill=pink,text=clv8,shape=circle},LabelOut=false,L=\hbox{$\text{\texttt{cd}}$},x=3cm,y=4cm]{v8}
    \Vertex[style={minimum size=1.0cm,draw=black,fill=pink,text=clv9,shape=circle},LabelOut=false,L=\hbox{$\text{\texttt{cdf}}$},x=3cm,y=6cm]{v9}
    \Vertex[style={minimum size=1.0cm,draw=black,fill=pink,text=clv10,shape=circle},LabelOut=false,L=\hbox{$\text{\texttt{b}}$},x=-2cm,y=2cm]{v10}
    \Vertex[style={minimum size=1.0cm,draw=black,fill=pink,text=clv11,shape=circle},LabelOut=false,L=\hbox{$\text{\texttt{bc}}$},x=-2cm,y=4cm]{v11}
    \Vertex[style={minimum size=1.0cm,draw=black,fill=pink,text=clv12,shape=circle},LabelOut=false,L=\hbox{$\text{\texttt{be}}$},x=2,y=4cm]{v12}
    \Vertex[style={minimum size=1.0cm,draw=black,fill=pink,text=clv13,shape=circle},LabelOut=false,L=\hbox{$\text{\texttt{bf}}$},x=-1,y=4cm]{v13}
    \Vertex[style={minimum size=1.0cm,draw=black,fill=pink,text=clv14,shape=circle},LabelOut=false,L=\hbox{$\text{\texttt{bcf}}$},x=-1cm,y=6cm]{v14}
    \Vertex[style={minimum size=1.0cm,draw=black,fill=pink,text=clv15,shape=circle},LabelOut=false,L=\hbox{$\text{\texttt{bef}}$},x=1cm,y=6cm]{v15}
    \Vertex[style={minimum size=1.0cm,draw=black,fill=pink,text=clv16,shape=circle},LabelOut=false,L=\hbox{$\text{\texttt{a}}$},x=-3cm,y=2cm]{v16}
    \Vertex[style={minimum size=1.0cm,draw=black,fill=pink,text=clv17,shape=circle},LabelOut=false,L=\hbox{$\text{\texttt{ab}}$},x=-4cm,y=4cm]{v17}
    \Vertex[style={minimum size=1.0cm,draw=black,fill=pink,text=clv18,shape=circle},LabelOut=false,L=\hbox{$\text{\texttt{ac}}$},x=-3cm,y=4cm]{v18}
    \Vertex[style={minimum size=1.0cm,draw=black,fill=pink,text=clv19,shape=circle},LabelOut=false,L=\hbox{$\text{\texttt{abc}}$},x=-3cm,y=6.0cm]{v19}
    \Edge[lw=1pt,style={color=black},](v0)(v16)
    \Edge[lw=1pt](v0)(v1)
    \Edge[lw=1pt](v0)(v2)
    \Edge[lw=1pt](v0)(v4)
    \Edge[lw=1pt](v0)(v7)
    \Edge[lw=1pt](v0)(v10)
    \Edge[lw=1pt](v1)(v3)
    \Edge[lw=1pt](v1)(v5)
    \Edge[lw=1pt](v1)(v6)
    \Edge[lw=1pt](v1)(v13)
    \Edge[lw=1pt](v2)(v3)
    \Edge[lw=1pt](v2)(v12)
    \Edge[lw=1pt](v3)(v15)
    \Edge[lw=1pt](v4)(v5)
    \Edge[lw=1pt](v4)(v8)
    \Edge[lw=1pt](v5)(v9)
    \Edge[lw=1pt](v7)(v6)
    \Edge[lw=1pt](v6)(v9)
    \Edge[lw=1pt](v6)(v14)
    \Edge[lw=1pt](v7)(v18)
    \Edge[lw=1pt](v7)(v8)
    \Edge[lw=1pt](v7)(v11)
    \Edge[lw=1pt](v8)(v9)
    \Edge[lw=1pt](v10)(v17)
    \Edge[lw=1pt](v10)(v11)
    \Edge[lw=1pt](v10)(v12)
    \Edge[lw=1pt](v10)(v13)
    \Edge[lw=1pt](v11)(v19)
    \Edge[lw=1pt](v11)(v14)
    \Edge[lw=1pt](v12)(v15)
    \Edge[lw=1pt](v13)(v14)
    \Edge[lw=1pt](v13)(v15)
    \Edge[lw=1pt](v16)(v17)
    \Edge[lw=1pt](v16)(v18)
    \Edge[lw=1pt](v17)(v19)
    \Edge[lw=1pt](v18)(v19)
    \Edge[lw=0.2cm,style={color=purple!80, opacity=.5}](v16)(v17)
    \Edge[lw=0.2cm,style={color=purple!80, opacity=.5}](v16)(v18)
    \Edge[lw=0.2cm,style={color=purple!80, opacity=.5}](v17)(v19)
    \Edge[lw=0.2cm,style={color=purple!80, opacity=.5}](v18)(v19)
    \Edge[lw=0.2cm,style={opacity=.5,color=purple!80},](v0)(v10)
    \Edge[lw=0.2cm,style={opacity=.5,color=purple!80},](v0)(v7)
    \Edge[lw=0.2cm,style={opacity=.5,color=purple!80},](v0)(v1)
    \Edge[lw=0.2cm,style={opacity=.5,color=purple!80},](v7)(v11)
    \Edge[lw=0.2cm,style={opacity=.5,color=purple!80},](v10)(v11)
    \Edge[lw=0.2cm,style={opacity=.5,color=purple!80},](v1)(v13)
    \Edge[lw=0.2cm,style={opacity=.5,color=purple!80},](v10)(v13)
    \Edge[lw=0.2cm,style={opacity=.5,color=purple!80},](v1)(v6)
    \Edge[lw=0.2cm,style={opacity=.5,color=purple!80},](v6)(v7)
    \Edge[lw=0.2cm,style={opacity=.5,color=purple!80},](v6)(v14)
    \Edge[lw=0.2cm,style={opacity=.5,color=purple!80},](v11)(v14)
    \Edge[lw=0.2cm,style={opacity=.5,color=purple!80},](v13)(v14)
    \Edge[lw=0.2cm,style={opacity=.5,color=purple!80},](v2)(v12)
    \Edge[lw=0.2cm,style={opacity=.5,color=purple!80},](v2)(v3)
    \Edge[lw=0.2cm,style={opacity=.5,color=purple!80},](v12)(v15)
    \Edge[lw=0.2cm,style={opacity=.5,color=purple!80},](v3)(v15)
    \Edge[lw=0.2cm,style={opacity=.5,color=purple!80},](v4)(v5)
    \Edge[lw=0.2cm,style={opacity=.5,color=purple!80},](v4)(v8)
    \Edge[lw=0.2cm,style={opacity=.5,color=purple!80},](v5)(v9)
    \Edge[lw=0.2cm,style={opacity=.5,color=purple!80},](v8)(v9)
\end{tikzpicture}
        \end{center}
        \caption{A half-open decomposition of the permutahedron $\Pi_3$ determined by a triangulation (top image) and a partition of the face poset of the triangulation into intervals shown via its poset diagram (bottom image)}
        \label{fig:exampledecompositions}
    \end{figure}
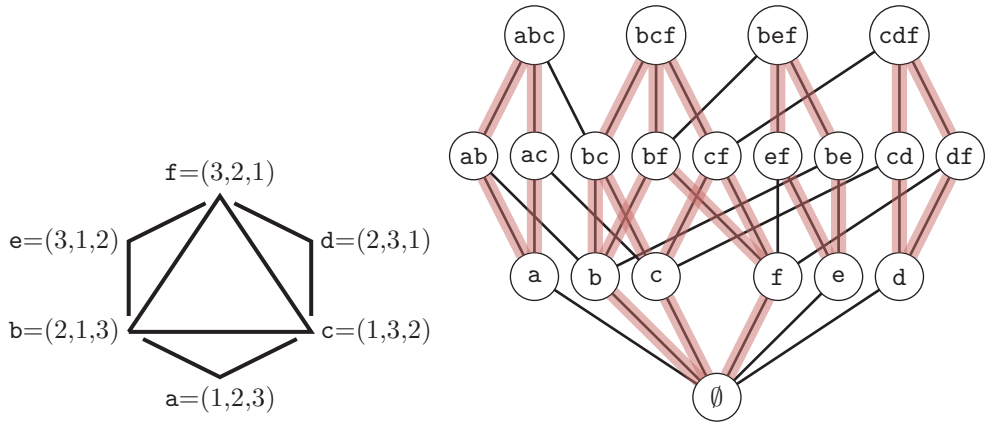
    
    \begin{example}\label{example:pi3}
    The two-dimensional permutahedron $\Pi_3 \subset \R^3$ obtained as the convex hull of the permutations of the coordinates $(1,2,3)$ admits a half-open decomposition. Figure \ref{fig:exampledecompositions} shows a triangulation of $\Pi_3$ and a partition of the face poset of the triangulation. The broken edges in the triangulation indicate which faces are open and which are closed in the cone over $\Pi_3$ according to the partition.
    \end{example}
    
    \begin{example}\label{example:square}
    The 0/1 square admits a half-open decomposition.
    Triangulate the square into two triangles: 
    $\Delta(abc) = \conv(\{(0,1),(0,0),(1,1)\})$ and 
    $\Delta(bcd) = \conv(\{(0,0),(1,1),(1,0)\})$, as shown on the left in Figure \ref{fig:exampledecomposition2}.
    Partition the face poset into three intervals: $[a,abc],[\emptyset,bc],[d, bcd]$ to create the half-open decomposition, as shown on the right in Figure \ref{fig:exampledecomposition2}.
    The broken edges in the triangulation shown in Figure~\ref{fig:exampledecomposition2} indicate which faces in the cone over the triangulated square are open according to the partition of the face poset.
    The maximal simplex $bc$ in the interval $[\emptyset,bc]$ is not full-dimensional, which emphasizes the additional flexibility offered by these half-open decompositions.
    
    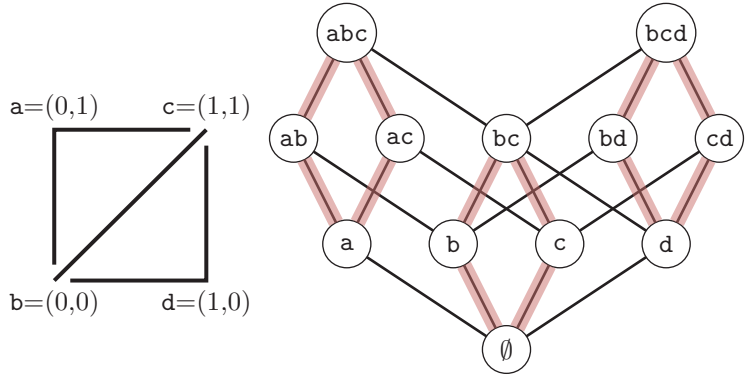
\begin{figure}
        \begin{center}
        \begin{tikzpicture}[scale = 2]
    \coordinate[label=below:{\texttt{b}=(0,0)}] (b) at (0,0) {};
    \coordinate[label=above:{\texttt{a}=(0,1)}] (a) at (0,1) {};
    \coordinate[label=above: {\texttt{c}=(1,1)}] (c) at (1,1) {};
    \coordinate[label=below: {\texttt{d}=(1,0)}] (d) at (1,0) {};
    \coordinate[label=below: {$\,$}] (e) at (0, -.5) {};
    \draw[line width=1.5pt,color = black] (b) -- (c);
    \draw[line width=1.5pt,shorten <= 6pt, shorten >= 6pt] (b) -- (a) -- (c) ;
    \draw[line width=1.5pt,shorten <= 6pt, shorten >= 6pt] (b) -- (d) -- (c) ;
\end{tikzpicture}
\begin{tikzpicture}[scale = .7]
    \Vertex[style={minimum size=1.0cm,draw=black,fill=pink,text=clv0,shape=circle},LabelOut=false,L=\hbox{$\emptyset$},x=0cm,y=0cm]{v0}
    \Vertex[style={minimum size=1.0cm,draw=black,fill=pink,text=clv1,shape=circle},LabelOut=false,L=\hbox{$\text{\texttt{a}}$},x=-3cm,y=2cm]{a}
    \Vertex[style={minimum size=1.0cm,draw=black,fill=pink,text=clv2,shape=circle},LabelOut=false,L=\hbox{$\text{\texttt{b}}$},x=-1cm,y=2cm]{b}
    \Vertex[style={minimum size=1.0cm,draw=black,fill=pink,text=clv3,shape=circle},LabelOut=false,L=\hbox{$\text{\texttt{c}}$},x=1cm,y=2cm]{c}
    \Vertex[style={minimum size=1.0cm,draw=black,fill=pink,text=clv4,shape=circle},LabelOut=false,L=\hbox{$\text{\texttt{d}}$},x=3cm,y=2cm]{d}
    \Vertex[style={minimum size=1.0cm,draw=black,fill=pink,text=clv5,shape=circle},LabelOut=false,L=\hbox{$\text{\texttt{ab}}$},x=-4cm,y=4cm]{ab}
    \Vertex[style={minimum size=1.0cm,draw=black,fill=pink,text=clv6,shape=circle},LabelOut=false,L=\hbox{$\text{\texttt{ac}}$},x=-2cm,y=4cm]{ac}
    \Vertex[style={minimum size=1.0cm,draw=black,fill=pink,text=clv7,shape=circle},LabelOut=false,L=\hbox{$\text{\texttt{bc}}$},x=0cm,y=4cm]{bc}
    \Vertex[style={minimum size=1.0cm,draw=black,fill=pink,text=clv8,shape=circle},LabelOut=false,L=\hbox{$\text{\texttt{bd}}$},x=2cm,y=4cm]{bd}
    \Vertex[style={minimum size=1.0cm,draw=black,fill=pink,text=clv9,shape=circle},LabelOut=false,L=\hbox{$\text{\texttt{cd}}$},x=4cm,y=4cm]{cd}
    \Vertex[style={minimum size=1.0cm,draw=black,fill=pink,text=clv10,shape=circle},LabelOut=false,L=\hbox{$\text{\texttt{abc}}$},x=-3cm,y=6cm]{abc}
    \Vertex[style={minimum size=1.0cm,draw=black,fill=pink,text=clv11,shape=circle},LabelOut=false,L=\hbox{$\text{\texttt{bcd}}$},x=3cm,y=6cm]{bcd}
    \Edge[lw=1pt,style={color=black},](v0)(a)
    \Edge[lw=1pt](v0)(b)
    \Edge[lw=1pt](v0)(c)
    \Edge[lw=1pt](v0)(d)
    \Edge[lw=1pt](a)(ab)
    \Edge[lw=1pt](a)(ac)
    \Edge[lw=1pt](b)(ab)
    \Edge[lw=1pt](b)(bc)
    \Edge[lw=1pt](b)(bd)
    \Edge[lw=1pt](c)(ac)
    \Edge[lw=1pt](c)(bc)
    \Edge[lw=1pt](c)(cd)
    \Edge[lw=1pt](d)(bc)
    \Edge[lw=1pt](d)(bd)
    \Edge[lw=1pt](d)(cd)
    \Edge[lw=1pt](ab)(abc)
    \Edge[lw=1pt](ac)(abc)
    \Edge[lw=1pt](bc)(abc)
    \Edge[lw=1pt](bc)(bcd)
    \Edge[lw=1pt](bd)(bcd)
    \Edge[lw=1pt](cd)(bcd)
    \Edge[lw=0.2cm,style={color=purple!80, opacity=.5}](a)(ab)
    \Edge[lw=0.2cm,style={color=purple!80, opacity=.5}](a)(ac)
    \Edge[lw=0.2cm,style={color=purple!80, opacity=.5}](ab)(abc)
    \Edge[lw=0.2cm,style={color=purple!80, opacity=.5}](ac)(abc)
    \Edge[lw=0.2cm,style={opacity=.5,color=purple!80},](v0)(b)
    \Edge[lw=0.2cm,style={opacity=.5,color=purple!80},](v0)(c)
    \Edge[lw=0.2cm,style={opacity=.5,color=purple!80},](b)(bc)
    \Edge[lw=0.2cm,style={opacity=.5,color=purple!80},](c)(bc)
    \Edge[lw=0.2cm,style={opacity=.5,color=purple!80},](d)(bd)
    \Edge[lw=0.2cm,style={opacity=.5,color=purple!80},](d)(cd)
    \Edge[lw=0.2cm,style={opacity=.5,color=purple!80},](bd)(bcd)
    \Edge[lw=0.2cm,style={opacity=.5,color=purple!80},](cd)(bcd)
\end{tikzpicture}
        \end{center}
        \caption{A half-open decomposition of the 0/1 square determined by a triangulation (left image) and a partition of the face poset of the triangulation into intervals shown via its poset diagram (right image)}
        \label{fig:exampledecomposition2}
    \end{figure}
    
    \end{example}

    \begin{definition}
    Let $P \subset \R^d$ be a lattice polytope, and let $I = [\underline S, \overline S]$ be an interval in a half-open decomposition $\mathcal T^{[ \, )}$ of $P$.
    The \emph{half-open cone over the interval$~I$}, denoted $\Cone(I)$, is the set of points $\bx \in \R^{d +1}$ such that:
        \begin{equation*}
        \bx = \sum_{\bv_i \in \underline S} \lambda_i (\bv_i,1) + \sum_{\bv_j \in \overline S \setminus \underline S} \mu_j (\bv_j,1),  \text{ where }  \lambda_i \in \R_{>0} \text{ and } \mu_j \in \R_{\geq 0} 
        .
        \end{equation*}
    \end{definition}
    
    \noindent
    \Cref{prop:disjoint_union} and \Cref{prop:conedecomp} below are similar in spirit to \cite[Propositions 3.22, 3.27]{latticepolytopeslectures}.
    
    \begin{proposition}\label{prop:disjoint_union}
    The cone over a lattice polytope $P$ with half-open decomposition~$\mathcal T^{[\,)}$ is equal to the disjoint union of half-open cones over the intervals of $~\mathcal T^{[\,)}$:
        $$
        \Cone(P) = \bigsqcup_{I \in \mathcal T^{[\,)}} \Cone(I ).
        $$
    \end{proposition}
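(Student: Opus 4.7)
The plan is to establish a bijection between points of $\Cone(P)$ and intervals of $\mathcal T^{[\,)}$ via the ``support simplex'' of a point. Since $\mathcal T$ triangulates $P$, its faces partition $P$ by relative interior, so conically every $\bx \in \Cone(P)$ lies in the relative interior of $\Cone(S_\bx)$ for a unique simplex $S_\bx \in \mathcal T$ (taking $S_\bx = \emptyset$ when $\bx = \bzero$). Equivalently, $\bx$ has a unique expansion $\bx = \sum_{\bv \in S_\bx} c_\bv (\bv, 1)$ with all $c_\bv > 0$; uniqueness comes from the linear independence of $\{(\bv, 1) : \bv \in S\}$ in $\R^{d+1}$ for any simplex $S \in \mathcal T$.

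For existence, I would take the unique interval $I = [\underline S, \overline S]$ of $\mathcal T^{[\,)}$ whose class contains $S_\bx$, so that $\underline S \subseteq S_\bx \subseteq \overline S$. Extending the expansion above by zero coefficients on the vertices in $\overline S \setminus S_\bx$ then writes $\bx$ with every vertex of $\underline S$ having strictly positive coefficient and every vertex of $\overline S \setminus \underline S$ having nonnegative coefficient, so $\bx \in \Cone(I) \subseteq \Cone(P)$.

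For disjointness, suppose also $\bx \in \Cone(I')$ with $I' = [\underline T, \overline T]$. The witnessing representation expresses $\bx$ using the lifted vertices of $\overline T$; by the above uniqueness, the vertices with strictly positive coefficient must form exactly $S_\bx$, forcing $\underline T \subseteq S_\bx \subseteq \overline T$ and hence $S_\bx \in I'$. Since $\mathcal T^{[\,)}$ partitions the face poset, the interval containing $S_\bx$ is unique, so $I' = I$.

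The main obstacle is the bookkeeping around the edge case $\bx = \bzero$, where $S_\bx = \emptyset$ lies in some interval $I = [\emptyset, \overline S]$ (which has $\overline S \neq \emptyset$ by the hypothesis that $\emptyset$ is not in its own class). Here the first sum in the definition of $\Cone(I)$ is empty so the constraints $\lambda_i > 0$ are vacuous, and one checks that $\bzero \in \Cone(I)$ by taking all $\mu_j = 0$, while $\bzero \notin \Cone(I')$ whenever $\underline T \neq \emptyset$ since that would require a strictly positive $\lambda_{v}$. Otherwise, the argument is a clean combination of the carrier-map property of a triangulation and the uniqueness of barycentric coordinates.
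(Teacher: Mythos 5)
Your proof is correct and follows essentially the same route as the paper: both identify the unique ``carrier'' simplex $S_\bx$ (the minimal face of $\mathcal T$ whose cone contains $\bx$, equivalently the set of vertices appearing with strictly positive coefficient in the unique conical expansion), locate the unique interval containing it, and then use uniqueness of that interval for disjointness, with the separate check at $\bx = \bzero$. The only cosmetic difference is that you make the linear-independence-based uniqueness of the expansion and the carrier-map framing explicit, whereas the paper states the same facts more tersely.
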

    \begin{proof}
    The inclusion $ \Cone(P) \supseteq \bigsqcup_{I \in \mathcal T^{[\,)}} \Cone(I )$ is clear.
    The origin is contained in the half-open cone, $\Cone(I)$, for the unique interval $I = [\emptyset, \overline S]$ that contains the empty set.
    Let $\bz \in \Cone(P)$, $\bz \neq 0$.  
    There exists a minimal simplex $S \in \mathcal T^{[\,)}$ (with respect to inclusion) such that $\bz \in \Cone(S)$ and
        $
        \bz= \sum_{\bv_i \in S}\lambda_i (\bv_i,1), \,\,\,\, \lambda_i \in \R_{>0}.
        $
    The simplex $S$ is contained in a unique interval $I = [\underline S, \overline S]$ of $\mathcal T^{[\,)}$.
    Thus,
        $$
        \bz= \sum_{\bv_i \in S}\lambda_i (\bv_i,1) = \sum_{\bv_i \in \underline{S}} \lambda_i (\bv_i,1) + \sum_{\bv_i \in S \setminus \underline{S}} \lambda_i (\bv_i , 1) + \sum_{\bv_j \in \overline S \setminus S} 0 (\bv_j , 1).
        $$
    This implies that all vertices of $ \underline {S}$ have a coefficient in $\R_{ > 0}$, 
    and all vertices of $\bv \in \overline{S} \setminus \underline{S}$ have a coefficient in $\R_{\geq 0}$.
    Thus, $\bz \in \Cone(I)$.
    It remains to show that the union $\bigsqcup_{I \in \mathcal T^{[\,)}} \Cone(I ) $ is disjoint.
    Suppose $\bz \neq 0$, $\bz \in \Cone(I)$, and $\bz \in \Cone(I')$ for two intervals $I =[\underline S,\overline S]$, $I' =[\underline{S}',\overline{S}']$ of$~\mathcal T^{[\,)}$.
    Then 
        \begin{equation}\label{eqn:zequals}
        \bz = \sum_{\bv_i \in T} \lambda_i (\bv_i,1) , \,\,\,\, \lambda_i \in \R_{>0},
        \end{equation}
    for a \emph{unique} face $T$ of the simplicial complex.
    As $\bz$ is in $\Cone(I)$,
    $$
    \bz = \sum_{\bv_i \in \underline{S}} \lambda_i (\bv_i,1) + \sum_{\bv_j \in \overline{S} \setminus \underline{S}} \mu_j (\bv_j,1),
    $$
    where $\lambda_i \in \R_{>0}$ and $\mu_j \in \R_{\geq 0}$.
    Restricting to summands with strictly positive coefficients expresses $\bz$ as a conical combination of vertices of $T$, since it is an expression of the form \eqref{eqn:zequals}.
    This also shows that $\underline{S} \subset T$.
    Likewise, $\underline {S}' \subseteq T \subseteq \overline {S}'$.
    Thus $T$ is contained in two intervals in the half-open decomposition, and they must be the same. 
    \end{proof}
    
    \begin{definition}
    Let $P \subseteq \R^d$ be a lattice polytope, and let $I = [\underline S, \overline S]$ be an interval in a half-open decomposition $\mathcal T^{[ \, )}$ of $P$. 
    The \emph{half-open fundamental parallelepiped of the interval $I$}, denoted 
    $\hobox (I)$, is the set of points $\bx \in \R^{d +1}$ such that:
        \begin{equation*}
        \bx = \sum_{\bv_i \in \underline S} \lambda_i (\bv_i,1) + \sum_{v_j \in \overline S \setminus \underline S} \mu_j (\bv_j,1),  \text{ where }  \lambda_i \in (0,1] \text{ and }\mu_j \in [0,1) 
        .
        \end{equation*} 
    \end{definition}
    
    For an interval $I$ of a half-open decomposition, $\Bx(I)$ is defined to be the set of lattice points in the half-open fundamental parallelepiped
    $~\hobox (I)$:
        $$
        \Bx(I) = \hobox (I) \cap \Z^{d+1}.
        $$
    Throughout, we use $\Bx(I)_k$ to denote the set of lattice points in the half-open fundamental parallelepiped of $I$ at height $k$, i.e. with last coordinate equal to $k$.
    
    \begin{proposition}\label{prop:conedecomp}
    Let $P$ be a lattice polytope with half-open decomposition $\mathcal T^{[\,)}$.
    Every lattice point $\bx \in \Cone(P)$ can be expressed uniquely as $\bx = \bw + \mathbf y$ for some $\bw = \sum_{\bv_i \in \overline S} \lambda_i (\bv_i,1),$ with $\lambda_i \in \Z_{\geq 0}$  and $\mathbf y \in \Bx(I)$ for some interval $I = [\underline S, \overline S] \in~\mathcal T^{[\,)}$.
    \end{proposition}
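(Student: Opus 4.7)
The plan is to bootstrap on \Cref{prop:disjoint_union} to locate the unique interval containing $\bx$, then within that interval perform a standard integer/fractional decomposition of the coefficients, being careful about the half-open boundaries.

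First, I would apply \Cref{prop:disjoint_union}: since $\bx\in\Cone(P)$, there is a \emph{unique} interval $I=[\underline S,\overline S]\in\mathcal T^{[\,)}$ such that $\bx\in\Cone(I)$. By the definition of $\Cone(I)$, we may write
\[
\bx \;=\; \sum_{\bv_i\in\underline S}\lambda_i(\bv_i,1)\;+\;\sum_{\bv_j\in\overline S\setminus\underline S}\mu_j(\bv_j,1),
\qquad \lambda_i\in\R_{>0},\ \mu_j\in\R_{\ge 0}.
\]
Because $\overline S$ is a simplex of the triangulation, the vectors $\{(\bv,1):\bv\in\overline S\}$ are linearly independent, so the coefficients $(\lambda_i,\mu_j)$ are uniquely determined by $\bx$.

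Next, I would split each coefficient. For $\bv_j\in\overline S\setminus\underline S$, write $\mu_j = b_j + \mu_j'$ with $b_j=\lfloor\mu_j\rfloor\in\Z_{\ge 0}$ and $\mu_j'\in[0,1)$. For $\bv_i\in\underline S$, since $\lambda_i>0$, write $\lambda_i = a_i+\lambda_i'$ with $a_i\in\Z_{\ge 0}$ and $\lambda_i'\in(0,1]$; concretely, $a_i=\lceil\lambda_i\rceil-1$ and $\lambda_i'=\lambda_i-a_i$. Setting
\[
\bw \;=\; \sum_{\bv_i\in\underline S}a_i(\bv_i,1)+\sum_{\bv_j\in\overline S\setminus\underline S}b_j(\bv_j,1),
\qquad
\mathbf y \;=\; \sum_{\bv_i\in\underline S}\lambda_i'(\bv_i,1)+\sum_{\bv_j\in\overline S\setminus\underline S}\mu_j'(\bv_j,1),
\]
we have $\bx=\bw+\mathbf y$, with $\bw$ a nonnegative integer combination of $\{(\bv,1):\bv\in\overline S\}$ and $\mathbf y\in\hobox(I)$ by construction. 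Since $\bx\in\Z^{d+1}$ and $\bw\in\Z^{d+1}$, we get $\mathbf y=\bx-\bw\in\Z^{d+1}$, hence $\mathbf y\in\Bx(I)$.

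For uniqueness, suppose $\bx=\bw'+\mathbf y'$ is another such decomposition, coming from an interval $I'=[\underline{S}',\overline{S}']$. Then $\mathbf y'\in\hobox(I')\subseteq\Cone(I')$, and since $\bw'$ is a nonnegative integer combination of $\{(\bv,1):\bv\in\overline{S}'\}$, adding $\bw'$ keeps the point in $\Cone(I')$ (the strict positivity on $\underline{S}'$ is inherited from $\mathbf y'$). Thus $\bx\in\Cone(I')$, and by the uniqueness clause of \Cref{prop:disjoint_union} we conclude $I'=I$. The coefficients of $\bx$ in the basis $\{(\bv,1):\bv\in\overline S\}$ are uniquely determined, and the split into integer and fractional parts respecting the ranges $(0,1]$ and $[0,1)$ is also unique, so $\bw'=\bw$ and $\mathbf y'=\mathbf y$.

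The only genuine subtlety -- and the step I would flag as the main obstacle in the write-up -- is the treatment of integer values of $\lambda_i$: one must use the convention $\lambda_i'\in(0,1]$ (not $[0,1)$) for the vertices of $\underline S$ so that the fractional part lands in the half-open parallelepiped $\hobox(I)$ associated with the interval $I$ rather than in a smaller interval of $\mathcal T^{[\,)}$; everything else is routine.
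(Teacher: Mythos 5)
Your proof is correct and takes essentially the same approach as the paper: locate the unique interval via Proposition~\ref{prop:disjoint_union}, then split the (unique) barycentric coordinates with the $(0,1]$ convention on $\underline S$ and $[0,1)$ on $\overline S\setminus\underline S$. The paper's write-up only records the existence construction; your explicit uniqueness argument (reducing to Proposition~\ref{prop:disjoint_union} and the linear independence of $\{(\bv,1):\bv\in\overline S\}$) is a reasonable addition that the paper leaves implicit.
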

    \begin{proof}
    Let $\bx \in \Cone(P)$. 
    Then by Proposition \ref{prop:disjoint_union}, $\bx \in \Cone(I)$ for a unique interval $I = [\underline S, \overline S]$ of $\mathcal T^{[\,)}$, and
    \begin{flalign*}
    	\bx =& \quad \sum_{\bv_i \in \underline S} \lambda_i (\bv_i,1) + \sum_{\bv_j \in \overline S \setminus \underline S} \mu_j (\bv_j,1), \,\,\, \text{ where } \lambda_i \in \R_{>0} \text{ and }\mu_j \in \R_{\geq 0}, \\
    	    =& \quad \sum_{\bv_i \in \underline S}\bigg( \lceil \lambda_i \rceil - 1 + \lambda_i - (\lceil \lambda_i \rceil -1)\bigg)(\bv_i,1) + \sum_{\bv_j \in \overline S \setminus \underline S}\left( \lfloor \mu_j \rfloor + (\mu_j - \lfloor \mu_j \rfloor)\right) (\bv_j,1) \\
    	    =& \quad \left( \sum_{\bv_i \in \underline S}\left( \lceil \lambda_i \rceil - 1 \right )(\bv_i,1) +\sum_{\bv_j \in \overline S \setminus \underline S} \lfloor \mu_j \rfloor (\bv_j,1)  \right) \\
    	    & \quad+ \left ( \sum_{\bv_i \in \underline S} \left(1 - \lceil \lambda_i \rceil + \lambda_i\right) (\bv_i,1) + \sum_{\bv_j \in \overline S \setminus \underline S}\left(\mu_j - \lfloor \mu_j \rfloor\right) (\bv_j,1) \right).
    \end{flalign*}
    \end{proof}
    
    \begin{definition}
    Let $P$ be a lattice polytope invariant under the action of a group$~G$. A half-open decomposition $\mathcal T^{[\,)}$ of $P$ is called \emph{$G$-invariant} if 
        \begin{enumerate}
        
        \item Simplices are sent to simplices (the triangulation is $G$-invariant), and
        
        \item The action of $G$ induces an automorphism of the face poset of $\mathcal T^{[\,)}$ such that intervals of $\mathcal T^{[\,)}$ are sent to intervals.
        
        \end{enumerate}
    \end{definition}

We present a lemma on the representation theory of the symmetric algebra, which is useful in \Cref{thm:permrep}, to follow.
Through it, we can see one motivation for considering the denominator $\det(I -z \cdot \rho)$ to express the equivariant Ehrhart series.
Let $\{\be_1, \dots, \be_n\}$ be a basis for a vector space $V$.
The set $\left \{\be_{i_{\scriptstyle 1}}\odot  \, \cdots \, \odot \be_{i_{\scriptstyle t}} \mid i_\ell \leq i_{\ell+1} \right \}$ 
is a basis for $\sym^t(V)$, the \emph{$t$-th symmetric power}.
A basis for the \emph{$t$-th exterior power}, $\Lambda^t(V)$, is the set $\{\be_{i_{\scriptstyle 1}} \wedge \dots \wedge \be_{i_{\scriptstyle t}} \mid i_\ell < i_{\ell+1} \}. $
If $\rho$ is a representation of $G$ on $V$, then $G$ also acts (diagonally) on $\bigotimes^t V$ by
$${g(\bv_1 \otimes \dots \otimes \bv_t) = g\bv_1 \otimes \dots \otimes g\bv_t.}$$
The group $G$ acts diagonally on $\sym^t(V)$ and $\Lambda^t(V)$ in the same manner.
It is useful to look at the characters of these actions, which we denote by $\chi_{\sym^t(V)}$ and $\chi_{\Lambda^t(V)}$ respectively.
For $g \in G$, let $\{\be_1,\dots , \be_n \}$ be an orthonormal eigenbasis of the action of $g$ on $V$, with eigenvalues $\{\lambda_1,\dots,\lambda_n\}$. 
This is possible as $\rho(g)$ can be written as a unitary matrix, see for example \cite[Chapter 1]{Serre}.
Then 
$$g(\be_{i_1}\odot \be_{i_2}\odot \cdots \odot \be_{i_t}) = \lambda_{i_1}\lambda_{i_2}\cdots \lambda_{i_t}\be_{i_1}\odot \be_{i_2}\odot \cdots \odot \be_{i_t},$$
where $i_1 \leq i_2 \leq \dots \leq i_t$.
This shows that an eigenvector of $\sym^t(V)$ has an eigenvalue that is a monomial in the eigenvalues of $\rho(g)$ of degree $t$.
So the sum of the eigenvalues of $g$ acting on $\sym^t(V)$ is the sum of all monomials in the eigenvalues of degree $t$.
With respect to the eigenbasis of the action of $g$ on $V$, $$\det(I -z \cdot \rho(g)) = 
\det \left( \begin{bmatrix}1-z\lambda_1 & & \\
& \ddots & \\
& & 1-z\lambda_n 
\end{bmatrix} \right )
=\Pi_{i=1}^{n}(1-z\lambda_i).$$
In fact, $\det(I-z \cdot \rho (g))$ is independent of the choice of basis to express $\rho$.
Thus the generating function of characters of the symmetric powers has the rational form:
\begin{equation}\label{eqn:chisym}
\sum_{t \geq 0} \chi_{\sym^t(V)}z ^t = \frac{1}{\det(I - z \cdot \rho )}.
\end{equation}

For the $t$-th exterior power, ${g( \be_{i_1}\wedge \be_{i_2}\wedge \cdots \wedge \be_{i_t}) = \lambda_{i_1}\lambda_{i_2}\dots \lambda_{i_t}\be_{i_1}\wedge \be_{i_2}\wedge \dots \wedge \be_{i_t}}$, where $i_1 < i_2 < \dots <i_t$.
The character of the $t$-th exterior product evaluated at $g$ is the sum of all square-free homogeneous monomials in $\lambda_i$ of degree $t$.
As such, we can rewrite 
\begin{equation}\label{eqn:det}
\det(I - z \cdot \rho) = \sum_{i= 0 }^n (-1)^i \chi_{\Lambda^i(V)} z^i.
\end{equation}
We have now recovered Lemma 3.1 in \cite{Stapledon}, which states:
\begin{lemma}[{\cite[Lemma 3.1]{Stapledon}}]\label{lemma:det}
Let $G$ be a finite group and let $V$ be an $n$-dimensional representation. 
Then
\begin{equation}\label{sympowers}
\sum_{t \geq 0} \sym^t V z^t = \frac{1}{1 - Vz + \Lambda^2 V z^2 - \dots + (-1)^n \Lambda^n V z ^n}.
\end{equation}
Moreover, if an element $g \in G$ acts on $V$ via a matrix $A$, and if $I$ denotes the identity $n \times n$ matrix, then both sides equal $\frac{1}{\det(I - z \cdot A)}$ when the associated characters are evaluated at $g$.
\end{lemma}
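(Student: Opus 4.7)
The plan is to prove the formal identity by passing to a simultaneous eigenbasis for the action of each $g \in G$, reducing the statement to the classical generating function identity for complete and elementary symmetric polynomials, and then lifting the character equation back to an equation in $R(G)[[z]]$.

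First, I fix $g \in G$. Since $G$ is finite, $\rho(g)$ is unitary on $V$, so there exists an orthonormal eigenbasis $\{\be_1, \ldots, \be_n\}$ of $V$ with eigenvalues $\lambda_1, \ldots, \lambda_n$ (roots of unity). As in the computations preceding the lemma, the induced eigenbasis of $\sym^t V$ shows that $\chi_{\sym^t(V)}(g)$ equals the $t$-th complete homogeneous symmetric polynomial $h_t(\lambda_1,\ldots,\lambda_n)$, and similarly $\chi_{\Lambda^t(V)}(g) = e_t(\lambda_1,\ldots,\lambda_n)$.

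Second, I invoke the classical identity of formal power series
$$\left( \sum_{t\geq 0} h_t(\lambda_1,\ldots,\lambda_n)\, z^t\right)\left(\sum_{t\geq 0} (-1)^t e_t(\lambda_1,\ldots,\lambda_n)\, z^t\right) = \prod_{i=1}^n \frac{1}{1-\lambda_i z}\cdot \prod_{i=1}^n (1-\lambda_i z) = 1.$$
Combining this with the diagonal computation $\prod_{i=1}^n (1-\lambda_i z) = \det(I - z\rho(g))$ yields both the second assertion of the lemma and the character-level version of the first assertion, namely that evaluating either side of \eqref{sympowers} at any $g \in G$ produces $1/\det(I - z\rho(g))$.

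Finally, I would lift this identity from characters to virtual representations. Because the constant term of $1 - Vz + \Lambda^2 V z^2 - \cdots + (-1)^n \Lambda^n V z^n$ is the trivial character, this element is a unit in $R(G)[[z]]$, so the right-hand side of \eqref{sympowers} makes sense as a formal power series. The product of this series with $\sum_{t\geq 0}\sym^t V \, z^t$ is an element of $R(G)[[z]]$; by the previous step its character evaluated at every $g \in G$ is $1$, and since characters separate virtual representations (the map $R(G) \to F_\C(G)$ is injective), each coefficient of this product coincides with the corresponding coefficient of the constant series $1$, giving \eqref{sympowers}. The main subtlety is not any particular computation but keeping track of where we are working: characters determine the identity on each $R(G)$-coefficient, yet the statement itself is a formal identity in $R(G)[[z]]$, so the argument is only clean once one checks that inversion is legitimate in this power series ring.
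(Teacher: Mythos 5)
Your proof is correct and follows essentially the same route as the paper: diagonalize $\rho(g)$, identify $\chi_{\sym^t V}(g)$ and $\chi_{\Lambda^t V}(g)$ as the complete homogeneous and elementary symmetric polynomials in the eigenvalues, and match both sides to $\prod_i(1-\lambda_i z)^{\pm 1} = \det(I - z\rho(g))^{\pm 1}$. Your final paragraph justifying the lift from a character identity to an identity in $R(G)[[z]]$ (invertibility of the denominator because its constant term is $\chi_{\text{triv}}$, and injectivity of $R(G)\hookrightarrow F_\C(G)$) makes explicit a step the paper leaves implicit, but it is the same argument.
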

    
For a $G$-invariant simplex $S$ with vertices $\{\bv_{0},\bv_{1},\dots ,\bv_{d} \} \in M \times \{1\}$ as in the \Cref{setup}, the infinite-dimensional $\C$-vector space 
$\C\left[\Z_{\geq 0}\verts(S)\right]$ with basis: $$ \left \{\bx = \sum_{i=0}^{d} c_i\bv_i \big \lvert c_i \in \Z_{\geq 0} \text{ for all  } i \in \{0,\dots,d \} \right\}$$ 
is isomorphic as a graded $G$-module to the vector space $\sum_{t \geq 0} \sym^t (M' \bigotimes_{\Z} \C)$, and their characters coincide. If $S$ is a unimodular simplex, then \eqref{eqn:chisym} expresses the equivariant Ehrhart series of $S$.
This idea is generalized to all simplices in \cite[Proposition 6.1]{Stapledon}, and we now generalize it further to polytopes admitting $G$-invariant half-open decompositions.

    As in the Setup \ref{setup}, let $P$ be a lattice polytope with vertices in $M \times \{1\}$, invariant under the linear action of a finite group $G$.
    For a half-open decomposition $\mathcal T^{[\,)}$ of~$P$, let $\Bx(\mathcal T^{[\,)})_i$ denote the union of the sets $\Bx(I)_i$ for intervals $I$ of $\mathcal T^{[\,)}$.
    The group $G$ permutes the lattice points in $\Bx(\mathcal T^{[\,)})_i$, and the corresponding permutation character is written $\chi_{\Bx(\mathcal T^{[\,)})_i}$.
    \begin{theorem}\label{thm:permrep}
    Let $\mathcal T^{[\,)}$ be a $G$-invariant half-open decomposition of a $d$-dimensional polytope $P$ as in the Setup \ref{setup} such that $\dim(\overline {S}) = d$ for all intervals $[\underline{S},\overline{S}]$ of $T^{[\,)}$ and such that all orbits of intervals of $T^{[\,)}$ have order $|G|$ except for a unique $G$-invariant interval. 
    Then the $\Hstar$-polynomial is the permutation representation on the union of box points in $\mathcal T^{[\,)}$, graded by height: 
    $$
    \Hstar (P;z) = \sum_{i=0}^d \chi_{\Bx\left(\mathcal T^{[\,)}\right)_i} z^i.
    $$
    \end{theorem}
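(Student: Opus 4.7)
The plan is to establish the equivalent character identity
\begin{equation*}
\EE(P;z)(g)\cdot \det(I - z\cdot\rho(g)) \;=\; \sum_{i=0}^d \chi_{\Bx(\mathcal T^{[\,)})_i}(g)\, z^i
\end{equation*}
for every $g\in G$, using \Cref{prop:conedecomp} to decompose the lattice-point count in each dilate of $P$ according to the half-open structure and then to identify the integer-cone generating function with $1/\det(I-z\cdot\rho(g))$ via \Cref{lemma:det}.

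The first step is to use the unique decomposition from \Cref{prop:conedecomp} to count $g$-fixed lattice points in $\Cone(P)$ at each height. Every such point is uniquely written as $\by+\bw$ with $\by\in\Bx(I)$ and $\bw=\sum c_j(\bv_j,1)$ for $c_j\in\Z_{\geq 0}$ and a unique interval $I=[\underline S,\overline S]$ of $\mathcal T^{[\,)}$. Because the decomposition is unique and the $G$-action permutes intervals (sending $\Cone(I)$-lattice points to $\Cone(gI)$-lattice points via the same type of decomposition), $g\cdot(\by+\bw)=\by+\bw$ is equivalent to the conjunction $g\cdot I=I$, $g\cdot\by=\by$, and $g\cdot\bw=\bw$. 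Consequently,
\begin{equation*}
\chi_{tP}(g) \;=\; \sum_{I:\,g\cdot I=I}\; \sum_{\substack{\by\in\Bx(I)^g\\ \bw\in(\Z_{\geq 0}\verts(\overline S))^g}} [\mathrm{ht}(\by)+\mathrm{ht}(\bw)=t].
\end{equation*}

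Next, I would invoke the orbit hypothesis. Since every non-$G$-invariant interval lies in a $G$-orbit of size $|G|$, its stabilizer is trivial, so no $g\neq\id_G$ fixes it. Hence for $g\neq\id_G$, only the unique $G$-invariant interval $I^*=[\underline S^*,\overline{S^*}]$ contributes, and the sum above factorizes as
\begin{equation*}
\EE(P;z)(g)\;=\;\Bigl(\sum_i |\Bx(I^*)_i^g|\,z^i\Bigr)\cdot\Bigl(\sum_s |(\Z_{\geq 0}\verts(\overline{S^*}))_s^g|\,z^s\Bigr).
\end{equation*}
For the case $g=\id_G$, the identity reduces to the classical half-open decomposition formula $h^*(P;z)=\sum_i |\Bx(\mathcal T^{[\,)})_i|\,z^i$, which follows from the same cone decomposition together with $\Ehr(P;z)=h^*(P;z)/(1-z)^{d+1}$ and the uniform top-dimensionality assumption $\dim\overline S=d$.

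The heart of the argument, and the step I expect to be the main subtlety, is identifying the integer-cone generating function above with $1/\det(I - z\cdot\rho(g))$. The key point is that $\overline{S^*}$ is a $d$-dimensional $G$-invariant simplex, so its $d+1$ vertices are linearly independent in $M'\otimes\R$ and $G$ acts on $\overline{S^*}$ by permuting them; this makes the $\C$-linear map $\C[\verts(\overline{S^*})]\to V:=M'\otimes\C$ sending a vertex $\bv$ to $(\bv,1)$ a $G$-equivariant isomorphism, since both $G$-actions are determined by the same permutation of the basis. Taking symmetric powers gives a graded $G$-equivariant isomorphism between the permutation representation on the integer cone over $\overline{S^*}$ (graded by height) and $\Sym V$, so by \Cref{lemma:det} the second factor above equals $1/\det(I-z\cdot\rho(g))$. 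Multiplying through, and using $\Bx(\mathcal T^{[\,)})_i^g=\Bx(I^*)_i^g$ for $g\neq\id_G$, yields the desired identity in all cases.
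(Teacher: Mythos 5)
Your proof is correct, and while it leans on the same key ingredients as the paper's argument—the unique decomposition of \Cref{prop:conedecomp}, the trivial-stabilizer consequence of the orbit hypothesis, and the symmetric-power determinant identity of \Cref{lemma:det}—it is organized along a genuinely different route. You verify the identity $\EE(P;z)(g)\det(I-z\rho(g))=\sum_i\chi_{\Bx(\mathcal T^{[\,)})_i}(g)z^i$ pointwise, one group element at a time: for $g=\id$ this is the classical half-open $h^\ast$ formula (valid here because every $\overline S$ is $d$-dimensional), and for $g\neq\id$ the free-orbit hypothesis confines all $g$-fixed lattice points to $\Cone(I^\ast)$ over the unique $G$-invariant interval, which then factors as a Cauchy product of the box-point generating function and the generating function of the $g$-fixed points in $\Z_{\geq 0}\verts(\overline{S^\ast})$; the latter is identified with $1/\det(I-z\rho(g))$ via the $G$-equivariant basis isomorphism $\C^{\verts(\overline{S^\ast})}\cong M'_\C$. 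The paper instead constructs a single explicit graded $G$-module isomorphism from $\C[X_0,\dots,X_d]\otimes_\C\bigoplus_I\C[\Bx(I)]$ onto $\C[\Cone(P)\cap M']$, defines it on a basis, checks well-definedness and equivariance, and extracts the character identity by taking traces at the end. Your pointwise version is shorter and makes the role of each hypothesis visible at the relevant conjugacy class; the paper's version yields an actual isomorphism of graded $G$-modules rather than merely an equality of characters. (One small notational slip: since the vertices of $\overline{S^\ast}$ already sit in $M\times\{1\}\subset M'$, the map you describe sends $\bv$ to $\bv$ itself, not to $(\bv,1)$—but the intent is clear and the argument is unaffected.)
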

    
    \begin{proof}
    Let $q$ be the number of orbits of intervals of $\mathcal T^{[\,)}$.
    There necessarily exists a $G$-invariant simplex $\overline{S_0}$ contained in an interval $[\underline{S_0}, \overline{S_0}]$ of $\mathcal T^{[\,)}$ that contains the $G$-fixed barycenter of the vertices of $P$. Label its vertices by $\{\bv_{0,0},\dots,\bv_{0,d} \}$ and label the orbit of the interval $[\underline{S_0}, \overline{ S_0}]$ by $\mathcal O_0$.
    Order the other orbits $\mathcal O_1, \dots, \mathcal O_{q-1}$ of intervals of $\mathcal T^{[\,)}$.
    For each $i \in [q-1]$, label representative simplices $\underline{S_i}$ and $\overline{ S_i}$ contained in an interval $[\underline{S_i} , \overline {S_i}]$ of $\mathcal O_i$ and label the vertices of $\overline {S_i}$ as $\bv_{i,0}, \dots ,\bv_{i,d}$.
    Identify the $G$-module $\C[\Z_{\geq 0} \verts(\overline{S_0})]$ with the polynomial ring and graded $G$-module, $\C[X_0, \dots, X_d]$ where $g(X_i) \coloneqq X_j$ if $g(\bv_{0,i}) = \bv_{0,j}$.
    Viewing $g \in G$ as a permutation on $\{0,1,\dots,d \}$,
    we write, $g(X_0^{c_0}\cdots X_d^{c_d}) = X_{g(0)}^{c_0} \cdots X_{g(d)}^{d}$.
    We define a map $f$ from
    \begin{equation}\label{eq:tensor}
    \C \left[ X_0, \dots, X_d\right] \bigotimes_{\C}
    \left(  \C\left[\Bx\left(\left[\underline {S_0},\overline{S_0}\right]\right)\right]  \bigoplus_{\substack{i: |\mathcal O_i|= |G| \\ g \in G}} \C \left[\Bx\left(\left[\underline {gS_i},\overline{gS_i}\right]\right)\right]
    \right)
    \end{equation}
    to
    $
    \C[\Cone(P) \cap M']
    $
    by defining $f$ on a basis and extending linearly.
   The map $f$ makes use of the expression for lattice points in $\Cone(P)$ given in \Cref{prop:conedecomp}.
    To shorten notation, for a lattice point $\mathbf y_0$ in $\Bx([\underline{S_0},\overline{S_0}])$ or $g\mathbf y_i$ in $\Bx([\underline{gS_i},\overline{gS_i}])$, we denote the corresponding basis vector of 
    \begin{equation}\label{eq:basis}
    \left(  \C\left[\Bx\left(\left[\underline {S_0},\overline{S_0}\right]\right)\right]  \bigoplus_{\substack{i: |\mathcal O_i|= |G| \\ g \in G}} \C \left[\Bx\left(\left[\underline {gS_i},\overline{gS_i}\right]\right)\right]
    \right)
    \end{equation}
    as $\mathbf y_0$ or $g\mathbf y_i$ respectively.
    The direct sum \eqref{eq:basis} is also a graded $G$-module.
    A basis for the tensor product \eqref{eq:tensor} is the set
    $$
    \left \{
    \begin{aligned}
    	(X_0^{c_0} \cdots X_d^{c_d}) \otimes \mathbf y_0 : \{c_0, \dots, c_d \} \in \Z_{\geq 0},\, &\mathbf y_0 \in \Bx\left(\left[\underline {S_0},\overline{S_0}\right]\right), \\
    	g(X_0^{c_0} \cdots X_d^{c_d}) \otimes g\mathbf y_i : \{c_0, \dots, c_d \} \in \Z_{\geq 0},\, &\mathbf y_i \in \Bx\left(\left[\underline{ S_i},\overline{S_i}\right]\right),\,\, g \in G, \,\,|\mathcal O _i| = |G|
    \end{aligned}
    \right \} .
    $$
    Define 
    $$f(g(X_0^{c_0}\cdots X_d^{c_d}) \otimes g \mathbf y_i )= f((X_{g(0)}^{c_0}\cdots X_{g(d)}^{c_d}) \otimes g \mathbf y_i ) := \sum_{j = 0}^d c_j g (\bv_{i,j}) + g\mathbf y_i.$$
     
    Suppose $g\mathbf y_0 = \bw_0$ for box points $\mathbf y_0$ and $\bw_0$ in $\Bx([\underline{S_0},\overline{S_0}])$ and therefore $$X_{g(0)}^{c_0} \cdots X_{g(d)}^{c_d} \otimes g\mathbf y_0 = X_{0}^{c_{g^{-1}(0)}} \cdots X_{d}^{c_{g^{-1}(d)}} \otimes \bw_0.$$ 
    To check $f$ is well-defined, we compute:
    \begin{align*}
    f \left( X_{g(0)}^{c_0} \cdots X_{g(d)}^{c_d} \otimes g\mathbf y_0 \right ) 
    =
    \sum_{j=0}^d c_j g(\bv_{i,j}) +g(\mathbf y_0) 
    = \sum_{j=0}^d c_j \bv_{i,g(j)} + \bw_0 
    &= \sum_{\gamma = 0}^d c_{g^{-1}(\gamma)} \bv_{i,\gamma} + \bw_0 \\
    &= f(X_{0}^{c_{g^{-1}(0)}} \cdots X_{d}^{c_{g^{-1}(d)}} \otimes \bw_0).
    \end{align*}
    The map $f$ is an isomorphism of vector spaces (\Cref{prop:conedecomp}) and a $G$-module isomorphism, as we verify on the basis. 
    For $h \in G$,
    \begin{align*}
    f\left( h \left(  g(X_0^{c_0} \cdots X_d^{c_d}) \otimes g\mathbf y_i  \right)  \right) &= f \left(  (X_{hg(0)}^{c_0} \cdots X_{hg(d)}^{c_d}) \otimes hg\mathbf y_i\right) \\
            &= \sum_j c_j hg(\bv_{i,j}) + hg \mathbf y_i \\
    	&= h \left(\sum_j c_j g (\bv_{i,j}) +g\mathbf y_i\right) \\
    	&= h(f(g(X_0^{c_0} \cdots X_d^{c_d}) \otimes g\mathbf y_i)).
    \end{align*}
    
    The $G$-module isomorphism also respects the grading and yields an equality among characters.
    Identify $ \C[X_0,\dots, X_d]$ and $\sum_{t \geq 0} \sym^t (M' \bigotimes_{\Z} \C)$ as graded $G$ modules as in the discussion around \Cref{lemma:det}. This yields
    \begin{align*}
     \sum_{t \geq 0}\chi_{tP}z^t = \frac{1}{\det(I - z \cdot \rho)}\sum_{i=0}^{d}\chi_{\Bx(\mathcal T^{[\,)})_i}z^i.
    \end{align*}
    \end{proof}
    
    \begin{example}\label{ex:pi3permrep}[\Cref{example:pi3} continued] 
    The two-dimensional permutahedron $\Pi_3 \subseteq \R^3$ under the action of the group $\Z/3\Z$ cyclically permuting the standard basis vectors admits a $G$-invariant half-open decomposition
    as dictated by Theorem~\ref{thm:permrep}.
    This half-open decomposition is described in Figure \ref{fig:exampledecompositions} through a triangulation $\mathcal T$ of $\Pi_3$ and a partition of the face poset of $\mathcal T$ into intervals.
    In this partition, each maximal simplex in an interval is a triangle and every orbit of triangles has order 1 or 3.
    With this half-open decomposition, the lattice points in the union of the fundamental parallelepipeds are
    \begin{align*}
    &\left\{(0,0,0)\right \} & \text{ at height 0,} \\ &\left\{(1,2,3),(2,3,1),(3,1,2),(2,2,2)\right\} & \text{ at height 1,}  \\ &\left \{(4,4,4)\right\} & \text{ at height 2.}
    \end{align*}
    At height 0 and 2, there is a unique $\Z/3\Z$-invariant box point, each giving a copy of the trivial representation.
    At height 1, the character of the permutation representation on the 4 lattice points evaluates to 4 at the identity element, and 1 at the other two group elements.
    Taking the inner product to express this representation in the basis of irreducible representations for $\Z/3\Z$: $\chi_{triv},\chi_{g},\chi_{g^2}$,
    yields the $\Hstar$-polynomial
    $$
    \Hstar(\Pi_3;z) = \chi_{triv} + (2 \chi_{triv} + \chi_{g} + \chi_{g^2})z + \chi_{triv}z^2.
    $$
    \end{example}
    
    The next theorem introduces a second way to compute the equivariant Ehrhart series using $G$-invariant half-open decompositions.
    The conditions required by the theorem ensure that the quasipolynomial $\chi_{tP}$ is actually polynomial in $t$. 
    For a $G$-invariant half-open decomposition $T^{[\,)}$ of a polytope, let
    $\mathcal O$ denote an orbit of 
    intervals of $\mathcal T^{[\,)}$, 
    $\dim(\mathcal O) \coloneqq \dim(\overline S) $ for any interval $I= [\underline S ,\overline S] \in \mathcal O$,
    and $\Bx(\mathcal O)_i \coloneqq \bigcup_{I \in \mathcal O} \Bx(I)_i$.
    Let $\chi_{\Bx(\mathcal O )_i}$ denote the permutation character on the lattice points in $\Bx(\mathcal O)_i$.
    
    \begin{theorem}\label{thm:eeseries2}
    Let $P$ be a $G$-invariant $d$-polytope as in the Setup \ref{setup} with a $G$-invariant half-open decomposition $\mathcal T^{[\,)}$ such that no interval consists of a single $d$-face of the triangulation.
    Furthermore, suppose that for each interval $I=[\underline S, \overline S]$ of $\mathcal T^{[\,)}$ and each $g \in G$, if a box point $\mathbf y\in \Bx(I)$ is fixed by $g$, then the simplex $\overline{S}$ is fixed pointwise by $g$.
    
    The equivariant Ehrhart series has the following rational generating function:
    $$
    \sum_{t \geq 0} \chi_{tP} z^t = \frac{\sum_{\mathcal O \in \mathcal T^{[\,)}} (1-z)^{d-\dim(\mathcal O)} \sum_{i=0}^{\dim(\mathcal O)}\chi_{\Bx(\mathcal O)_i}z^i }{(1-z)^{d+1}}.
    $$
    In this case, $\chi_{tP}$ is a polynomial in $t$ with coefficients in $R(G)$.
    \end{theorem}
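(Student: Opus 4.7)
The plan is to compute $\chi_{tP}(g)$ explicitly for each $g \in G$ directly from the unique cone decomposition of \Cref{prop:conedecomp}, and then assemble the generating function. By that proposition, every lattice point $\bx \in \Cone(P)$ has a unique expression $\bx = \mathbf y + \sum_{\bv_j \in \overline S} c_j(\bv_j, 1)$ with $\mathbf y \in \Bx(I)$ for a unique interval $I = [\underline S, \overline S]$ and $c_j \in \Z_{\geq 0}$. The point $\bx$ sits at height $t$ exactly when the height of $\mathbf y$ plus $\sum_j c_j$ equals $t$. The hypothesis that no interval is a single $d$-face keeps box heights in the range $\{0, 1, \dots, \dim \overline S\}$, so the summation index $i$ in the target formula correctly ranges up to $\dim \mathcal O$.

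Next, I will argue that $g \bx = \bx$ if and only if $g$ stabilizes $I$, fixes $\mathbf y$, and permutes the vertices of $\overline S$ compatibly with the multiplicities $c_j$. The first two conditions follow from uniqueness of the cone decomposition (since boxes of distinct intervals are disjoint). The third is where the pointwise-fixing hypothesis becomes essential: whenever $g$ fixes $\mathbf y$, it must fix every vertex of $\overline S$, so the induced permutation on the $c_j$'s is trivial and no compatibility constraint is imposed. Consequently, $g$-fixed lattice points at height $t$ biject with pairs $(\mathbf y, (c_j))$ where $\mathbf y$ is a $g$-fixed box point of some interval $I$ at some height $i \leq t$, and $(c_j)$ is an arbitrary nonnegative integer tuple with $\sum_j c_j = t - i$; the latter count is $\binom{t - i + \dim \overline S}{\dim \overline S}$.

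Grouping intervals by $G$-orbit and observing that $\sum_{I \in \mathcal O} |\Bx(I)_i^g| = \chi_{\Bx(\mathcal O)_i}(g)$ (since any $g$-fixed box point forces $g$ to stabilize its ambient interval), I obtain
$$
\chi_{tP}(g) = \sum_{\mathcal O} \sum_{i=0}^{\dim \mathcal O} \chi_{\Bx(\mathcal O)_i}(g) \binom{t - i + \dim \mathcal O}{\dim \mathcal O},
$$
which is manifestly a polynomial in $t$ with coefficients in $R(G)$, settling the polynomiality claim. The rational generating function then follows by summing against $z^t$ and invoking the standard identity $\sum_{t \geq 0} \binom{t - i + k}{k} z^t = z^i / (1-z)^{k+1}$, after which multiplying each orbit's contribution by $(1-z)^{d - \dim \mathcal O}$ produces the common denominator $(1-z)^{d+1}$. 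I anticipate the main obstacle to be this first analytic step: carefully confirming that the uniqueness of the cone decomposition together with the pointwise-fixing hypothesis really does eliminate any nontrivial permutation action of $g$ on the coefficients $c_j$, rather than leaving a weaker orbit-averaged condition that would spoil the clean binomial count.
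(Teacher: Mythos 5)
Your proof takes essentially the same route as the paper's: both rely on the unique cone decomposition of Proposition~\ref{prop:conedecomp}, use that uniqueness together with the pointwise-fixing hypothesis to identify the $g$-fixed lattice points at height $t$ with pairs consisting of a $g$-fixed box point and an arbitrary nonnegative multiplicity vector on the vertices of $\overline S$, and then sum over $G$-orbits of intervals. Where the paper packages the bijection as a multivariable generating-function identity for $\Cone(\mathcal O)^g$ and then homogenizes, you count directly at each height $t$ via $\sum_{t\ge 0}\binom{t-i+k}{k}z^t=z^i/(1-z)^{k+1}$ — a cosmetic difference that has the small benefit of making the polynomial-in-$t$ claim explicit.
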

    
    \begin{proof}
    We break $\Cone(P)$ into orbits and describe the permutation representation on each piece.
    Let $\mathcal O$ be an orbit of intervals of $\mathcal T^{[\,)}$, and let $g \in G$.
    Furthermore, let $\Cone(\mathcal O) \coloneqq \bigcup_{I \in \mathcal O} \Cone(I)$.
    
    \begin{claim}
    \begin{equation}\label{claim:orbits}
    	\sum_{\bu\, \in \, \Cone (\mathcal O)^g \cap \Z ^{d+1}} z^\bu  = \sum_{I=[\underline S,\overline{S}] \in \mathcal O} \frac{\sum_{\mathbf m \in \Bx(I)^g}z ^{\mathbf m}}{\Pi_{\bs \in \overline{S}} (1-z^\bs)}
    \end{equation}
    \end{claim}
    \begin{proof}[Proof of Claim]
    We first show that the left hand side of Equation \eqref{claim:orbits} is a subset of the right in terms of the lattice points appearing in the exponents of the series expansions.
    Suppose $\bu \in \Cone (\mathcal O)^g \cap \Z^{d+1}$.
    By Proposition \ref{prop:disjoint_union}, there exists a unique interval ${I = [\underline S, \overline S] \in \mathcal O}$ such that 
    $\bu \in \Cone(I)$. 
    Let $\{\bv_1,\dots,\bv_{n+1} \}$ be the vertices of $\overline S$.
    Then we may write $\bu$ uniquely as ${\bu = \sum_{i = 1}^{n+1} (c_i \bv_i)  + \mathbf y}$, with $c_i \in \Z_{\geq 0}\,$ for all $i \in [n+1]$, and $\mathbf y\in \Bx(I)$ by Proposition \ref{prop:conedecomp}.
    As the expression of $\bu$ is unique, $\bu = g(\bu) = \sum_{i = 1}^{n+1}(c_i g (\bv_i)) + g(\mathbf y)$ implies that $\mathbf y= g(\mathbf y)$.
    This implies that the coefficient of $z^{\bu}$ in the series expansion of the right side of Equation \eqref{claim:orbits} is 1.
    
    We now show the right hand side of Equation \eqref{claim:orbits} is a subset of the left. 
    By Proposition \ref{prop:conedecomp}, every lattice point $\bu$ in the series expansion of the right side of Equation \eqref{claim:orbits} as $z^{\bu}$ has coefficient one.
    Furthermore, there exists a unique interval $I = [\underline S, \overline S] \in \mathcal O$ with $\verts(\overline S) = \{\bv_1 , \dots \bv_{n+1} \}$ such that
    $\bu = \sum_{i=1}^{n+1}(c_i \bv_i) +\mathbf y,$
    with $c_i \in \Z_{\geq 0}$ for all $i \in [n+1]$ and $\mathbf y \in \Bx(I)^g$.
    Then,
    \begin{align*}
    	g\left (\sum_{i=1}^{n+1} c_i \bv_i + \mathbf y\right) &= \sum_{i=1}^{n+1} (c_i g(\bv_i)) + g (\mathbf y) \\ 
    			      &= \sum_{i=1}^{n+1}(c_i g(\bv_i)) + \mathbf y     \\
    			      &= \sum_{i=1}^{n+1}(c_i \bv_i ) + \mathbf y\text{\,\, (by assumption)}.  
    \end{align*}
    Thus $\bu \in  \Cone(\mathcal O)^g $ and Equation \eqref{claim:orbits} holds.
    \end{proof}
    Homogenize, sending $z \rightarrow (\mathbf{1} ,z_{d+1})$. 
    Let $n = \dim (\mathcal O)$.
    Then Equation \eqref{claim:orbits} becomes
    $$
    \sum_{t \geq 0}\left |\Cone(\mathcal O)^g \cap (\Z ^{d} \times t) \right| z^t  = \frac{\sum_{k = 0}^{n} \left|\Bx(\mathcal O)_k^g\right| z ^k}{(1-z)^{n+ 1}}.
    $$
    For all $t \in \Z_{\geq 0}$, $\left |\Cone(\mathcal O)^g \cap (\Z ^{d} \times t) \right|$ is equal to the permutation character on the lattice points at height $t$ in $\Cone(\mathcal O)$ evaluated at $g$, which we denote by $\chi_{t\mathcal O}(g)$.
    Likewise, $\left|\Bx(\mathcal O)_k^g\right| $ is equal to $\chi_{\Bx(\mathcal O)_k}(g) $ for all $k$ such that $0 \leq k \leq n$. 
    So in general,  $$
    \sum_{t \geq 0}\chi_{t\mathcal O} z^t  = \frac{\sum_{k = 0}^{n} \chi_{\Bx(\mathcal O)_k} z ^k}{(1-z)^{n+ 1}}.
    $$
    Summing over all the orbits yields,
    \begin{align*}
        \sum_{t \geq 0} \chi_{tP}z^t & =  \sum_{\mathcal O \in T^{[\,) }} \frac{\sum_{k=0}^{\dim(\mathcal O)} \chi_{\Bx(\mathcal O)_k}z^k}{(1-z)^{\dim(\mathcal O) + 1}}\\
        &= \frac{\sum_{\mathcal O \in T^{[\,) }} (1-z)^{d-\dim(\mathcal O)} \chi_{\Bx(\mathcal O)_k}z^k}{(1-z)^{ d + 1}}.
    \end{align*}
    \end{proof}
    
    \begin{example}[Example \ref{ex:pi3permrep} continued]\label{example:pi3eeseries}
    Again consider the two dimensional permutahedron $\Pi_3 \subseteq \R^3$ under the action of $\Z/3\Z$ cyclically permuting the standard basis vectors.
    It is necessary to use a different half-open decomposition from that in Example \ref{ex:pi3permrep} to compute the equivariant Ehrhart series.
    
    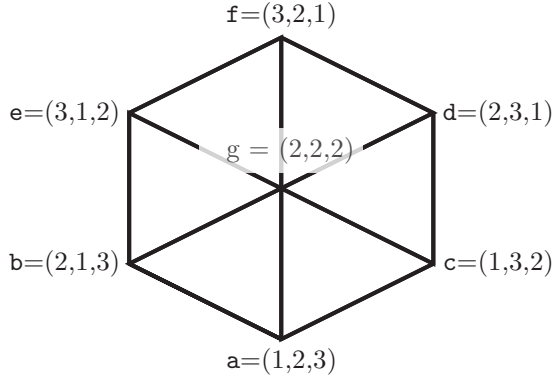
\begin{figure}
        \begin{center}
        \begin{tikzpicture}
    \coordinate[label=below:{\texttt{a}=(1,2,3)}] (a) at (0,0) {};
    \coordinate[label=right:{\texttt{c}=(1,3,2)}] (c) at (2,1) {};
    \node[] (c') at (1.8,1.4) {};
    \node[] (c'') at (1.8,.9) {};
    \coordinate[label=left: {\texttt{b}=(2,1,3)}] (b) at (-2,1) {};
    \node[] (b') at (-1.8,1.4) {};
    \node[] (b'') at (-1.8,.9) {};
    \coordinate[label=left: {\texttt{e}=(3,1,2)}] (e) at (-2,3) {};
    \coordinate[label=right:{\texttt{d}=(2,3,1)}] (d) at (2,3) {};
    \coordinate[label=above:{\texttt{f}=(3,2,1)}] (f) at (0,4) {};
    \node[] (f') at (0.2,3.8) {};
    \node[] (f'') at (-0.2,3.8) {};
    \coordinate[] (g) at (0,2) {};
    \draw[line width=1.5pt,color=black,  ] (a) -- (b) -- (g) -- (a);
    \draw[line width=1.5pt,color=black,  ] (b) -- (e) -- (g) -- (b);
    \draw[line width=1.5pt,color=black,  ] (e) -- (f) --(g) -- (e);
    \draw[line width=1.5pt,color = black, ] (f) -- (d) -- (g) -- (f);
    \draw[line width=1.5pt,color = black, ] (d) --(c) --(g) -- (d) ;
    \draw[line width=1.5pt,color =black, ] (c) -- (a) -- (g) -- (c) ;
    \draw[line width=1.5pt, color=black] (a) -- (b) -- (e) -- (f) -- (d) -- (c) -- (a) -- (b);
    
    \node[rectangle,fill = white,opacity=.8, text opacity=1] (g') at (0,2.5) {g = (2,2,2)};
\end{tikzpicture}
        \end{center}
        \caption{A triangulation of the permutahedron $\Pi_3$ into six triangles.}
        \label{fig:pi3sixtriangles}
    \end{figure}
    
    As shown in Figure \ref{fig:pi3sixtriangles}, we triangulate $\Pi_3$ using a barycentric subdivision.
    The intervals we use for a $G$-invariant half-open decomposition are:
    $$[\emptyset,g],[a,abg],[b,beg],[c,acg],[d,cdg],[e,efg],[f,dfg].$$
    The only lattice points that can be box points must have a coordinate sum that is a multiple of 6.
    The box points for each of the respective intervals are:
    $$\{(0,0,0)\},\{(1,2,3) \},\{(2,1,3) \} ,\{ (1,3,2)\},\{(2,3,1) \},\{(3,1,2)\},\{(3,2,1) \}$$
    There are 3 orbits of box points.
    The character table of $\Z/3 \Z$ is as follows, with $\zeta$ a cube root of unity.
        \begin{center}
        \begin{tabular}{c|ccc}
        	$\Z/3\Z$   & $id$ & $g$ & $g^2$ \\
        	\hline
        	$\chi_{triv}$& 1 &  1 & 1 \\
        	$\chi_{g}$   & 1 & $\zeta$ & $\zeta^2$ \\
        	$\chi_{g^2}$ & 1 & $\zeta^2$ & $\zeta$ 
        \end{tabular}
        \end{center}
    We calculate the following rational generating function for the equivariant Ehrhart series using the formula from \Cref{thm:eeseries2}:
    $$
    \sum_{t\geq 0}\chi_{t\Pi_3}z^t = \frac{\chi_{triv}+ 2(\chi_{g}+\chi_{g^2})z+\chi_{triv}z^2   }{(1-z)^3}
    $$
    Evaluating at the group elements yields the Ehrhart series of the fixed subpolytopes.
    Transforming to $\chi_{tP}$ using the usual transformation gives: 
    \begin{align*}
    	\chi_{t\Pi_3} &= \chi_{triv}\binom{t+2}{2} + 2(\chi_{g}+\chi_{g^2})\binom{t+1}{2} + \chi_{triv}\binom{t}{2} \\
    		  &= (\chi_{triv}+\chi_{g}+\chi_{g^2})t^2 + (\chi_{triv}+\chi_{g}+\chi_{g^2})t + \chi_{triv}.
    \end{align*}
    Evaluating at the identity and $g$ yields the Ehrhart polynomials of the fixed subpolytopes, $3t^2+3t +1$ and $1$ respectively.
    Theorem \ref{thm:eeseries2} would need to be modified to allow for rational triangulations in order to be applied to $\Pi_3$ under the action of $S_3$ permuting the standard basis vectors.
    

    \end{example}

    \subsection{Breaking Down the H*-series}\label{sec:breaking Hstar} 
  
    In this section, we consider the hypersimplex and the (prime) permutahedra under the cyclic group action and provide an explicit formulas for the equivariant Ehrhart series.
        
          \subsubsection{Cyclic Group Action on Hypersimplex}
        We give a combinatorial interpretation (Theorem \ref{thm: hypersimplex cyclic group}) for the $\Hstar$-series of the hypersimplex $\Delta_{k,n}$ under the group $\Z/n\Z$ where the action is given by cyclically shifting coordinates. We build upon \cite{Katz} to compute the $\Hstar$-series and then follow the arguments in \cite{Kim} to give an equivariant generalization. Before stating the main theorem, we give several definitions.
        
         For two integers $0<k<n$, the $(k,n)$\emph{-th hypersimplex} is defined to be 
          $$\Delta_{k,n}=\{(x_1,\cdots,x_n) \in \mathbb{R}^n \mid 0 \leq x_i \leq 1,   x_1+\cdots+x_n=k\}.$$
         It is an $(n-1)$-dimensional polytope inside $\mathbb{R}^n$ whose vertices are (0,1)-vectors with exactly $k$ 1's. In particular, it has $\binom{n}{k}$ many vertices.

         A \emph{decorated ordered set partition} $((L_1)_{l_1},\cdots,(L_m)_{l_m})$ of type $(k,n)$ consists of an ordered partition $(L_1,\cdots,L_m)$ of $\{1,2,...,n\}$ and an $m$-tuple $(l_1,\cdots,l_m) \in \mathbb{Z}^m$ such that $l_1+\cdots+l_m=k$ and $l_i \geq 1$. We call each $L_i$ a $block$ and we place them on a circle in a clockwise fashion and then think of $l_i$ as the clockwise distance between adjacent blocks $L_i$ and $L_{i+1}$ where indices are considered modulo $m$ (see Figure \ref{fig1}). So the circumference of the circle is $l_1+\cdots+l_m=k$. We regard decorated ordered set partitions up to cyclic rotation of blocks (together with corresponding $l_i$). 
         For example, the decorated ordered set partition $(\{1,2,7\}_2,\{3,5\}_3,\{4,6\}_1)$ is the same as $(\{3,5\}_3,\{4,6\}_1,\{1,2,7\}_2)$. A decorated ordered set partition is called \emph{hypersimplicial} if it satisfies $1 \leq l_i \leq |L_i|-1$ for all $i$.

        Given a decorated ordered set partition, we define the \emph{winding vector} and the \emph{winding number}. To define the winding vector, let $w_i$ be the distance of the path starting from the block containing $i$ to the block containing $(i+1)$ moving clockwise (where $i$ and $(i+1)$ are considered modulo $n$). If $i$ and $(i+1)$ are in the same block then $w_i=0$. In Figure \ref{fig1}, the winding vector is $w=(0,2,3,3,3,1,0)$. The total length of the path is $(w_1+\cdots+w_n)$, which should be a multiple of $k$ as we started from 1 and came back to 1 moving clockwise. If $(w_1+\cdots+w_n)=kd$, then we define the winding number to be $d$. In Figure \ref{fig1}, the winding number is 2.
        
        We define a cyclic group $\Z/n\Z$ action on the set of decorated ordered set partitions of type $(k,n)$ by cyclically shifting elements in their parts. It is straightforward to check that the $\Z/n\Z$ action cyclically shifts the winding vectors so the action preserves the winding number.
        
        For some $\sigma\in \Z/n\Z$, a decorated ordered set partition is called $\sigma$-fixed if it is invariant under the action of $\sigma$. Note that a decorated ordered set partition is $\sigma$-fixed if and only if it has $\sigma$-fixed winding vector. For example, consider $\sigma\in \Z/4\Z$  defined by $\sigma(i)=i+1$. Then $\sigma$ sends $(\{1,3\}_1,\{2,4\}_1)$ to $(\{2,4\}_1,\{1,3\}_1)$ which is identified with $(\{1,3\}_1,\{2,4\}_1)$. Therefore $(\{1,3\}_1,\{2,4\}_1)$ is $\sigma$-fixed. However, $(\{1,3\}_2,\{2,4\}_1)$ is not $\sigma$-fixed as it goes to $(\{1,3\}_1,\{2,4\}_2)$. Note that $(\{1,3\}_2,\{2,4\}_1)$ is  $\sigma^2$-fixed. 
        
        \begin{example} \label{ex1} 
        Consider a decorated ordered set partition $(\{1,2,7\}_2,\{3,5\}_3,\{4,6\}_1)$ of type (6,7) and $\sigma\in\mathbb{Z}/7\mathbb{Z}$ defined by $\sigma(i)=i+1$. Then $\sigma$ sends $(\{1,2,7\}_2,\{3,5\}_3,\{4,6\}_1)$ to $(\{1,2,3\}_2,\{4,6\}_3,\{5,7\}_1)$. Both are not hypersimplicial as $3>|\{3,5\}|-1$.
        
        The winding vector of the decorated ordered set partition $(\{1,2,7\}_2,\{3,5\}_3,\{4,6\}_1)$  is $(0,2,3,3,3,1,0)$ and the winding vector of $(\{1,2,3\}_2,\{4,6\}_3,\{5,7\}_1)$  is $(0,0,2,3,3,3,1)$ which is obtained by applying $\sigma$ to $(0,2,3,3,3,1,0)$.
        \end{example}
        
        \begin{figure}[ht] 
            \centering
            \begin{tikzpicture}[scale=0.55]
    \filldraw[black] (0,3) circle (2pt) node[anchor=south] {\{1,2,7\}};
    \filldraw[black] (2.598,-1.5) circle (2pt) node[anchor=west] {\{3,5\}};
    \filldraw[black] (-2.598,1.5) circle (2pt) node[anchor=east] {\{4,6\}};
    \draw (0,0) circle (3);
    
    \filldraw[black] (5+8,3) circle (2pt) node[anchor=south] {\{1,2,3\}};
    \filldraw[black] (5+10.598,-1.5) circle (2pt) node[anchor=west] {\{4,6\}};
    \filldraw[black] (5+8-2.598,1.5) circle (2pt) node[anchor=east] {\{5,7\}};
    \draw (5+8,0) circle (3);
    
    \draw [red,thick,<-,domain=-20:70] plot ({3.5*cos(\x)}, {3.5*sin(\x)});
    \draw [red,thick,->,domain=145:108] plot ({3.5*cos(\x)}, {3.5*sin(\x)});
    \draw [red,thick,<-,domain=165:325] plot ({3.5*cos(\x)}, {3.5*sin(\x)});
    \filldraw[black] (3.672-0.5,9-7.520) circle (0.00001pt) node[anchor=west] {2};
    \filldraw[black] (-8.5+7.0208360839,9-12.172077255
    ) circle (0.00001pt) node[anchor=north] {3};
    \filldraw[black] (-8.5+6.393647419
    ,9-6.2047757148
    ) circle (0.00001pt) node[anchor=south] {1};

   \begin{scope}[shift={(13,0)}]
   
    \draw [red,thick,<-,domain=-20:70] plot ({3.5*cos(\x)}, {3.5*sin(\x)});
    \draw [red,thick,->,domain=145:108] plot ({3.5*cos(\x)}, {3.5*sin(\x)});
    \draw [red,thick,<-,domain=165:325] plot ({3.5*cos(\x)}, {3.5*sin(\x)});
    \filldraw[black] (3.672-0.5,9-7.520) circle (0.00001pt) node[anchor=west] {2};
    \filldraw[black] (-8.5+7.0208360839,9-12.172077255
    ) circle (0.00001pt) node[anchor=north] {3};
    \filldraw[black] (-8.5+6.393647419
    ,9-6.2047757148
    ) circle (0.00001pt) node[anchor=south] {1};

    \end{scope}

\end{tikzpicture}
            \caption{The figure on the left is the picture associated to the decorated ordered set partition $(\{1,2,7\}_2,\{3,5\}_3,\{4,6\}_1)$ and the figure on the right is the picture after acting by $\sigma$. }
      
            \label{fig1}
        \end{figure}
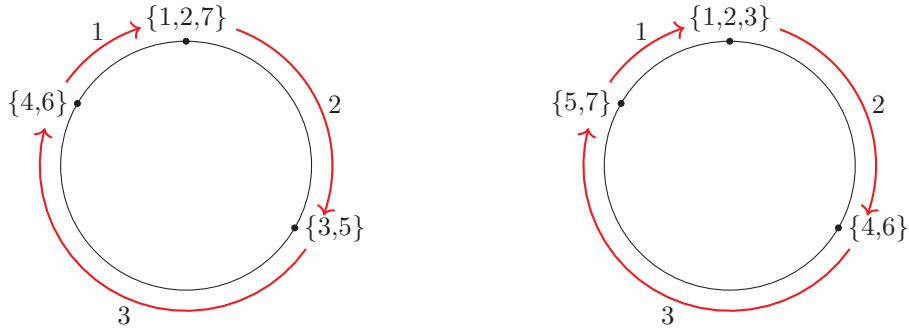

        Now we state our main theorem in this section.

        \begin{theorem}\label{thm: hypersimplex cyclic group}
        The character $\Hstar_d(\Delta_{k,n};z)$ is the permutation character on the set of decorated ordered set partitions of type $(k,n)$ with winding number $d$.
         \end{theorem}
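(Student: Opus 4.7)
The plan is to construct a $\Z/n\Z$-equivariant bijection between box points of an invariant half-open decomposition of $\Delta_{k,n}$, graded by height, and decorated ordered set partitions of type $(k,n)$, graded by winding number, and then apply \Cref{thm:permrep}. First, following \cite{Katz}, I would exhibit an explicit cyclic-invariant unimodular triangulation of $\Delta_{k,n}$ (the ``circuit triangulation''), whose maximal simplices are indexed by circular combinatorial data that transforms naturally under cyclic coordinate shift. From this triangulation, I would build a $\Z/n\Z$-invariant half-open decomposition $\mathcal T^{[\,)}$ in which the simplex containing the barycenter $(k/n,\dots,k/n)$ sits in the unique $G$-invariant interval and every other orbit of intervals has size $n$.

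Next, following \cite{Kim}, I would identify the box points of $\mathcal T^{[\,)}$ at height $d$ with decorated ordered set partitions of type $(k,n)$ with winding number $d$. Kim's bijection reads the DOSP off the relative positions of the vertices of the ambient simplex, and the ``height'' of the resulting box point matches the total circular-path length $w_1+\dots+w_n = kd$, divided by $k$. The crucial verification is that this identification is equivariant: the cyclic shift $\sigma\in\Z/n\Z$ acting on lattice points by $\sigma(x_1,\dots,x_n)=(x_n,x_1,\dots,x_{n-1})$ corresponds, under the bijection, to cyclically relabelling the elements within the blocks of a DOSP, which cyclically shifts the winding vector $(w_1,\dots,w_n)$ and hence preserves the winding number.

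With the equivariant bijection in hand, \Cref{thm:permrep} yields
\[
\Hstar(\Delta_{k,n};z) = \sum_{d\ge 0}\chi_{\Bx(\mathcal T^{[\,)})_d}\, z^d,
\]
and the bijection identifies the coefficient of $z^d$ with the permutation character of $\Z/n\Z$ on DOSPs of type $(k,n)$ with winding number $d$, as claimed.

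The main obstacle is verifying the orbit hypothesis of \Cref{thm:permrep}: that every orbit of intervals of $\mathcal T^{[\,)}$ has order $n$ except the unique $G$-invariant interval. If, for some proper divisor of $n$, a subcyclic subgroup stabilizes a non-barycentric simplex of the circuit triangulation, this hypothesis fails and I would instead apply \Cref{thm:eeseries2}, which accepts smaller orbit sizes provided every $g$-fixed box point lies in a simplex that is fixed pointwise by $g$ --- a property I would verify directly from Kim's description, since a $g$-fixed DOSP has $g$-invariant blocks and the corresponding box point is supported on the average of a $g$-orbit of simplex vertices. A secondary subtlety is ensuring that the bijection commutes with the group action on individual lattice points, not merely on orbits; this reduces to tracing through Kim's explicit formula for the box point associated to a DOSP and comparing with a cyclic shift.
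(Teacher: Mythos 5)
Your approach is genuinely different from the paper's, and unfortunately it has a fatal gap. The paper does not use any invariant half-open decomposition at all; it proves the identity \emph{element by element}. For each fixed $\sigma\in\Z/n\Z$ it computes $\Hstar(\Delta_{k,n};z)(\sigma)$ explicitly via Katz's Veronese-type formula simplified by Kim (yielding \eqref{eq:Hstar hypersimplex}), and separately enumerates $\sigma$-fixed hypersimplicial decorated ordered set partitions with a given winding number by an inclusion--exclusion over ``bad'' blocks (\Cref{prop: hypersimplicial dop number}, \Cref{cor: Hknd count}), then matches the two formulas. This is more robust than a global bijection because it never needs an invariant triangulation with controlled orbit structure.

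The gap in your plan is that neither \Cref{thm:permrep} nor \Cref{thm:eeseries2} can be applied to $\Delta_{k,n}$ under $\Z/n\Z$ in general, so there is no theorem available to turn a box-point bijection into the claimed $\Hstar$ statement. For \Cref{thm:permrep}: you need every orbit of intervals to have size exactly $|G|=n$ except for a single $G$-invariant one. But if any non-identity $\sigma$ fixes a box point $\mathbf y\in\Bx(I)$, then $\sigma I=I$ (distinct intervals have disjoint fundamental parallelepipeds), so $I$ has orbit size $<n$. Under your proposed bijection, $\sigma$-fixed box points correspond to $\sigma$-fixed DOSPs, and these abound: e.g. for $\Delta_{2,4}$ and $\sigma=(1\,3)(2\,4)$ of order $2$, \emph{every} hypersimplicial DOSP of type $(2,4)$ is $\sigma$-fixed, as the paper's own example shows. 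So the orbit hypothesis is violated wildly, not just marginally. For your fallback, \Cref{thm:eeseries2} concludes that $\chi_{t\Delta_{k,n}}$ is a \emph{polynomial} in $t$. But for $\Delta_{2,4}$ and $\sigma=(1\,2\,3\,4)$ of order $4$, the fixed subpolytope is the single non-lattice point $(\tfrac12,\tfrac12,\tfrac12,\tfrac12)$, so $\ehr(\Delta_{2,4}^\sigma;t)$ is $1$ for $t$ even and $0$ for $t$ odd --- a genuine quasipolynomial. Hence \Cref{thm:eeseries2}'s hypotheses cannot hold either, and a fortiori no $\Z/n\Z$-invariant half-open decomposition of $\Delta_{k,n}$ of the kind your strategy requires can exist in general. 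You would need an entirely different mechanism to pass from a box-point bijection to the $\Hstar$-statement; the paper sidesteps this by computing $\Hstar(\cdot)(\sigma)$ directly for each $\sigma$.

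Two smaller comments: the theorem statement in the paper says ``decorated ordered set partitions'' but, as the proof and the terminal example make clear, it means \emph{hypersimplicial} decorated ordered set partitions; your proposal repeats the same imprecision. And even setting aside the orbit-structure issues, the claim that the circuit triangulation admits a $\Z/n\Z$-invariant half-open decomposition would itself require proof --- the paper lists the analogous existence question for the permutahedron as open (\Cref{sec:further questions}).
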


        The proof of \Cref{thm: hypersimplex cyclic group} will be given as follows.
        For a fixed element $\sigma$ in $\mathbb{Z}/n\mathbb{Z}$, we will first give an explicit formula for $\Hstar(\Delta_{k,n};z)(\sigma)$ (see \eqref{eq:Hstar hypersimplex}). 
        Then we enumerate $\sigma$-fixed hypersimplicial decorated ordered set partitions with a certain winding number.
        
        From now on we fix $\sigma \in \mathbb{Z}/n\mathbb{Z}$. We denote the order of $\sigma$ with $b$ and letting $a=\frac{n}{b}$, any $\sigma$-fixed vector in $\mathbb{R}^n$ is of the form 
        \begin{equation*}
            (x_1,\cdots,x_a,x_{1},\cdots,x_a,\cdots,x_{1},\cdots,x_a).
        \end{equation*}
        
        Note that 
       $$d\Delta_{k,n}=\left \{(x_1,\cdots,x_n) \in \mathbb{R}^n \mid 0 \leq x_i \leq d,   x_1+\cdots+x_n=d k\right \},$$
        so the number of $\sigma$-fixed lattice points of $d\Delta_{k,n}$ equals the number of integer solutions of 
        \begin{align}\label{eq: hypersimplex fixed points}
            0 \leq x_i \leq d, \qquad
            b\left(\sum_{i=1}^{a}x_i\right)=dk.
        \end{align}
        We denote 
        \begin{align}\label{eq:notation b1}\nonumber
            &h=gcd(b,k)\\ &b=hb_1, \qquad k=h k_1.
        \end{align} Then \eqref{eq: hypersimplex fixed points} has an integer solution if and only if $d$ is a multiple of $b_1$.
        Suppose so, and let $d=d'b_1$, then \eqref{eq: hypersimplex fixed points} becomes
        \begin{equation*}
             0 \leq x_i \leq d'b_1, \qquad
            \sum_{i=1}^{a}x_i=d' k_1. 
        \end{equation*}
       By \cite[Theorem 2.8]{Katz},
       
        \begin{equation} \label{eq: hilbert series veronese} \EE(\Delta_{k,n};z)(\sigma)=\frac{\sum\limits_{i \geq 0} (-1)^{i}\binom{a}{i} \left(\sum\limits_{j\geq0} \binom{i}{j} (z^{b_1}-1)^j \left(\sum\limits_{\ell \geq 0} \binom{a-j}{\ell(k_1-b_1i)}_{\hspace{-1mm}k_1-b_1i} (z^{b_1})^{\ell}\right)\right)}{(1-z^{b_1})^a} 
           \end{equation}
        where the notation $\binom{n}{k}_{p}$ means the coefficient of $x^k$ in $(1+x+\dots+x^{p-1})^n$. In \cite[Section 2.1]{Kim}, the second author showed that \eqref{eq: hilbert series veronese} can be simplified as follows:
        
        \begin{equation*}  \EE(\Delta_{k,n};z)[\sigma]= \frac{\sum\limits_{i \geq 0} (-1)^{i}\binom{a}{i} \left(\sum\limits_{\ell\geq0} \binom{a}{\ell(k_1-b_1 i)-i)}_{k_1-b_1 i}(z^{b_1})^{\ell} \right)} {(1-z^{b_1})^a}.
           \end{equation*}
           Since $\det (I - z \cdot \rho(\sigma))=(1-z^b)^a$, we conclude
          \begin{align}  \nonumber \Hstar(\Delta_{k,n};z)(\sigma)&=\det (I - z \cdot \rho(\sigma))\EE(\Delta_{k,n};z)(\sigma)\\
          \nonumber&= \left(\frac{1-z^{h b_1}}{1-z^{b1}}\right)^a\sum\limits_{i \geq 0} (-1)^{i}\binom{a}{i} \left(\sum\limits_{\ell\geq0} \binom{a}{\ell(k_1-b_1 i)-i)}_{k_1-b_1 i}(z^{b_1})^{\ell} \right)\\
          &=\left(\sum_{\ell\geq 0}\binom{a}{\ell}_{h}(z^{b_1})^{\ell}\right)\sum\limits_{i \geq 0} (-1)^{i}\binom{a}{i} \left(\sum\limits_{\ell\geq0} \binom{a}{\ell(k_1-b_1 i)-i)}_{k_1-b_1 i}(z^{b_1})^{\ell} \right)\label{eq: hilbert series veronese2}
           \end{align}
          
          \begin{remark}
          The sums in \eqref{eq: hilbert series veronese} and \eqref{eq: hilbert series veronese2} are finite sums since $\binom{n}{k}_{p}$ is zero if $k$ is greater than $n(p-1)$.
          \end{remark} 
           To simplify \eqref{eq: hilbert series veronese2} we need the following lemma.
          
           \begin{lemma}\label{lemma: binomial}
           For positive integers $a, h, i$ and $p$, we have
     
           \begin{equation*}
               \left(\sum_{\ell \geq 0} \binom{a}{\ell}_h z^{\ell}\right)\left(\sum_{\ell \geq 0} \binom{a}{\ell p-i}_{p} z^{\ell}\right)=\sum_{\ell \geq 0}  \binom{a}{\ell p-i}_{h p} z^{\ell}.
           \end{equation*}
           \end{lemma}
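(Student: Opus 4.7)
The plan is to reduce everything to a factorization identity for $q$-integers. Recall that by definition $\binom{a}{k}_p = [x^k]\,(1+x+\cdots+x^{p-1})^a$, so the generating functions appearing in the lemma are just coefficient sequences of truncated geometric series raised to the $a$-th power. Since the claimed identity is an equality of power series in $z$, it suffices to check that the coefficient of $z^N$ agrees on both sides for every $N \ge 0$. Concretely, after extracting coefficients the lemma reduces to the identity
\begin{equation*}
\binom{a}{Np - i}_{hp} \;=\; \sum_{s \ge 0} \binom{a}{s}_{h}\,\binom{a}{(N-s)p - i}_{p}.
\end{equation*}

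First I would establish the polynomial factorization
\begin{equation*}
1 + x + x^2 + \cdots + x^{hp-1} \;=\; \bigl(1 + x + \cdots + x^{p-1}\bigr)\bigl(1 + x^p + x^{2p} + \cdots + x^{(h-1)p}\bigr),
\end{equation*}
which is immediate from the geometric series formula $\frac{1 - x^{hp}}{1-x} = \frac{1-x^{hp}}{1-x^p} \cdot \frac{1-x^p}{1-x}$. Raising both sides to the power $a$ gives
\begin{equation*}
(1 + x + \cdots + x^{hp-1})^a \;=\; (1 + x + \cdots + x^{p-1})^a \,\cdot\, (1 + x^p + \cdots + x^{(h-1)p})^a.
\end{equation*}

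Next I would read off coefficients. Substituting $y = x^p$ into the second factor shows that $(1 + x^p + \cdots + x^{(h-1)p})^a$ is supported only on powers of $x^p$, with $[x^{sp}]$-coefficient equal to $\binom{a}{s}_h$. Multiplying by the first factor and taking the coefficient of $x^{Np - i}$ via the standard convolution formula yields exactly the right-hand side of the displayed identity above, while the left-hand side is simply $[x^{Np-i}](1+x+\cdots+x^{hp-1})^a = \binom{a}{Np-i}_{hp}$. Summing over $N$ with weight $z^N$ recovers the lemma.

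There is no real obstacle here; the only subtlety is the bookkeeping of the index shift ($s \leftrightarrow N - \ell$) when translating the convolution back into the form stated in the lemma, and checking that the sums are finite (which follows from $\binom{a}{k}_p = 0$ when $k < 0$ or $k > a(p-1)$, so only finitely many terms contribute for each $N$).
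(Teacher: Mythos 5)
Your proof is correct, and it is essentially the same argument as the paper's: the polynomial factorization $1 + x + \cdots + x^{hp-1} = (1 + x + \cdots + x^{p-1})(1 + x^p + \cdots + x^{(h-1)p})$ is precisely the generating-function encoding of the paper's bijective step of writing each coordinate as $x_j = p y_j + z_j$ with $0 \le z_j \le p-1$ and $0 \le y_j \le h-1$. The paper counts solutions directly; you extract coefficients from the factored polynomial raised to the $a$-th power --- the same convolution in different clothing.
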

           \begin{proof}
           The number $\binom{a}{\ell p-i}_{h p}$ counts the number of integer solutions to equations 
           \begin{equation*}
              0\leq x_j\leq h p -1, \qquad x_1+\dots+x_a=\ell p -i.
           \end{equation*}
         
           Given a solution $x_1,\dots,x_a$, we write each $x_j$ as
           \begin{equation*}
               x_j=p y_j +z_j, \qquad    0\leq z_j\leq p-1.
           \end{equation*}
           Note that $0\leq y_j\leq h-1$. If $y_1+\dots+y_a=\ell'$ then we have
           \begin{equation*}
              z_1+\dots+z_a=(\ell-\ell')p-i.
           \end{equation*}
           For a fixed $\ell'$, the number of such integer solutions for $(y_1,\dots,y_a,z_1,\dots,z_a)$ equals $\binom{a}{\ell'}_h \binom{a}{(\ell-\ell')p-i}_p$. This gives
           \begin{equation*}
              \sum_{\ell'\geq 0} \binom{a}{\ell'}_h \binom{a}{(\ell-\ell')p-i}_p= \binom{a}{\ell p-i}_{h p}
           \end{equation*}
           which completes the proof.
           \end{proof}
       
        Applying Lemma \ref{lemma: binomial} to \eqref{eq: hilbert series veronese2}, we conclude
        \begin{equation}\label{eq:Hstar hypersimplex}
             \Hstar(\Delta_{k,n};z)(\sigma)=\sum\limits_{i \geq 0} (-1)^{i}\binom{a}{i} \left(\sum\limits_{\ell\geq0} \binom{a}{\ell(k_1-b_1 i)-i)}_{k-b i}(z^{b_1})^{\ell} \right).
        \end{equation}
        
        Now we will count $\sigma$-fixed hypersimplicial decorated ordered set partitions of type $(k,n)$ with winding number $d$.
        The argument follows the structure of the argument in \cite[Section 3]{Kim}, but now we must consider $\sigma$-fixed decorated ordered set partitions. Here we briefly summarize the argument. As before we fix an element $\sigma \in \mathbb{Z}/n\mathbb{Z}$ and denote the order of $\sigma$ with $b$ and set $a=\frac{n}{b}$.
      
        \begin{definition}
        For a decorated ordered set partition $L=((L_1)_{l_1},(L_2)_{l_2},...,(L_m)_{l_m})$, a block $L_i$ is \emph{bad} if $l_i \geq |L_i|$. Let $I (L)=\{L_i \mid L_i \text{ is bad}\}$.
        \end{definition}
        
        Since hypersimplicial decorated ordered set partitions satisfy $1 \leq l_i \leq |L_i|-1$ for all blocks, a decorated ordered set partition is hypersimplicial if and only if $I (L)$ is empty.
        
        \begin{definition}
        For a subset $T \subseteq \{1,2,\cdots,n\} $ that is $\sigma$-fixed, define $\up(T)$ to be a set of all (unordered) partitions of $T$ that are $\sigma$-fixed. For each $S\in \up(T)$, we define $S(\sigma)$ to be the number of orbits of blocks in $S$ under the action of $\sigma$. For example, consider $n=6$ and $\sigma\in \Z/6\Z$ defined by $\sigma(i)=i+3$. Then $T=\{1,2,4,5\}$ is $\sigma$-fixed and $\up(T)$ has $7$ elements which are $\{\{1,2,4,5\}\}$, $\{\{1,2\},\{4,5\}\}$, $\{\{1,4\},\{2,5\}\}$, $\{\{1,5\},\{2,4\}\}$, $\{\{1,4\},\{2\},\{5\}\}$, $\{\{1\},\{4\},\{2,5\}\}$, and $\{\{1\},\{2\},\{4\},\{5\}\}$. Note that
        \begin{align*}
            \{\{1\},\{2\},\{4\},\{5\}\}(\sigma) &= 2,\\
             \{\{1,2\},\{4,5\}\}(\sigma) &= 1.
        \end{align*}
        \end{definition}
        
        \begin{definition}
        For $\sigma$-fixed $T\subseteq \{1,2,\cdots,n\}$ and $S\in \up(T)$,
        define $K_{k,n}^{d} (S)$ to be the set of all $\sigma$-fixed decorated ordered set partitions of type $(k,n)$ with winding number $d$ such that $S \subseteq I (L)$.
        \end{definition}

        \begin{definition}
        For $\sigma$-fixed $T \subseteq \{1,2,\cdots,n\}$, let $H_{k,n}^{d} (T)=\sum\limits_{S \in \up(T)} (-1)^{S(
        \sigma)} |K_{k,n}^{d} (S)|.$
        \end{definition}

        \begin{proposition} \label{prop: hypersimplicial dop number} The number of $\sigma$-fixed hypersimplicial decorated ordered set partitions of type $(k,n)$ with winding number $d$
        is
        \begin{equation}\label{eq: dop counting}\sum_{\substack{T \subseteq \{1,2,...,n\}\\ T: \sigma-fixed}} H_{k,n}^{d} (T).\end{equation}
        \end{proposition}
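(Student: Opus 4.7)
The plan is a standard inclusion--exclusion argument on the ``bad block'' statistic $I(L)$, but carried out equivariantly so that only $\sigma$-fixed sub-partitions are summed over. I would first expand the definition of $H^d_{k,n}(T)$ and swap the order of summation so that the outer variable becomes a $\sigma$-fixed hypersimplicial candidate $L$:
\begin{align*}
\sum_{\substack{T\subseteq[n]\\ T\colon \sigma\text{-fixed}}} H^d_{k,n}(T)
\;=\;\sum_{\substack{T\subseteq[n]\\ T\colon \sigma\text{-fixed}}}\sum_{S\in\up(T)}(-1)^{S(\sigma)}|K^d_{k,n}(S)|
\;=\;\sum_{L}\;\sum_{\substack{S\subseteq I(L)\\ S\colon \sigma\text{-fixed partition}}}(-1)^{S(\sigma)},
\end{align*}
where in the final expression $L$ ranges over all $\sigma$-fixed decorated ordered set partitions of type $(k,n)$ with winding number $d$, and the inner sum is over $\sigma$-fixed unordered collections $S$ of pairwise disjoint nonempty subsets of $[n]$, each of which is a bad block of $L$. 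The swap is justified because requiring $S\subseteq I(L)$ in the definition of $K^d_{k,n}(S)$ amounts to declaring that the blocks of $S$ are bad blocks of $L$, and both sides range over the same set of pairs $(L,S)$.

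Next I would show that the inner sum equals $1$ when $I(L)=\emptyset$ and $0$ otherwise, which directly yields the desired count of hypersimplicial $L$. The case $I(L)=\emptyset$ is immediate: the only admissible $S$ is the empty partition, contributing $(-1)^0=1$. For the other case, note that because $L$ is $\sigma$-fixed, the action of $\sigma$ permutes the elements of $I(L)$ and partitions it into orbits $\ao_1,\dots,\ao_r$ with $r\ge 1$. Any $\sigma$-fixed collection $S\subseteq I(L)$ must be a union of such orbits, so it is determined by a subset $A\subseteq\{\ao_1,\dots,\ao_r\}$; moreover $S(\sigma)$, the number of $\sigma$-orbits of blocks in $S$, equals $|A|$. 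The inner sum therefore collapses to
\begin{equation*}
\sum_{A\subseteq\{\ao_1,\dots,\ao_r\}}(-1)^{|A|}\;=\;(1-1)^r\;=\;0.
\end{equation*}

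The expected obstacles are mostly bookkeeping rather than a deep difficulty. I would need to be careful with two points: (i) that ``$S\in\up(T)$'' really ranges over \emph{$\sigma$-fixed} partitions of $\sigma$-fixed $T$, so that after the swap the summation variable $S$ is exactly a $\sigma$-fixed collection of disjoint subsets, matching how $K^d_{k,n}(S)$ was defined; and (ii) that $S(\sigma)$, which counts orbits of blocks of $S$ under $\sigma$, coincides with the number of $\sigma$-orbits of bad blocks of $L$ that $S$ uses, which is what makes the binomial cancellation work cleanly. Once these identifications are explicit, the rest is the $(1-1)^r$ calculation above, completing the proof of \Cref{prop: hypersimplicial dop number}.
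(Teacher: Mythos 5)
Your proof is correct and follows essentially the same route as the paper: swap the order of summation so that you sum over candidate $\sigma$-fixed decorated ordered set partitions $L$ (using that $L\in K^d_{k,n}(S)$ iff $S\subseteq I(L)$), then observe that the inner signed sum over $\sigma$-fixed $S\subseteq I(L)$ reduces to $\sum_{j}(-1)^j\binom{i}{j}=(1-1)^i$, where $i$ is the number of $\sigma$-orbits in $I(L)$, which vanishes unless $I(L)=\emptyset$. Your write-up is, if anything, a bit more careful about the identification of $\sigma$-fixed $S\subseteq I(L)$ with unions of $\sigma$-orbits, which is exactly the point the paper glosses over slightly.
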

        \begin{proof}
        We have
        \begin{equation*}
            \sum_{\substack{T \subseteq \{1,2,...,n\}\\ T: \sigma-fixed}} H_{k,n}^{d} (T)=\sum\limits_{\substack{T \subseteq \{1,2,...,n\}\\ T: \sigma-fixed}} \left(\sum\limits_{S \in \up(T)}(-1)^{S(\sigma)} \left |K_{k,n}^{d} (S)\right| \right).
        \end{equation*}
        A decorated ordered set partition $L$ belongs to $K^{d}_{k,n} (S)$ if and only if $S$ is a subset of $I(L)$.
        If $I (L)$ is empty then $L$ will be counted once when $S=\varnothing$.
        Now assume $I (L)$ is non empty. Note that $I(L)$ is $\sigma$-fixed; we denote the number of $\sigma$-orbits in $I(L)$ with $i$. The number of $\sigma$-fixed $S\subseteq I(L)$ such that $|S|=j$ equals $\binom{i}{j}$, therefore $L$ will be counted $\binom{i}{j}$ times with the sign $(-1)^j$ as $S$ ranges over all $\sigma$-fixed subsets of $I(L)$ with $S(\sigma)=j$. We conclude that the contribution of $L$ to \eqref{eq: dop counting} is $\sum_{j=0}^{m}(-1)^j \binom{i}{j}=0$. So \eqref{eq: dop counting} equals the number of $\sigma$-fixed hypersimplicial decorated ordered set partitions of type $(k,n)$ with winding number $d$.
        \end{proof}

        \begin{proposition} \label{proposition: second winding bector}
        Given a $\sigma$-fixed $T\subseteq\{1,2,\cdots,n\}$ such that $|T|=bi$, the value 
        $(-1)^{i}H_{k,n}^{d} (T)$ equals the number of $\sigma$-fixed (integer) vectors $(v_1,v_2,\cdots,v_n)$ satisfying
        
        \begin{align}\label{eq: condition1}
        0 \leq v_j \leq k-bi-1 \qquad &\text{if} \qquad j \notin T,\\
        \label{eq: condition2}
        1 \leq v_j \leq k-b i \qquad &\text{if} \qquad j \in T, \\
        \label{eq: condition3}
        v_1+\cdots+v_n=(k-b i)d.&
        \end{align}
         \end{proposition}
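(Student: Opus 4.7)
The plan is to adapt the non-equivariant bijective argument of \cite[Section 3]{Kim} to the $\sigma$-equivariant setting: recast the signed sum defining $(-1)^i H_{k,n}^d(T)$ as an inclusion-exclusion count over $\sigma$-fixed pairs $(L, S)$, cancel most terms via a $\sigma$-equivariant sign-reversing involution, and biject the surviving terms with $\sigma$-fixed vectors. First I would observe that for a $\sigma$-fixed decorated ordered set partition $L$ of type $(k,n)$ with winding number $d$, any $\sigma$-fixed $S \in \up(T)$ satisfying $S \subseteq I(L)$ must consist precisely of those bad blocks of $L$ that lie inside $T$; such $S$ exists exactly when these bad blocks partition $T$. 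So $\sum_{S \in \up(T)}(-1)^{S(\sigma)+i}|K_{k,n}^d(S)|$ collapses to $\sum_L (-1)^{B(L)(\sigma)+i}$, where $L$ ranges over valid $\sigma$-fixed decorated ordered set partitions and $B(L)$ is the induced partition of $T$ by bad blocks of $L$.

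I would then construct a $\sigma$-equivariant sign-reversing involution on the set of such pairs whose bad-block partition $B(L)$ is strictly coarser than the finest $\sigma$-equivariant refinement. The involution acts on an entire $\sigma$-orbit of bad blocks inside $T$ at a time, merging or splitting them (with a corresponding adjustment of the arc-labels $l_i$ to preserve both the winding number $d$ and the $\sigma$-fixedness of the configuration). Each application of the involution changes $B(L)(\sigma)$ by exactly one, flipping the sign. The fixed points, for which $B(L)$ is the finest $\sigma$-equivariant partition of $T$ (so $B(L)(\sigma) = i$), survive with residual sign $(-1)^{2i} = +1$.

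Finally, I would biject the surviving configurations with $\sigma$-fixed vectors in $V_{k,n}^d(T)$ by contracting each bad block $B \in B(L)$ together with $|B|$ units of its following arc, which is possible because $l_B \geq |B|$. The circumference shrinks from $k$ to $k-bi$ and the resulting modified winding vector $v$ satisfies $\sum_j v_j = kd - d\cdot bi = (k-bi)d$, establishing \eqref{eq: condition3}; the upper bounds in \eqref{eq: condition1} and \eqref{eq: condition2} follow from the bounded contracted winding distances, and $v_j \geq 1$ for $j \in T$ holds because every such $j$ begins a bad block of size at least one in $L$, forcing the contracted distance from $j$ to $j+1$ to remain positive.

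The principal obstacle is constructing the $\sigma$-equivariant involution and verifying its involutivity: the merge/split operations must act uniformly on entire $\sigma$-orbits of bad blocks in order to preserve $\sigma$-fixedness, and a canonical rule — such as a fixed linear order on the $\sigma$-orbits of positions — is needed to designate which $\sigma$-orbit of bad blocks the involution acts upon. This orbit-level bookkeeping is the genuinely new combinatorial input beyond the non-equivariant argument of \cite{Kim}.
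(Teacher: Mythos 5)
The paper itself offers no proof here; it merely points to the method of Kim's Proposition~2.22. Your proposal tries to reconstruct that method, and the opening observation is correct: for a fixed $L$, the condition $S\subseteq I(L)$ with $S\in\up(T)$ forces $S$ to be precisely the collection of $L$-blocks contained in $T$, so each valid $L$ contributes a single term $(-1)^{B(L)(\sigma)}$ to $H_{k,n}^d(T)$. The computation $\sum v_j=(k-bi)d$ for the contracted winding vector is also the right bookkeeping.

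The gap is in the middle step, and it is not merely that the involution is left unconstructed (which you acknowledge); the description of its domain and fixed points is inconsistent with what the numbers force. You restrict the involution to those $L$ with $B(L)$ strictly coarser than the singleton partition of $T$ and assert the $L$'s with finest $B(L)$ are the survivors. But when $i=2$, every strictly coarser $B(L)$ has $B(L)(\sigma)=1$, so all those terms carry the same sign and a sign-reversing involution supported only on that set cancels nothing; and "all $L$ with finest $B(L)$" generally overcounts. Concretely, take $n=k=4$, $\sigma=\id$, $T=\{1,2\}$, $d=1$. The winding-number-one DOSPs with $\{1,2\}$ a bad block are $(\{1,2\}_2,\{3,4\}_2)$, $(\{1,2\}_3,\{3,4\}_1)$, $(\{1,2\}_2,\{3\}_1,\{4\}_1)$ (sign $-1$ each), while those with $\{1\}$ and $\{2\}$ both singleton blocks are $(\{1\}_2,\{2\}_1,\{3,4\}_1)$, $(\{1\}_1,\{2\}_2,\{3,4\}_1)$, $(\{1\}_1,\{2\}_1,\{3,4\}_2)$, $(\{1\}_1,\{2\}_1,\{3\}_1,\{4\}_1)$ (sign $+1$ each). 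The signed sum is $-3+4=1$, matching the unique vector $v=(1,1,0,0)$; thus three of the four "finest" configurations must be paired against the three "coarse" ones. So the involution must cross the coarse/fine boundary and fix only one of the four finest $L$'s, contradicting your description. Producing such a $\sigma$-equivariant pairing rule (and then identifying the correct residual set to biject with the vectors) is the genuine content of the proof, and as written your proposal does not supply it and in fact describes a structure that cannot hold.
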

         \begin{proof}
         This can be proved by the same method in the proof of \cite[Proposition~2.22]{Kim}
         \end{proof}

        \begin{corollary}\label{cor: Hknd count}
        Given a $\sigma$-fixed $T\subseteq\{1,2,\cdots,n\}$ such that $|T|=bi$, we have 
        \begin{align*}
        H_{k,n}^{d} (T)=\begin{cases}(-1)^i\binom{a}{\frac{d}{b_1}(k_1-b_1 i)-i }_{k-bi} \qquad &\text{if} \qquad \text{$d$ is a multiple of $b_1$}\\
        0 \qquad &\text{otherwise}.
        \end{cases}
        \end{align*}
        Recall that $b_1=\frac{b}{gcd(b,k)}$ as in \eqref{eq:notation b1}.
        \end{corollary}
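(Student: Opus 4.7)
My plan is to apply \Cref{proposition: second winding bector} and then translate the enumeration problem into a bounded integer partition count on the space of $\sigma$-orbits. By \Cref{proposition: second winding bector}, it suffices to count $\sigma$-fixed integer vectors $(v_1,\ldots,v_n)$ satisfying \eqref{eq: condition1}--\eqref{eq: condition3}. Since $\sigma$ acts on $\{1,\ldots,n\}$ with every orbit of size exactly $b$ (as $\sigma$ has order $b$ and acts by translation by $a = n/b$), there are $a$ orbits, and a $\sigma$-fixed vector is determined by choosing one value per orbit. The hypothesis $|T| = bi$ and $T$ is $\sigma$-fixed means $T$ is a union of exactly $i$ orbits, leaving $a-i$ orbits outside $T$.

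Let $u_1,\ldots,u_a$ denote the chosen values, with $i$ of them (say those indexed by $J \subseteq \{1,\ldots,a\}$) corresponding to orbits inside $T$. Then conditions \eqref{eq: condition1} and \eqref{eq: condition2} become $1 \le u_j \le k-bi$ for $j\in J$ and $0 \le u_j \le k-bi-1$ for $j \notin J$, while the sum condition \eqref{eq: condition3} becomes
\[
b(u_1+\cdots+u_a)=(k-bi)d.
\]
Using $h=\gcd(b,k)$, $b=hb_1$, $k=hk_1$ with $\gcd(b_1,k_1)=1$, this equation has a solution in integers if and only if $b \mid (k-bi)d$, equivalently $b_1 \mid k_1 d$, equivalently $b_1 \mid d$. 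When $b_1 \nmid d$ the count is $0$, accounting for the second case of the corollary.

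Assuming $b_1 \mid d$, write $d = b_1 d'$; dividing through by $h b_1$ the sum becomes $u_1+\cdots+u_a = (k_1-b_1 i)d'$. Performing the shift $u_j' := u_j - 1$ for $j \in J$ and $u_j' := u_j$ otherwise places every $u_j'$ into $\{0,1,\ldots,k-bi-1\}$ with the single constraint
\[
u_1'+\cdots+u_a' = (k_1-b_1 i)d' - i = \tfrac{d}{b_1}(k_1-b_1 i)-i.
\]
The number of such nonnegative integer solutions with each $u_j' \le k-bi-1$ is, by definition, the coefficient of $x^{\frac{d}{b_1}(k_1-b_1 i)-i}$ in $(1+x+\cdots+x^{k-bi-1})^a$, namely $\binom{a}{\frac{d}{b_1}(k_1-b_1 i)-i}_{k-bi}$. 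Multiplying by the sign $(-1)^i$ from \Cref{proposition: second winding bector} yields the desired formula.

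The only subtle step is the divisibility analysis separating the two cases; once one notes that the sum constraint forces $b \mid (k-bi)d$ and that $\gcd(b_1, k_1)=1$, the reduction to a standard bounded composition count is mechanical, so I do not expect a significant obstacle.
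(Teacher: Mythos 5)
Your proof is correct and follows essentially the same route as the paper's: apply Proposition~\ref{proposition: second winding bector}, use the $\sigma$-orbit structure to reduce from $n$ coordinates to $a = n/b$, shift by $1$ on the $i$ orbits inside $T$ to normalize the bounds to $\{0,\ldots,k-bi-1\}$, and read off the count as a bounded composition number. The paper performs the shift $v'_j = v_j - 1$ before extracting the divisibility condition, whereas you check divisibility of the unshifted constraint first; this is a cosmetic reordering, not a different argument.
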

        
        \begin{proof}
        For a $\sigma$-fixed vector $v=(v_1,\cdots,v_n)$ satisfying \eqref{eq: condition1}, \eqref{eq: condition2} and \eqref{eq: condition3}, let $v'=(v'_1,\cdots,v'_n)$ be a vector such that $v'_j=v_j$ if $j \notin T$, and $v'_j=v_j-1$ if $j \in T$. Then $v'$ is also $\sigma$-fixed so we write \begin{equation*}
            v'=(v'_1,\cdots,v'_a,v'_{1},\cdots,v'_a,\cdots,v'_{1},\cdots,v'_a).
        \end{equation*}
        Property \eqref{eq: condition1} and \eqref{eq: condition2} becomes $0 \leq v'_j \leq (k-bi-1)$  for all $j$ and the property \eqref{eq: condition3} becomes 
        \begin{align*}
            b\left(\sum_{j=1}^{a}v'_j\right)=(k-bi)d-bi
            \qquad \rightarrow \qquad \sum_{j=1}^{a}v'_j=\frac{k_1 d}{b_1}-i(d+1).
        \end{align*}
        
        Thus the solution exists only if $d$ is a multiple of $b_1$ and in that case the number of such 
          $v'$ is $\binom{a}{\frac{d}{b_1}(k_1-b_1 i )-i }_{k-bi}$.

        \end{proof}

        \begin{proof}[Proof of \Cref{thm: hypersimplex cyclic group}]
        By \Cref{prop: hypersimplicial dop number} and \Cref{cor: Hknd count}, the number of $\sigma$-fixed hypersimplicial decorated ordered set partitions (of type $(k,n)$ with winding number $d$) is 
            \begin{align*}
            \sum_{\substack{T \subseteq \{1,2,...,n\}\\ T: \sigma-fixed}} H_{k,n}^{d} (T)&=\sum_{i\geq0}\left(\sum_{\substack{|T|=bi\\ T: \sigma-fixed}} H_{k,n}^{d} (T)\right)\\&=\begin{cases}
            \sum_{i\geq0} (-1)^i\binom{a}{i}\binom{a}{\frac{d}{b_1}(k_1-b_1 i)-i }_{k-bi}\qquad &\text{if } \text{$d$ is a multiple of $b_1$,}\\
        0 \qquad &\text{otherwise}.
            \end{cases}
            \end{align*}
            Comparing with \eqref{eq:Hstar hypersimplex} completes the proof.
        \end{proof}
        \begin{example}
        Consider hypersimplicial decorated ordered set partitions of type $(2,4)$:
        \begin{align*}
            &\text{winding number $0$}: \{\{1,2,3,4\}_2\}\\
            &\text{winding number $1$}:  \{\{1,2\}_1\{,3,4\}_1\}, \{\{1,4\}_1\{,2,3\}_1\}\\
            &\text{winding number $2$}: \{\{1,3\}_1\{,2,4\}_1\}.
        \end{align*}
        For $\sigma\in \mathbb{Z}/4\mathbb{Z}$ defined by $\sigma(i)=i+2$, fixed subpolytope $\Delta_{2,4}^{\sigma}$ is the line segment with vertices $(1,0,1,0)$ and $(0,1,0,1)$. The number of lattice points in $t \Delta_{2,4}^{\sigma}$ equals $(t+1)$, thus we conclude
        \begin{align*}\Ehr(\Delta_{2,4}^{\sigma};z)=\sum_{t\geq 0}(t+1)z^t=\frac{1}{(1-z)^2}=\frac{1+2z+z^2}{(1-z^2)^2}\\
        \rightarrow \Hstar(\Delta_{2,4};z)(\sigma)=1+2z+z^2.\end{align*}
       Every hypersimplicial decorated ordered set partition of type $(2,4)$ is $\sigma$-fixed, which gives $\Hstar(\Delta_{2,4};z)(\sigma)$ equals $(1+2z+z^2)$ by Theorem \ref{thm: hypersimplex cyclic group}.
        
        \end{example}

        \subsubsection{Cyclic Group Action on Prime Permutahedra}
   

        \begin{lemma}\label{lem:cyclicactionpipolynomial}
        The $\Hstar$-series of the permutahedron $\Pi_n$ under the action of $\Z/n\Z$ permuting the standard basis vectors is polynomial.
        \end{lemma}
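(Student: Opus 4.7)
The plan is to recognize $\Pi_n$ as the graphic zonotope of the complete graph $K_n$ and then invoke \Cref{thm:graphic zonotope polynomial H*} to check, one conjugacy class at a time, that $H^*(\Pi_n;z)(\tau)$ is polynomial for every $\tau \in \Z/n\Z$. Writing $\Z/n\Z = \langle \sigma \rangle$ with $\sigma = (1\,2\,\cdots\,n)$, each power $\tau = \sigma^k$ has a completely uniform cycle type: $d := \gcd(k,n)$ disjoint cycles, each of common length $\ell := n/d$. Because $K_n$ is complete, every cycle $\tau_i$ lies in $\binom{\ell}{2}$ edges of $K_n$ within itself, so $E_{ii} = \binom{\ell}{2}$ and $\deg(\tau_i,\tau_i) = \ell - 1$. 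Also, the connectivity graph $C_{K_n}(\tau)$ is the complete graph $K_d$.

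With this data in hand, I split into cases on the parity of $\ell$. If $\ell$ is odd, then condition \eqref{item:all odd} of \Cref{thm:graphic zonotope polynomial H*} is immediate. If $\ell$ is even then $\deg(\tau_i,\tau_i) = \ell - 1$ is odd, ruling out \eqref{item:even even}, so we must verify condition \eqref{item:even odd}, namely
\[
d = \#\{\text{even }\ell_i\} \;>\; \max\bigl\{\#\text{edges of a $(\tau,K_n)$-incompatible subforest of } K_d\bigr\}.
\]

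To bound the right-hand side I compute $2$-valuations in the compatibility criterion of \Cref{def:compatibility}. Since every $\ell_i = \ell$ and every $E_{jj} = \ell(\ell-1)/2$, for a connected component $T$ of a subforest $F$ we get $\val(\sum_{j\in T}E_{jj}) = \val(|T|) + \val(\ell) - 1$, so $T$ contributes compatibility exactly when $|T|$ is even. In particular, a subforest is $(\tau,K_n)$-compatible iff every connected component has even size. A spanning tree of $K_d$ (with $d-1$ edges) is the unique candidate with one component of size $d$; it is incompatible precisely when $d$ is odd. When $d$ is even, the largest forest with an odd component has two odd components and thus at most $d-2$ edges. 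Either way, the maximum number of edges in an incompatible subforest is at most $d-1$, so $d > d - 1 \ge$ this maximum, and condition \eqref{item:even odd} holds.

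The arithmetic of cycle types is the only subtle step; once the parities of $\ell$, $\ell-1$, and $|T|$ are sorted out, every element of $\Z/n\Z$ satisfies one of \eqref{item:all odd}, \eqref{item:even even}, or \eqref{item:even odd}. The main obstacle is keeping the bookkeeping of these parities consistent across the two subcases of $\ell$ even, since the maximum incompatible forest behaves slightly differently depending on the parity of $d$; but in both subcases the strict inequality $d >$ max incompatible edges survives with room to spare. Applying the corollary to \Cref{thm:graphic zonotope polynomial H*} then yields that $\Hstar(\Pi_n;z)$ is polynomial under the $\Z/n\Z$-action.
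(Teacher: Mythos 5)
Your proof is correct, but it takes a different route from the paper. The paper's proof is a one-liner: it cites \cite[Lemma~5.2]{ASV}, which says that for $\sigma \in S_n$ with cycle type $(\ell_1,\dots,\ell_m)$, the specialization $\Hstar(\Pi_n;z)(\sigma)$ is a polynomial if and only if the number of even parts is $0$, $m$, or $m-1$; for a power of an $n$-cycle all cycles have the same length, so the number of even parts is $0$ or $m$, and polynomiality follows. You instead recognize $\Pi_n$ as the graphic zonotope of the complete graph $K_n$ and re-derive the needed criterion from scratch via \Cref{thm:graphic zonotope polynomial H*}, computing $E_{ii} = \binom{\ell}{2}$, $\deg(\tau_i,\tau_i)=\ell-1$, $C_{K_n}(\tau)=K_d$, and then analyzing $2$-valuations to show that $(\tau,K_n)$-compatibility of a subforest $F$ is equivalent to all components of $F$ having even size. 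The arithmetic there is right (when $\ell$ is even, $\val(\ell-1)=0$, giving $\val(\sum_{j\in T}E_{jj})=\val(|T|)+\val(\ell)-1$, so the compatibility inequality reduces to $\val(|T|)\geq 1$), and the bound of $d-1$ on the maximum edge count of an incompatible subforest is correct in both parity subcases, including the degenerate $d=1$ case. What your approach buys is self-containment within the paper's own Section~3.1 machinery, at the cost of more explicit bookkeeping; the paper's approach is shorter because it outsources the criterion to \cite{ASV}, but is less illuminating about \emph{why} the pole at $z=-1$ cancels. Since \Cref{thm:graphic zonotope polynomial H*} is itself presented in the paper as a generalization of the \cite{ASV} analysis, your argument is essentially a direct re-specialization of that generalization back to $K_n$, which is a legitimate and arguably more instructive proof.
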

        \begin{proof}
        By Lemma 5.2 of \cite{ASV},
        for $\sigma \in S_n$ with cycle type $\lambda = (\ell_1, \dots, \ell_m)$, $\Hstar(\Pi_n;z)$ is polynomial if and only if the number of even parts in $\lambda$ is 0, $m$, or $m-1$.
        For any element $\sigma$ of $\Z/n\Z$, all cycles of $\sigma$ have the same length, so $\Hstar(\Pi_n;z)$ is polynomial, and thus $\Hstar$ itself is a polynomial.
        \end{proof}

        We now focus on the case when $p$ is prime and $\Z/p\Z$ acts on $\Pi_p$.

    
        \begin{theorem}\label{thm:1modp}
        The $h^*$-polynomial of the permutahedron $\Pi_p$ for prime $p$ has coefficients equivalent to $1 \mod p$.
        \end{theorem}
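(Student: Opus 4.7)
The plan is to exploit orthogonality of characters in $R(\Z/p\Z)$ to turn the identity $h^*(\Pi_p;z) = H^*(\Pi_p;z)(\id)$ into a congruence modulo $p$, by comparing it against the values of $H^*(\Pi_p;z)$ at non-identity group elements. Throughout I take $p$ to be an odd prime; the case $p=2$ is immediate.

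First I would compute $H^*(\Pi_p;z)(\sigma^k)$ for each $k \in \{1,\dots,p-1\}$. Because $p$ is prime, every non-identity element of $\Z/p\Z$ acts on the standard basis of $\R^p$ as a single $p$-cycle, so $\Pi_p^{\sigma^k}$ is the barycenter of $\Pi_p$, namely the lattice point $\bigl(\tfrac{p-1}{2},\dots,\tfrac{p-1}{2}\bigr)$. Hence $\Ehr(\Pi_p^{\sigma^k};z) = \tfrac{1}{1-z}$; since the eigenvalues of $\rho(\sigma^k)$ are exactly the $p$-th roots of unity, $\det(I - z\cdot\rho(\sigma^k)) = 1 - z^p$, and so
\[
H^*(\Pi_p;z)(\sigma^k) \;=\; \frac{1-z^p}{1-z} \;=\; 1 + z + \cdots + z^{p-1}.
\]

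Next, by \Cref{lem:cyclicactionpipolynomial}, $H^*(\Pi_p;z)$ is a polynomial with coefficients in the character ring $R(\Z/p\Z) = \Z\langle\chi_0,\dots,\chi_{p-1}\rangle$, where $\chi_j(\sigma^k) = \zeta^{jk}$ with $\zeta = e^{2\pi i/p}$. Writing $H^*(\Pi_p;z) = \sum_{j=0}^{p-1} c_j(z)\chi_j$ with $c_j(z) \in \Z[z]$, orthogonality yields
\[
c_j(z) \;=\; \frac{1}{p}\sum_{k=0}^{p-1} H^*(\Pi_p;z)(\sigma^k)\,\zeta^{-jk}.
\]
Specializing to $j=1$ and using $\sum_{k=1}^{p-1}\zeta^{-k} = -1$, this reduces to
\[
c_1(z) \;=\; \frac{1}{p}\bigl[\,h^*(\Pi_p;z) - (1 + z + \cdots + z^{p-1})\,\bigr].
\]
Since $c_1(z) \in \Z[z]$, this forces $h^*(\Pi_p;z) \equiv 1 + z + \cdots + z^{p-1} \pmod p$, so every coefficient of $h^*(\Pi_p;z)$ is congruent to $1$ modulo $p$.

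The only point to check carefully is that $H^*(\Pi_p;z)$ really has integer (rather than merely rational) expansions on the irreducible basis; this is automatic because $\sum_{t\ge 0} \chi_{tP}\,z^t$ has coefficients in the nonnegative integral span of the irreducibles, while $\det(I - z\cdot\rho)$ is a polynomial whose coefficients are signed exterior-power characters, so the product lies in $R(\Z/p\Z)[[z]]$. No other serious obstacle is expected.
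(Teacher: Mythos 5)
Your argument is correct and takes a genuinely different route from the paper's proof. The paper proves this theorem directly from the zonotopal (half-open parallelotope) decomposition of $\Pi_p$: it shows that $\Z/p\Z$ acts freely on the nonempty forests/independent sets indexing the parallelotopes, so all contributions to $\Ehr(\Pi_p;z)$ except the origin come in orbits of size $p$, leaving only $(1-z)^{p-1}/(1-z)^p$ modulo $p$, and $\binom{p-1}{i}\equiv(-1)^i\pmod p$ finishes it. You instead work entirely within the equivariant framework: compute $\Hstar(\Pi_p;z)(\sigma^k) = 1+z+\cdots+z^{p-1}$ for $k\ne 0$ from the trivial fixed polytope, expand $\Hstar$ in the irreducible basis, and extract the congruence from the fact that the expansion coefficients $c_j(z)$ must lie in $\Z[z]$ because $\Hstar \in R(G)[[z]]$. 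Your approach is shorter and more conceptual, and it nicely exposes the link between this theorem and the formula of \Cref{thm:cyclicaction_trivialfixed}: your $c_1(z)$ is exactly the $\tfrac{h_i^*-1}{p}$ appearing there, so the congruence is precisely the integrality required by \Cref{cor:Hstarprime}. The paper's proof, by contrast, is elementary (no representation theory) and also makes the identity $\Ehr_{\Pi_p}(z) = \tfrac{1}{1-z} + p\sum_{T\in\mathcal S}\Ehr(\hobox T;z)$ explicit, which is reused in neighboring results. Two small cosmetic points in your write-up: the barycenter of $\Pi_p$ (with vertices the permutations of $(1,\dots,p)$) is $(\tfrac{p+1}{2},\dots,\tfrac{p+1}{2})$, not $(\tfrac{p-1}{2},\dots)$, though this does not affect the argument; and to be fully precise you could note that $h^*(\Pi_p;z)$ has degree $p-1$, so the polynomial $c_1(z)$ has degree at most $p-1$ and the congruence covers all $p$ coefficients.
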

        
        \begin{proof}
        If $p=2$, then $h^*(z) = 1$. Let $p > 2$.
        Let $\mathcal{I}$ be the set of all nonempty linearly independent subsets of $\{e_j-e_k:1\le j<k\le p\}$, and let $T$ be a linearly independent subset in $\mathcal{I}$.
        The elements of $\mathcal{I}$ correspond to forests on $p$ labeled vertices (excluding the graph with no edges) \cite[Lemma 9.6]{beck_robins}.
        Associate to each $T$ the half-open parallelepiped $\hobox T = \sum_{e_j - e_k \in T}$ ($0,e_j-e_k$].
        As discussed in \cite[Chapter 9]{beck_robins}, the $(p-1)$-dimensional permutahedron $\Pi_p$ is equal to the disjoint union of translates of the parallelotopes:
            $$
            \Pi_p = \{0\} \cup \bigcup_{T \in \mathcal I} \left( \sum_{e_j - e_k \in T} ( 0,e_j-e_k ] \right).
            $$
        Let $\sigma = (1\,2\,\dots\,p)$ be a generator of $\Z/p\Z$.
        The group $\Z/p\Z$ acts on the independent sets $\mathcal{I}$ by cyclically permuting the forest vertex labels.
        Each orbit has size $p$, as follows.
        For $1\leq k<p$, suppose $\sigma^k T = T \in \mathcal I$.
        Then for each edge $(i,j)$ in the forest corresponding to $T$, the edge $(|i+k|_p,|j+k|_p)$ is also in the forest.
        As $k$ is coprime to $p$, the forest thus contains $p$ edges and therefore a cycle, which is a contradiction. Thus $\sigma^k$ does not fix any $T \in \mathcal I$, and $|\Z/p\Z T | = p$. 
        Let $\mathcal S \subset \mathcal I$ be a transversal containing one representative from each orbit in $\mathcal{I}$.
        Each half-open parallelepiped in an orbit has the same Ehrhart series, and is translated by an integral vector. 
        Thus,
            \begin{align*} 
            \Ehr_{\Pi_p}(z) &= \frac{1}{1-z}+ p \sum_{T \in \mathcal S } \Ehr(\hobox T;z) \\
             &=\frac{(1-z)^{p-1}}{(1-z)^{p}}+ \sum_{T\in \mathcal S} \frac{p\cdot h^*(\hobox T;z)(1-z)^{p-|T|-1 }}{(1-z)^{p}}
             \end{align*}
    
        The coefficients of $(1-z)^{p-1}$ are equivalent to one mod $p$, because the coefficient of $z^i$ is $\binom{p-1}{i}(-1)^i$, and 
        \begin{equation*}
            \binom{p-1}{i}=\frac{(p-1)\cdots(p-i)}{i!} \equiv \frac{(-1)\cdots(-i)}{i!} \mod p \equiv \frac{(-1)^i i!}{i!} \mod p \equiv (-1)^i \mod p.
        \end{equation*}
        Hence the claim follows.
        \end{proof}
        
        \begin{example}
        For the permutahedron $\Pi_3$ we have $ \mathcal S = \{ \{e_1 - e_2\} , \{ e_1 - e_2, e_2 - e_3\}\}$.
        Thus 
            $$
            \Ehr_{\Pi_3}(z) = \frac{ 1- 2z+ z^2 }{(1-z)^3}+ \frac{3t(1-z)}{(1-z)^3} + \frac{3(z+z^2)}{(1-z)^3}
            = \frac{1 + 4z + z^2}{(1-z)^3}.
            $$
        \end{example}


     \begin{theorem}\label{thm:cyclicaction_trivialfixed}
        Let $P \subset \R^n$ be an $(n-1)$-dimensional lattice polytope invariant under the action of the cyclic group $\Z/n\Z $ permuting the coordinates of $\R^n$.
        Furthermore, suppose that for each non-identity element $g \in \Z/n\Z$, $P^g$ is a single lattice point.
        Then,
        $$
        \Hstar(P;z) = \sum_{i=0}^{n-1} \left( \chi_0 + \frac{h_i^* -1}{n} \sum \chi_{j} \right) z^i,
        $$
        where $h_i$ is the coefficient of $z^i$ in the $h^*$-polynomial of $P$, and $\{\chi_0,\dots,\chi_{n-1} \}$ are the irreducible representations of $\Z/n\Z$.
        \end{theorem}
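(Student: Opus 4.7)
The plan is to compute $\Hstar(P;z)$ by evaluating it at each group element and then inverting character orthogonality to extract the coefficients. Writing $\Hstar(P;z) = \sum_{i=0}^{n-1} H_i^\ast z^i$ with $H_i^\ast \in R(\Z/n\Z)\otimes\Q$, and expanding each class function in the irreducible basis as $H_i^\ast = \sum_{j=0}^{n-1} a_{i,j}\chi_j$, the task reduces to determining the scalars $a_{i,j}$.

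The starting point is the identity $\Hstar(P;z)(g) = \EE(P;z)(g)\cdot\det(I - z\cdot\rho(g)) = \Ehr(P^g;z)\cdot\det(I - z\cdot\rho(g))$, using \cite[Lemma 5.2]{Stapledon}. At $g = \id$, the denominator $\det(I - z\cdot\rho(\id)) = (1-z)^n$ clears the Ehrhart series to yield $\Hstar(P;z)(\id) = h^\ast(P;z)$. For any non-identity $g$, the hypothesis gives $\Ehr(P^g;z) = 1/(1-z)$; combined with $\det(I - z\cdot\rho(g)) = 1 - z^n$ when $g$ acts as a single $n$-cycle, this produces $\Hstar(P;z)(g) = (1 - z^n)/(1 - z) = 1 + z + \cdots + z^{n-1}$. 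Reading off coefficients of $z^i$, I expect $H_i^\ast(\id) = h_i^\ast$ and $H_i^\ast(g) = 1$ for every non-identity $g$ and every $0 \leq i \leq n-1$.

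I would then recover the $a_{i,j}$ via the character inner product and the orthogonality relation $\sum_{g \in \Z/n\Z}\overline{\chi_j(g)} = n\,\delta_{j,0}$, splitting off the identity term:
\[
a_{i,j} = \langle H_i^\ast, \chi_j\rangle = \frac{1}{n}\left(h_i^\ast - 1 + \sum_{g \in \Z/n\Z}\overline{\chi_j(g)}\right) = \begin{cases} 1 + (h_i^\ast - 1)/n, & j = 0,\\ (h_i^\ast - 1)/n, & j \neq 0.\end{cases}
\]
These reassemble into $H_i^\ast = \chi_0 + \frac{h_i^\ast - 1}{n}\sum_{j=0}^{n-1}\chi_j$, matching the claimed formula.

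The hard part will be justifying that every non-identity $g \in \Z/n\Z$ acts as a single $n$-cycle, so that $\det(I - z\cdot\rho(g)) = 1 - z^n$. For $g = \sigma^k$ with $\gcd(n,k) = m > 1$, the element decomposes into $m$ cycles of length $n/m$ and $\det(I - z\cdot\rho(g)) = (1 - z^{n/m})^m$, whose quotient by $1 - z$ is no longer equal to $1 + z + \cdots + z^{n-1}$. The hypothesis that $P^g$ be a single lattice point for every non-identity $g$ is what rescues the argument: since $P$ lies in the $G$-invariant hyperplane $\{\sum_i x_i = c\}$ and the fixed subspace of $g$ meets this hyperplane in an affine subspace of dimension $m - 1$, requiring $\dim P^g = 0$ forces $m = 1$ outside of highly degenerate tangential configurations. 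Under this reduction the formula evaluates correctly at every element of $\Z/n\Z$, and the recovered $H_i^\ast$ therefore agree with the desired formula on the whole character table, completing the proof.
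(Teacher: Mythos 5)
Your proposal takes essentially the same route as the paper: both verify the claimed formula by evaluating at each group element, using $\Hstar(P;z)(\id) = h^*(P;z)$ and $\Hstar(P;z)(g) = (1-z^n)/(1-z)$ for $g\neq\id$. The paper phrases this as "check the formula specializes correctly at every conjugacy class," whereas you reverse the direction and recover the coefficients $a_{i,j}$ by character orthogonality; these are logically interchangeable.

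Where your version is more careful is on the step $\det(I - z\cdot\rho(g)) = 1 - z^n$ for $g \neq \id$, which the paper asserts without comment. You correctly observe that this requires every non-identity $g$ to act as a single $n$-cycle (i.e.\ that $n$ be prime), and that for $g = \sigma^k$ with $m = \gcd(n,k) > 1$ the computation would go wrong. Your hedge ``outside of highly degenerate tangential configurations'' can in fact be removed: the $G$-fixed vertex barycenter of $P$ lies in the relative interior of $P$, so the fixed affine subspace of $g$ passes through the interior of $P$, and hence $P^g = P \cap \operatorname{Fix}(g)$ has dimension equal to $\dim\operatorname{Fix}(g) \cap \aff(P) = m-1$. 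Thus the hypothesis $\dim P^g = 0$ for all $g \neq \id$ genuinely forces $m = 1$ for all $k$, i.e.\ $n$ prime, and there are no degenerate exceptions to worry about. With that remark inserted, your proof is complete and slightly more self-contained than the paper's.
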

        \begin{proof}
        It is enough to show that our formula for $\Hstar(P;z)$ specializes to the fixed Ehrhart series for each conjugacy class.
        Let $\zeta$ be an $n$-th root of unity. 
        Then 
        $$
        \sum_{i=0}^{n-1}\zeta ^ i = \frac{1-\zeta^n}{1-\zeta} = 
        \begin{cases} 
        n \text { if $\zeta$ =1 (L'Hospital's)} \\ 
        0 \text{ else}. 
        \end{cases}
        $$
        For the identity element, we have
            $$
        	\Hstar(P;z)(\id) = \sum_{i=0}^{n-1}\left(1 + \frac{h_i^*-1}{n} n \right)z^i = \sum_{i=0}^{n-1} h_i^*z^i.
            $$
        For $g\ne \id$, our formula gives $\Hstar(P;z)(g)= \sum_{i=0}^{n-1}(1 + 0 )z^i$.
        Combining this with $\det(\id - \rho(g)z) = (1-z^n)$ gives
            $$
        	\frac{\Hstar(P;z)(g)}{\det(\id -\rho(g)z)} = \frac{1+z+\dots+z^n}{1-z^n} = \frac{1}{1-z}.
            $$
        \end{proof}

        Let $\Pi_n$ be the permutahedron of dimension $n-1$ in $\mathbb R^n$ for $n$ prime.
        Let $\mathbb Z / n \mathbb Z$ act on $\Pi_n$ by cyclically permuting coordinates of $\mathbb R^n$.
        
        \begin{corollary}\label{cor:Hstarprime}
        For an odd, prime number $n$, the $\Hstar$-series of $\Pi_n$ under the action of $\mathbb Z / n\mathbb Z$ is
        	$$
        	\Hstar(\Pi_n;z) = \sum_{i=0}^{n-1} \left(\chi_0 + \frac{h_i^* -1}{n}\sum_{j=0}^{n-1} \chi_j\right) z^i,
        	$$
        where $h_i^*$ is the coefficient of $z^i$ in the $h^*$-polynomial of $\Pi_n$.
        \end{corollary}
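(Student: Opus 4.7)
The strategy is to recognize this as a direct application of Theorem \ref{thm:cyclicaction_trivialfixed} with $P = \Pi_n$. The formula in the corollary is literally the conclusion of that theorem, so the work reduces to verifying the two hypotheses: that $\Pi_n$ is an $(n-1)$-dimensional lattice polytope invariant under the $\Z/n\Z$-action permuting coordinates, and that $\Pi_n^g$ is a single lattice point for every non-identity $g \in \Z/n\Z$.

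The first hypothesis is immediate from the standard realization of $\Pi_n$ as the convex hull of permutations of $(1,2,\dots,n)$ (equivalently, as the graphic zonotope of the complete graph). The key step is the second hypothesis. Since $n$ is prime, every non-identity $g \in \Z/n\Z$ has order $n$, and when the cyclic subgroup is embedded into $S_n$ in the standard way, $g$ acts on $\R^n$ as a single $n$-cycle on the coordinates. The set of vectors in $\R^n$ fixed by an $n$-cycle is precisely the diagonal line $\{(c,c,\dots,c) : c\in\R\}$. Intersecting this line with $\Pi_n$, which lies in the affine hyperplane $x_1+\dots+x_n = \tfrac{n(n+1)}{2}$, yields the unique point $\left(\tfrac{n+1}{2},\dots,\tfrac{n+1}{2}\right)$. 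Because $n$ is odd, $\tfrac{n+1}{2}$ is an integer, and so this point is a lattice point, as required. Alternatively, one can read off the same conclusion from the vertex description of the fixed subpolytope in Theorem \ref{thm:gzfixedpoly}: for $\Gamma = K_n$ and $\sigma$ a single $n$-cycle, the $\sigma$-connectivity graph $C_{K_n}(\sigma)$ is a single vertex, so the Minkowski sum in \eqref{eq:gzmink} reduces to the single point $\tfrac{1}{2}\deg(\sigma,\sigma)\be_{\sigma}$, which is integral exactly when $\deg(\sigma,\sigma) = n-1$ is even, i.e.\ when $n$ is odd.

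With both hypotheses verified, Theorem \ref{thm:cyclicaction_trivialfixed} applies verbatim to $P = \Pi_n$ and produces the stated formula. There is no real obstacle beyond the fixed-point computation: the arithmetic coincidence that $(n+1)/2 \in \Z$ precisely for odd $n$ is exactly why the hypothesis of oddness (in addition to primality) appears in the statement, and is what makes the equivariant theorem applicable here without modification.
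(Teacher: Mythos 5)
Your proposal is correct and matches the intent of the paper, which states the corollary immediately after Theorem~\ref{thm:cyclicaction_trivialfixed} with no separate proof — the expectation being exactly the verification you supply. You correctly identify the only substantive checkpoint: that for $n$ an odd prime, every non-identity $g \in \Z/n\Z$ acts as an $n$-cycle on coordinates, so $\Pi_n^g$ is the intersection of the diagonal with the hyperplane $x_1 + \cdots + x_n = n(n+1)/2$, namely the single point $\left(\tfrac{n+1}{2},\dots,\tfrac{n+1}{2}\right)$, which is integral precisely because $n$ is odd. Your alternate derivation via Theorem~\ref{thm:gzfixedpoly} and the half-integral shift $\tfrac{1}{2}\deg(\sigma,\sigma)\besi$ with $\deg(\sigma,\sigma)=n-1$ is also a valid cross-check. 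No gaps.
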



        We conclude our discussion of prime permutahedra by applying Stapledon's \Cref{cor:seventen} to show the existence of a $\Z/p\Z$-invariant nondegenerate hypersurface.
        In \Cref{sec:hypersurfaces} we continue in a similar vein, establishing a criterion that guarantees the nonexistence of such a hypersurface under the action of the symmetric group.
        
        \begin{theorem}
        Let $p$ be prime.
        Then for the action of $\Z/p\Z$ on $\Pi_p$, there exists a $\Z/p\Z$-invariant nondegenerate hypersurface with Newton polytope $\Pi_p$.
        Consequently $\Hstar(\Pi_p;z)$ is effective and polynomial.
        \end{theorem}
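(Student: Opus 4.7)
The plan is to apply Stapledon's combinatorial criterion \Cref{cor:seventen}, which reduces the problem to verifying that every face $Q$ of $\Pi_p$ with $\dim Q > 1$ contains a lattice point fixed by its stabilizer $G_Q \le \Z/p\Z$. The key structural observation is that since $|\Z/p\Z| = p$ is prime, Lagrange's theorem forces every $G_Q$ to be either trivial or all of $\Z/p\Z$, collapsing the verification into two cases.

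For a proper face $Q$, I would argue that $G_Q$ must be trivial. Faces of $\Pi_p$ are indexed by ordered set partitions $(B_1, \ldots, B_k)$ of $[p]$, and the coordinate-permuting action of $\sigma \in \Z/p\Z$ sends such a face to the face corresponding to $(\sigma(B_1), \ldots, \sigma(B_k))$; the stabilizer therefore consists of those $\sigma$ that preserve each $B_i$ setwise. Since the generator $(1\,2\,\cdots\,p)$ acts as a single $p$-cycle on $[p]$, the only $\sigma$-invariant subsets are $\emptyset$ and $[p]$. For a proper face ($k \ge 2$) at least one block is a proper nonempty subset of $[p]$, so no nonidentity element of $\Z/p\Z$ stabilizes $Q$, and $G_Q = \{e\}$. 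Any lattice vertex of $Q$ is then $G_Q$-fixed.

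The remaining case is $Q = \Pi_p$ itself, which needs to be checked precisely when $\dim \Pi_p = p - 1 > 1$, i.e., $p \ge 3$. Here $G_Q = \Z/p\Z$, and the unique $\Z/p\Z$-fixed point of $\R^p$ lying in $\Pi_p$ is the barycenter $\tfrac{p+1}{2}(1,1,\ldots,1)$. Since $p$ is an odd prime, $\tfrac{p+1}{2} \in \Z$ and this is a lattice point. When $p = 2$, the polytope $\Pi_2$ is one-dimensional, so the hypothesis of \Cref{cor:seventen} is vacuous.

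Having verified the hypothesis of \Cref{cor:seventen} in all cases, I conclude that the toric variety of $\Pi_p$ admits a $\Z/p\Z$-invariant nondegenerate hypersurface with Newton polytope $\Pi_p$. By Stapledon's Theorem 7.7 this implies that $\Hstar(\Pi_p; z)$ is effective, and by \Cref{lem:effectiveimpliespoly} it is also polynomial. There is no real obstacle to this proof—the primality of $p$ does almost all the work by ruling out intermediate stabilizers, and the oddness of $p$ is precisely what is needed to make the barycenter a lattice point; the argument is in essence a direct application of the criterion.
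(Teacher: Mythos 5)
Your proof is correct and takes essentially the same route as the paper: apply \Cref{cor:seventen}, show proper faces of dimension $>1$ have trivial stabilizer, observe that $\Pi_p$ itself contains the $\Z/p\Z$-fixed barycenter $\frac{p+1}{2}(1,\dots,1)$ (a lattice point because $p$ is odd), and handle $p=2$ trivially. Your stabilizer argument — that the generator is a $p$-cycle so the only invariant subsets of $[p]$ are $\emptyset$ and $[p]$, hence no nonidentity element can fix each block of a proper ordered set partition setwise — is a slightly cleaner way to reach the conclusion than the paper's phrasing (which appeals to the blocks not all having the same size), and you also make the effective~$\Rightarrow$~polynomial step via \Cref{lem:effectiveimpliespoly} explicit, which the paper leaves implicit.
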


        \begin{proof}
        If $p=2$, then $\Pi_p$ is a line segment and the condition of \Cref{cor:seventen} is trivially satisfied.
        Suppose $p$ is an odd prime.
        Then every proper face of $\Pi_p$ with dimension $>1$ corresponds to some ordered set partition of $p$ with at least $2$ parts, which do not all have the same size and hence cannot possibly be fixed under the action of any $\sigma\in \Z/p\Z$ other than the identity.
        So the stabilizer of all proper faces is trivial, and hence all vertices are fixed under this stabilizer.
        Now consider the face $\Pi_p$.
        Since $n$ is odd, $\Pi_p$ contains the lattice point $(\frac{p+1}{2},\dots,\frac{p+1}{2})$, which is fixed under the action of $\Z/p\Z$.
        Hence by \Cref{cor:seventen}, the $H^*$-series is effective.
        \end{proof}
        
        \begin{remark} \Cref{thm:1modp} combined with \Cref{cor:Hstarprime} gives an alternative proof of effectiveness. Since $p$ is prime, $\frac{h_i^* -1}{p}$ is a nonnegative integer, so we can explicitly see $\Hstar(\Pi_p;z)$ is effective.
        \end{remark}

    \subsection{Describing Nondegenerate Hypersurfaces}\label{sec:hypersurfaces} 

    
    
    Up until now, we have mainly focused our attention on criteria \eqref{item:effective} and \eqref{item:polynomial} from \Cref{conj:Stapledon}.
    We now turn to \eqref{item:nondegen}, detailing what can be said about the existence of a $S_n$-invariant nondegenerate hypersurface in the case of orbit polytopes.

    \subsubsection{Orbit polytopes}
    
    Weight polytopes arise in representation theory of semisimple Lie algebras, and the combinatorics of these polytopes have been studied a great deal in recent years \cite{Postnikov,KhareRidenour,Khare,ACEP,Supina}.
    Here we specifically focus on weight polytopes of the Type A root system, we refer to these as \emph{orbit polytopes} (also sometimes known as \emph{permutohedra}).
    
    \begin{definition}
    An \emph{orbit polytope} is a polytope of the form $\conv\{\sigma\cdot \bw:\sigma\in S_n\}$ where $\bw\in\Z^n$ is any integer point, and $S_n$ acts by permuting its coordinates.
    \end{definition}
    
    Using the machinery of Hopf monoids, it is shown in \cite[Proposition 5.4]{AguiarArdila} that the faces of generalized permutahedra are products of generalized permutahedra.
    It is easily shown as a corollary (see \cite[Example 3.5]{Supina}) that the faces of orbit polytopes are products of orbit polytopes.
    Here, we are particularly interested in orbit polytopes which have some rectangular $2$-dimensional faces.
    Rectangles themselves are not orbit polytopes, so the only way for a rectangle to arise as a product of orbit polytopes is as the product of exactly two line segments and any number of points.
    
    
    
    
    \begin{example}\label{ex: max face}
    Let $\bw=(0,1,2,3)\in\R^4$.
    Then the orbit polytope $P$ of $\bw$ is the $3$-dimensional standard permutahedron.
    Consider the linear functional $\by = (0,0,1,1)\in(\R^4)^\ast$.
    Then the $\by$-maximal face of $P$ is the convex hull of the points in the $S_4$-orbit of $\bw$ that maximize $\by$.
    These are exactly the points where the $0$ and $1$ are in the first two positions and  the $2$ and $3$ are in the last two positions: $(0,1,2,3)$, $(1,0,2,3)$, $(0,1,3,2)$, and $(1,0,3,2)$.
    The convex hull of these four points gives a square face of $P$; it can be viewed as the product of a line segment living in a $2$-dimensional subspace of $\R^4$ with vertices $(0,1)$ and $(1,0)$, and another line segment living in the $2$-dimensional orthogonal complement with vertices $(2,3)$ and $(3,2)$.
    \end{example}
    
    \begin{theorem}\label{thm: op no hypersurface}
    Let $P$ be an orbit polytope which has a rectangular $2$-dimensional face $F$ with odd side lengths, and let $G$ be the symmetric group acting by the standard representation.
    Then the toric variety of $P$ does not admit a $G$-invariant non-degenerate hypersurface.
    \end{theorem}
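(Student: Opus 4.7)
The strategy mirrors the analysis of the $3$-cube in \Cref{thm:counterex}: restricting a $G$-invariant defining equation to the distinguished face $F$ must yield a product of binomials that forces a singularity inside the torus.

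First, I would pin down the structure of $F$. By \cite[Proposition 5.4]{AguiarArdila} (see also \cite[Example 3.5]{Supina}), faces of orbit polytopes are products of orbit polytopes, so the rectangular $2$-face $F$ decomposes as a product $L_1 \times L_2$ of two $1$-dimensional orbit polytopes. Since $1$-dimensional orbit polytopes for $S_n$ are edges along root directions, we may write $L_i$ as a segment parallel to $\be_{a_i} - \be_{b_i}$ of odd lattice length $2k_i+1$, and orthogonality of the two sides of the rectangle forces $\{a_1,b_1\} \cap \{a_2,b_2\} = \emptyset$. The stabilizer $G_F$ then contains $S_{\{a_1,b_1\}} \times S_{\{a_2,b_2\}} \cong \Z/2\Z \times \Z/2\Z$, each factor reflecting the corresponding edge across its midpoint.

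Next I would parametrize the lattice points of $F$ as $\bv_0 + s(\be_{a_1} - \be_{b_1}) + t(\be_{a_2} - \be_{b_2})$ for $0 \le s \le 2k_1+1$ and $0 \le t \le 2k_2+1$, where $\bv_0$ is a fixed vertex. Writing $y_i := x^{\be_{a_i}-\be_{b_i}}$, the restriction of any $G$-invariant $f=\sum c_{\bv} x^{\bv}$ to $F$ becomes $f|_F = x^{\bv_0}\sum_{s,t} c_{s,t}\, y_1^s y_2^t$, and $G$-invariance combined with the $\Z/2\Z \times \Z/2\Z$ action forces $c_{s,t} = c_{2k_1+1-s,\,t} = c_{s,\,2k_2+1-t}$. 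For each fixed $t$ the one-variable polynomial $\sum_s c_{s,t}\, y_1^s$ is palindromic of odd degree $2k_1+1$, hence divisible by $(y_1+1)$; running this over all $t$ gives $(y_1+1)\mid f|_F$ in $\C[y_1,y_2]$. The symmetric argument yields $(y_2+1)\mid f|_F$, and coprimality of these two linear polynomials produces the factorization $f|_F = x^{\bv_0}(y_1+1)(y_2+1)\,h(y_1,y_2)$ for some polynomial $h$.

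Finally I would exhibit a singular torus point. Because $\be_{a_1}-\be_{b_1}$ and $\be_{a_2}-\be_{b_2}$ have disjoint supports, one may choose $x \in T$ with $x_{a_i}=1$, $x_{b_i}=-1$ (and arbitrary values elsewhere), giving $y_1=y_2=-1$. At this point $f|_F$ vanishes, and applying the product rule to $\partial f|_F/\partial x_i$ produces three terms, each retaining a factor of $(y_1+1)$ or $(y_2+1)$; all therefore vanish. Thus $f|_F$ fails to be smooth on $T$, contradicting the nondegeneracy hypothesis. The main subtleties will be verifying that a rectangular face of an orbit polytope must come from two disjoint-support root directions (needed for both the stabilizer description and the surjectivity used in the last step) and carefully combining the two univariate palindromic divisibilities into a single joint factorization via coprimality.
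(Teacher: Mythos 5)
Your proof is correct and follows essentially the same strategy as the paper: decompose $F$ as a product of two edges in disjoint-support root directions, observe that $G$-invariance forces a symmetric coefficient structure which makes $f|_F$ divisible by $(y_1+1)(y_2+1)$, and conclude the gradient vanishes at the torus point $y_1=y_2=-1$. Your palindromic-divisibility framing is a mildly cleaner packaging of the paper's argument, which instead writes $f|_F$ as an explicit sum of binomial pairs $x_a^{u}x_b^{v}+x_a^{v}x_b^{u}$ with $u+v$ odd and notes that each such pair vanishes when $x_b=-x_a$, yielding the same double zero.
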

    \begin{proof}
    Let $F$ be a rectangular $2$-face of $P$ with odd side lengths.
    We have established that $F$ can be written as the product of two line segments and any number of points.
    When we restrict a hypersurface in the toric variety of $P$ to the face $F$, the points will simply contribute a monomial factor to the expression for the hypersurface, which will not affect whether it is smooth at $F$.
    Hence we are free to ignore the points and focus our attention on the two line segments.
    
    \begin{figure}
        \centering
        \begin{tikzpicture}[scale=2]

\coordinate (1234) at (0,1);
\coordinate (2134) at (1.5,1);
\coordinate (1243) at (0,0);
\coordinate (2143) at (1.5,0);
\node[anchor=south east] at (1234) {$(w_1,w_2,w_3,w_4)$};
\node[anchor=south west] at (2134) {$(w_2,w_1,w_3,w_4)$};
\node[anchor=north east] at (1243) {$(w_1,w_2,w_4,w_3)$};
\node[anchor=north west] at (2143) {$(w_2,w_1,w_4,w_3)$};

\draw (1234)--(2134)--(2143)--(1243)--cycle;

\draw[dashed, very thick, black!60] (.75,1.5) -- (.75, -.5);
\draw[dashed, very thick, black!60] (-.5,.5) -- (2,.5);

\foreach \i in {0,.2,...,.6}{
    \foreach \j in {.6,.8,1}{
        \fill[black!60] (\i,\j) circle(1pt);
    }
}

\fill (1234) circle(1.2pt);
\fill (2134) circle(1.2pt);
\fill (1243) circle(1.2pt);
\fill (2143) circle(1.2pt);






\end{tikzpicture}
        \caption{A rectangular face arising as the product of two line segments.
        We are interested in the case where $w_1<w_2\le w_3<w_4$ and $w_2-w_1$ and $w_4-w_3$ are odd.
        Then the $S_n$-orbits of integer points in the rectangle are in correspondence with the integer points in the upper left quadrant of the figure, and each orbit consists of precisely one point in each quadrant.}
        \label{fig:oddrect}
    \end{figure}
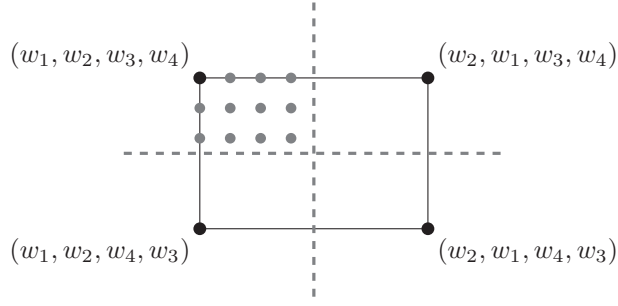
    
    By analyzing the linear functionals that are maximized at the face $F$, one can show that the two line segments must have the form $\conv\{(w_1,w_2),(w_2,w_1)\}$ and $\conv\{(w_3,w_4),(w_4,w_3)\}$ where $w_1<w_2\le w_3<w_4$ (see \Cref{ex: max face}).
    Since the segments must have odd length, we further know that $w_2-w_1$ and $w_4-w_3$ are odd.
    If we look at the orbits of the lattice points in $F$ under the action of $S_n$, we will find that each orbit contains exactly four points, with exactly one in each quadrant of $F$ (see \Cref{fig:oddrect}).
    So the parameters appearing in the equation for our hypersurface restricted to $F$ can be identified with points in one quadrant of $F$.
    We will denote them by $c_{i,j}$ where $0\le i\le \lfloor\frac{w_2-w_1}{2}\rfloor$ and $0\le j \le \lfloor\frac{w_4-w_3}{2}\rfloor$.
    Using this, a generic $S_n$-invariant hypersurface restricted to the face $F$ of $P$ has the following equation:
        \[
        \sum_{j=0}^{\lfloor\frac{w_4-w_3}{2}\rfloor}\left(\left(x_3^{w_3+j}x_4^{w_4-j}+x_3^{w_4-j}x_4^{w_3+j}\right)\left(\sum_{i=0}^{\lfloor\frac{w_2-w_1}{2}\rfloor} c_{i,j}\left( x_1^{w_1+i}x_2^{w_2-i} + x_1^{w_2-i}x_2^{w_1+i} \right) \right)\right)=0
        \]
    
    Since $w_2-w_1$ is odd, then for each $i$ one of $w_1+i$ and $w_2-i$ will be odd and the other even.
    Likewise, for each $j$ one of $w_3+j$ and $w_4-j$ will be odd and the other even.
    Hence each term $ x_1^{w_1+i}x_2^{w_2-i} + x_1^{w_2-i}x_2^{w_1+i} $ vanishes when $x_2=-x_1$, regardless of the choices for $c_{i,j}$, and each term $x_3^{w_3+j}x_4^{w_4-j}+x_3^{w_4-j}x_4^{w_3+j}$ vanishes when $x_4=-x_3$. 
    Because the equation has a zero of multiplicity $2$ whenever $x_2=-x_1$ and $x_4 = -x_3$, the gradient vanishes at this locus, so no $S_n$-invariant hypersurface in the toric variety of $P$ is smooth at $F$.
    \end{proof}
    
    We will now characterize exactly which orbit polytopes have rectangular $2$-dimensional faces with odd side lengths.
    To do this, we will associate any given $S_n$-orbit polytope with a composition of $n$ using the following setup.
    
    \begin{setup}\label{setup: orbit polytopes}
    Given a point $\bw'\in \Z_n$, let $\bw$ be the representative from the $S_n$-orbit of $\bw'$ with coordinates in increasing order; $\bw = (w_1,\dots,w_1,w_2,\dots,w_2,\cdots,w_k,\dots,w_k)$ where $w_1<w_2<\dots<w_k$.
    From this, construct a composition $\alpha = (\alpha_1,\dots,\alpha_k)$ of $n$ where $\alpha_i$ is the number of $w_i$'s that appear in the coordinates of $\bw$.
    For example, if $\bw = (0,1,1,3,3,3,4,5)$ then the corresponding composition of $8$ would be $(1,2,3,1,1)$.
    \end{setup}
    
    Given two compositions $\alpha= (\alpha_1,\dots,\alpha_k)$ and $\beta = (\beta_1,\dots,\beta_\ell)$, we define their \emph{concatenation} to be $\alpha\concat\beta:= (\alpha_1,\dots,\alpha_k,\beta_1,\dots,\beta_\ell)$ and their \emph{near-concatenation} to be $\alpha\nconcat\beta:=(\alpha_1,\dots,\alpha_k+\beta_1,\dots,\beta_\ell)$.
    
    \begin{lemma}\label{lem:rectface}
    An orbit polytope has a rectangular $2$-dimensional face if and only if its corresponding composition $\alpha$ satisfies one of the following criteria:
        \begin{enumerate}[(i)]
            \item $\alpha$ has four or more parts, or
            \item $\alpha$ has three parts and the middle part is $\ge 2$.
        \end{enumerate}
    \end{lemma}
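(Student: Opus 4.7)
The plan is to leverage the fact, established for generalized permutahedra in \cite{AguiarArdila} and specialized to orbit polytopes in \cite{Supina}, that every face of $O_\alpha$ factors as a product of smaller orbit polytopes. Concretely, list the entries of $\bw$ in decreasing order so that the sorted word begins with $\alpha_k$ copies of $w_k$, then $\alpha_{k-1}$ copies of $w_{k-1}$, and so on down to $\alpha_1$ copies of $w_1$. Each face of $O_\alpha$ corresponds to an ordered set partition $(B_1, \ldots, B_m)$ of $[n]$: the first $|B_1|$ entries of the sorted word are assigned to $B_1$, the next $|B_2|$ to $B_2$, and so on, and the face equals $O_{\gamma^{(1)}} \times \cdots \times O_{\gamma^{(m)}}$, where $\gamma^{(j)}$ is the composition recording the multiplicities of the distinct $w$-values in the $j$-th consecutive chunk.

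First, I will translate the statement into combinatorics. Since rectangles are not themselves orbit polytopes, any rectangular $2$-face must arise as a product whose nontrivial factors are exactly two line segments. The orbit of $\bw$ has exactly $\frac{n!}{\prod_i \alpha_i!}$ vertices, and checking this count shows that the only orbit polytope that is a line segment is $O_{(1,1)}$, while the point orbit polytopes are the $O_{(c)}$. Hence a rectangular $2$-face corresponds precisely to an ordered partition whose sub-compositions consist of exactly two copies of $(1,1)$ together with any number of single-part compositions.

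Next, I will observe the main structural constraint: a sub-composition equal to $(1,1)$ is a size-$2$ chunk containing one copy of $w_i$ and one copy of $w_{i+1}$ for some consecutive pair, so it is forced to straddle the unique boundary in the sorted word between the $\alpha_i$-run and the $\alpha_{i+1}$-run. Each of the $k-1$ boundaries hosts at most one such straddle, so two $(1,1)$-chunks require two distinct boundaries. This immediately forces $k \ge 3$ and rules out the cases $k = 1$ (a single point) and $k = 2$ (only one boundary is available).

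Finally, the proof splits into cases on whether the two straddled boundaries are adjacent. In the non-adjacent case, the two straddles touch four distinct runs and each contributes a single element (available since $\alpha_\ell \ge 1$), while the unused entries within each run can be grouped freely into single-value chunks; such a pair of boundaries exists precisely when $k \ge 4$. In the adjacent case, which is forced when $k = 3$, the shared middle run must supply one element to each of the two straddles, so the middle part must satisfy $\alpha_2 \ge 2$; conversely when $\alpha_2 \ge 2$ one directly builds the desired partition. Combining these cases yields the claimed dichotomy. The main step requiring care is the initial translation into the product-of-orbit-polytopes framework, ruling out the possibility that some rectangular $2$-face arises in a way not captured by an ordered set partition; once that face factorization is invoked, the remaining bookkeeping is elementary.
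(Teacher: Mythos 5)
Your proof is correct and takes essentially the same approach as the paper: both reduce to the fact from \cite{Supina} (Proposition~4.19) that faces of orbit polytopes are products of orbit polytopes, identify a rectangular $2$-face as a product of exactly two line segments $O_{(1,1)}$ with points, and then analyze when two such factors can be fit into $\alpha$. Your chunk-of-sorted-word framework is an explicit rephrasing of the paper's concatenation/near-concatenation language, and your boundary-straddling case analysis (adjacent vs.\ non-adjacent boundaries) supplies the bookkeeping that the paper compresses into the sentence ``the resulting $\alpha$ is as described.''
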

    \begin{proof}
    Proposition~4.19 of \cite{Supina} implies that for an orbit polytope with composition $\alpha$, any face can be described as a product of orbit polytopes with compositions $\beta^{(1)},\dots,\beta^{(r)}$ from which $\alpha$ can be obtained by some combination of concatenations and near-concatenations.
    Moreover, every such collection of compositions $\beta^{(1)},\dots,\beta^{(r)}$ gives a face of the orbit polytope.
    In order for this face to be a rectangle, we need two of the compositions $\beta$ to be $(1,1)$ (the composition of a line segment) and the remaining $\beta$'s to be $(1)$ (the composition of a point).
    When such a collection of compositions is combined together with any sequence of concatenations and near-concatenations, the resulting $\alpha$ is as described.
    \end{proof}
    
    \begin{corollary}\label{cor:oddrectface}
    An orbit polytope with composition $\alpha$ arising from the point $\bw$ (as in \Cref{setup: orbit polytopes}), that satisfies the condition of \Cref{lem:rectface}, has a rectangular $2$-face with odd side lengths if and only if one of the following criteria is satisfied:
        \begin{enumerate}[(i)]
            \item There exist $i$ and $j$ with $1\le i<i+1<j<j+1\le k$ such that the differences $w_{i+1}-w_i$ and $w_{j+1}-w_j$ are both odd.
            \item There exists an $i$ with $1\le i\le k-2$ such that $\alpha_{i+1}\ge 2$ and the differences $w_{i+1}-w_i$ and $w_{i+2}-w_{i+1}$ are both odd.
        \end{enumerate}
    \end{corollary}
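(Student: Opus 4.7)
The plan is to leverage the face-decomposition machinery already invoked in the proof of \Cref{lem:rectface} and then match the two possible ways a rectangle can arise against the two cases of the statement.

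First, I would recall the structural fact: by \cite[Proposition~4.19]{Supina}, any 2-face of the orbit polytope with composition $\alpha$ is a product of orbit polytopes with compositions $\beta^{(1)},\dots,\beta^{(r)}$ that combine, in some order, under concatenations $\concat$ and near-concatenations $\nconcat$ to produce $\alpha$. For the face to be a rectangle, exactly two of the $\beta^{(i)}$ must be $(1,1)$ (the compositions of line segments) while the remaining $\beta^{(i)}$ are $(1)$ (points), as used in the proof of \Cref{lem:rectface}. Each $(1,1)$ factor swaps a single pair of values drawn from the sorted sequence $w_1<w_2<\dots<w_k$; the analysis in \Cref{ex: max face} and the first paragraph of the proof of \Cref{thm: op no hypersurface} shows that such a swap necessarily involves an \emph{adjacent} pair $(w_a,w_{a+1})$ and produces a line segment of length $w_{a+1}-w_a$.

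Next I would enumerate the ways two such $(1,1)$ factors can be fit into the decomposition of $\alpha$. Because each $(1,1)$ factor consumes one unit from each of two consecutive positions of $\alpha$, two such factors interact in exactly one of two ways. Either (a) they live in disjoint ranges of $\alpha$, joined by a concatenation and using value pairs $(w_i,w_{i+1})$ and $(w_j,w_{j+1})$ with $i+1<j$; or (b) they share a single value in the middle, joined by a near-concatenation, which identifies the last part of the first $(1,1)$ with the first part of the second $(1,1)$ and thereby forces the corresponding part $\alpha_{i+1}$ of $\alpha$ to be at least $2$. In the second case the two swapped adjacent pairs are $(w_i,w_{i+1})$ and $(w_{i+1},w_{i+2})$. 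These are the only possibilities because the two $(1,1)$ pieces cannot share more than one boundary part and cannot skip parts in between.

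Translating each case: case (a) produces a rectangle with side lengths $w_{i+1}-w_i$ and $w_{j+1}-w_j$ (indices as in condition (i) of the corollary), while case (b) produces a rectangle with side lengths $w_{i+1}-w_i$ and $w_{i+2}-w_{i+1}$ (indices as in condition (ii)). The rectangle has odd side lengths precisely when both these differences are odd, and every rectangular $2$-face of the orbit polytope arises this way. Combining the two implications yields the equivalence claimed by the corollary.

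The main obstacle is not logical but bookkeeping: one must carefully verify that a $(1,1)$ factor of a face decomposition can only correspond to a pair of \emph{adjacent} values $w_a,w_{a+1}$, and that two such factors combine exclusively via $\concat$ (disjoint pairs) or $\nconcat$ (sharing exactly one value with $\alpha_{i+1}\ge 2$). Both statements follow by unwinding how the (near-)concatenation operations on compositions correspond to refinements of the ordered set partition indexing the face, but extracting them from \cite[Proposition~4.19]{Supina} is the step that requires care; once this is in hand, the rest is immediate.
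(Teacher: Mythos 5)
The paper gives no explicit proof of this corollary; it is presented as following directly from \Cref{lem:rectface} together with the analysis of rectangular faces in \Cref{ex: max face} and the proof of \Cref{thm: op no hypersurface}. Your reconstruction is correct and follows the route the authors clearly intend: invoke \cite[Proposition~4.19]{Supina} as in \Cref{lem:rectface} to reduce to choosing two $(1,1)$ factors, observe that each $(1,1)$ factor is forced onto a pair of \emph{adjacent} distinct values $w_a, w_{a+1}$ (because each $\beta^{(j)}$ occupies a run of consecutive positions in the sorted $\bw$), and then split into the two ways the occupied positions can interact: completely disjoint (giving (i)) or sharing the position of a single $w_{i+1}$, which is possible only when $\alpha_{i+1}\ge 2$ (giving (ii)). You correctly flag the adjacency claim as the one step that must be checked carefully; one tiny clarification worth making is that it is not a choice between a single $\concat$ or a single $\nconcat$, but rather whether the two runs of parts in $\alpha$ occupied by the two $(1,1)$'s are disjoint or share exactly one part — the intermediate $(1)$'s and the particular sequence of operations are immaterial to this dichotomy. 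That phrasing avoids edge cases where, e.g., a $(1)$ between the two $(1,1)$'s is near-concatenated into one of them. With that adjustment, your argument is complete and essentially identical to the intended one.
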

    
    \subsubsection{Hypersimplices}
    
    We let the symmetric group $S_n$ acts on the hypersimplex $\Delta_{k,n}$ by permuting coordinates. The following theorem implies that $\Hstar$-series of the hypersimplex $\Delta_{k,n}$ is effective.
    
    \begin{theorem}\label{thm: hypersimplex hypersurface}
    The toric variety of the hypersimplex $\Delta_{k,n}$ admits an $S_n$-invariant non degenerate hypersurface.
    \end{theorem}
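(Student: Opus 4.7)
The plan is to show directly that the unique (up to scalar) $S_n$-invariant polynomial supported on $\Delta_{k,n}\cap \Z^n$ already cuts out a non-degenerate hypersurface. Since the lattice points of $\Delta_{k,n}$ are its $\binom{n}{k}$ vertices, which form a single $S_n$-orbit, any $S_n$-invariant section is a scalar multiple of the $k$-th elementary symmetric polynomial $e_k(x_1,\dots,x_n)$. I will therefore prove that $\{e_k = 0\}$ is non-degenerate, meaning $e_k|_Q$ is smooth on the ambient torus for every face $Q$ of $\Delta_{k,n}$.

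The faces of $\Delta_{k,n}$ are indexed by disjoint pairs $S_0,S_1 \subseteq [n]$: the face $F_{S_0,S_1}$ is obtained by setting $x_i = 0$ for $i \in S_0$ and $x_j = 1$ for $j \in S_1$, and is combinatorially a hypersimplex $\Delta_{k',m}$ with $k' = k - |S_1|$ and $m = n - |S_0| - |S_1|$. A direct calculation gives the restriction $e_k|_{F_{S_0,S_1}}(x) = \bigl(\prod_{j \in S_1} x_j\bigr)\,e_{k'}(y)$, where $y$ collects the free coordinates. On $(\C^*)^n$, the $\prod x_j$ factor is nonzero, and Euler's relation $\sum_i y_i\,\partial e_{k'}/\partial y_i = k'\,e_{k'}$ forces $e_{k'}(y) = 0$ whenever $\nabla e_{k'}(y) = 0$. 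Since faces of dimension at most $1$ are automatically handled, the theorem reduces to the following lemma: \emph{for every $m \ge 2$ and $1 \le k' \le m-1$, $\nabla e_{k'}(y)$ never vanishes on $(\C^*)^m$.}

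To prove the lemma, I will assume for contradiction that some $y \in (\C^*)^m$ satisfies $e_{k'-1}(y_{[m]\setminus\{i\}}) = 0$ for every $i \in [m]$, and then run a descending induction on $j$, starting at $j = k'-1$, that establishes $e_j(y_{[m]\setminus\{i\}}) = 0$ for every $i \in [m]$ and every integer $j$ with $0 \le j \le k'-1$. The engine of the induction consists of the counting identities
\[
\sum_{i=1}^m e_j(y_{[m]\setminus\{i\}}) = (m-j)\, e_j(y), \qquad \sum_{i=1}^m y_i^{-1}\, e_j(y_{[m]\setminus\{i\}}) = \Bigl(\sum_{i=1}^m y_i^{-1}\Bigr) e_j(y) - (m-j+1)\, e_{j-1}(y),
\]
together with the recursion $e_j(y) = e_j(y_{[m]\setminus\{i\}}) + y_i\,e_{j-1}(y_{[m]\setminus\{i\}})$. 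Given the inductive hypothesis at level $j$, the first identity yields $e_j(y) = 0$, the second then yields $e_{j-1}(y) = 0$, and substitution into the recursion (using $y_i \ne 0$) gives $e_{j-1}(y_{[m]\setminus\{i\}}) = 0$ for every $i$, closing the inductive step. Iterating down to $j = 0$ produces $e_0(y_{[m] \setminus \{i\}}) = 1 = 0$, the desired contradiction.

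The main obstacle is verifying that the coefficients $(m-j)$ and $(m-j+1)$ stay nonzero throughout the induction so that the identities can be solved for $e_j(y)$ and $e_{j-1}(y)$. Both are bounded below by $2$ on the range $0 \le j \le k' - 1 \le m - 2$, so the induction runs cleanly; combining the lemma with the face analysis then completes the proof.
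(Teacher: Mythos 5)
Your proof is correct and follows essentially the same route as the paper: both take $f = e_k$, reduce each face restriction to a monomial times a smaller elementary symmetric polynomial $e_{k'}$, and rule out critical points of $e_{k'}$ on $(\C^*)^m$ using the sum identity $\sum_i e_j(y_{[m]\setminus\{i\}}) = (m-j)e_j(y)$ together with the recursion $e_j(y) = e_j(y_{[m]\setminus\{i\}}) + y_i\,e_{j-1}(y_{[m]\setminus\{i\}})$. The paper packages the descent as an induction on $k$ (transporting a putative singular point of $\{e_k=0\}$ to one of $\{e_{k-1}=0\}$) while you unroll it explicitly down to the contradiction $e_0=1=0$; as a minor remark, your second counting identity (yielding $e_{j-1}(y)=0$) is superfluous, since once $e_j(y)=0$ is known the recursion alone gives $e_{j-1}(y_{[m]\setminus\{i\}})=0$.
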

    \begin{proof}
    Let $e_k (x_1, x_2,\ldots ,x_n)$ be the $k$-th elementary symmetric polynomial in variables $x_1,\ldots,x_n$ i.e: 
    \begin{equation*}
        e_k (x_1, x_2,\cdots ,x_n)=\sum_{\substack{I\subseteq\{1,2,\ldots,n\}\\|I|=k}}\prod_{i\in I}x_i. 
    \end{equation*}
    Consider the hypersurface $H\subset (\mathbb{C}^{*})^{n}$ defined by the equation $e_k (x_1, x_2,\cdots ,x_n)=0$. It is straightforward to check that $H$ is $S_n$-invariant.
    
    We claim that $H$ is smooth using the induction on $k$. The base case $k=1$ is trivial as $\frac{\partial e_1 (x_1, x_2,\cdots x_n)}{\partial x_i}=1\neq0$. Assume the claim is true for $k-1$ and consider the case $k$. Suppose there exists a non smooth point $(a_1,a_2,\cdots,a_n) \in H$, then we have
    \begin{align*}
        &e_k(a_1,a_2,\ldots,a_n)=0\\
        &\frac{\partial e_k}{\partial x_i}(a_1,a_2,\ldots,a_n)=e_{k-1}(a_1,a_2,\ldots,\hat{a_i},\cdots,a_n)=0 \qquad \text{for $1\leq i\leq n$}.
    \end{align*}
   Since $\sum\limits_{i=1}^{n} e_{k-1}(a_1,a_2,\cdots,\hat{a_i},\cdots,a_n)=(n-k+1)e_{k-1}(a_1,a_2,\cdots,a_n)$, we obtain   \begin{equation}\label{eq: ek-1 zero}
       e_{k-1}(a_1,a_2,\ldots,a_n)=0.
   \end{equation}

From $e_{k-1}(a_1,a_2,\ldots,a_n)=e_{k-1}(a_1,a_2,\ldots,\hat{a_i},\ldots,a_n)+a_i e_{k-2}(a_1,a_2,\ldots,\hat{a_i},\ldots,a_n)$ and using \eqref{eq: ek-1 zero} gives 
\begin{equation}\label{eq: ek-2 zero}
    e_{k-2}(a_1,a_2,\ldots,\hat{a_i},\ldots,a_n)=0 \qquad \text{for $1\leq i\leq n$}.
\end{equation}
By the induction hypothesis, there is no $(a_1,a_2,\cdots,a_n)\in (\mathbb{C}^{*})^{n}$ satisfying \eqref{eq: ek-1 zero} and \eqref{eq: ek-2 zero}.

It remains to show $e_k(x_1,x_2,\ldots,x_n)\vert_{F}$ defines a smooth hypersurface inside a torus for every face $F$. Let $F$ be the face that maximizes the linear functional $L(x_1,x_2,\ldots,x_n)=\sum_{i=1}^{n}c_i x_i.$ We partition the set $\{1,2,\ldots,n\}$ with $S_1,S_2,\ldots,S_\ell$ so that:
\begin{itemize}
    \item For $i,j\in S_p$, $c_i=c_j$.
    \item For $p<q$ and $i\in S_p, j\in S_q$, $c_i>c_j$.
\end{itemize}
Let $h$ be the integer such that $\sum_{i=1}^{h}|S_i|\leq k$ and $\sum_{i=1}^{h+1}|S_i|> k$. Then vertices of $\Delta_{k,n}$ that maximizes $L$ are given by $(x_1,x_2,\ldots,x_n)$  such that
\begin{itemize}
    \item For $i\in S_p$ with $p\leq h$, $x_i=1$.
    \item For $i\in S_p$ with $p> h+1$, $x_i=0$.
    \item The number of $i$'s such that $x_i=1$ for $i\in S_{h+1}$ equals $(k-\sum_{i=1}^{h}|S_i|)$. For other $i$'s, $x_i=0$.
\end{itemize}
We conclude that $e_k(x_1,x_2,\ldots,x_n)\vert_{F}$ equals monomial times elementary symmetric functions with variables $\{x_i\}_{i \in S_{h+1}}$. Using the same argument, this defines a smooth hypersurface inside a torus. 
    \end{proof}

\section{Further Questions}\label{sec:further questions}
    
    
    \begin{question}
    \Cref{sec:zonotopes} uses zonotopal decompositions to compute the equivariant Ehrhart series of zonotopes of the Type A root system.
    How can this technique be adapted to general root systems?
    See \cite{ArdilaBeckMcWhirter} for some progress in this direction.
    \end{question}
    
    \begin{question}
    Can we characterize graphs $\Gamma$ for which every $\sigma\in\Aut(\Gamma)$ satisfies one of the conditions of \Cref{thm:graphic zonotope polynomial H*}?
    \end{question}
    
    \begin{question}
    Theorem 1.4 of \cite{BeckJochemkoMcCullough} gives a formula for the $\hstar$-polynomial of a zonotope in terms of the basis activity of its corresponding matroid.
    Is there an equivariant analogue of this formula that describes the $\Hstar$-series of a zonotope in terms relating to the $G$-action on its matroid?
    \end{question}
    
    \begin{question}
    For which $n$ is there a $G$-invariant triangulation as in Theorem \ref{thm:permrep} or Theorem \ref{thm:eeseries2} of the permutahedron $\Pi_n \in \R^n$ under the cyclic group action permuting the standard basis vectors?
    Can Theorem \ref{thm:eeseries2} be extended to compute equivariant Ehrhart theory using symmetric zonotopal tilings?
    \end{question}
    
    \begin{question}
    Can we find large families of polytopes exhibiting the $G$-invariant half-open triangulations described in Theorems \ref{thm:permrep} and \ref{thm:eeseries2}?
    \end{question}
    
    \begin{question}
    \Cref{thm: hypersimplex cyclic group} gives a combinatorial description of the $\Hstari$ characters for the action of $\Z/n\Z$ on the hypersimplex $\Delta_{k,n}$.
    However, we also know from \Cref{thm: hypersimplex hypersurface} that the $\Hstar$-series for the $S_n$ action on $\Delta_{k,n}$ is effective.
    Can we find a combinatorial interpretation of these characters $\Hstari$ of $S_n$?
    \end{question}
    
    \begin{question}
    \Cref{thm: op no hypersurface} shows that any orbit polytope containing a rectangular $2$-face with odd side lengths does not admit an $S_n$-invariant non-degenerate hypersurface.
    Is the converse true as well?
    In other words, are rectangular $2$-faces with odd length sides the only obstruction to the existence of an $S_n$-invariant nondegenerate hypersurface in the toric variety of an orbit polytope?
    \end{question}
    
    \begin{question}
    Suppose that the $G$-invariant polytope $P$ admits an invariant non-degenerate hypersurface, or has polynomial/effective $\Hstar$-series.
    Must the same then be true for the action of $G$ on positive integer dilations of $P$?
    \Cref{tab:dilates} summarizes what we know about the action of $S_4$ on the dilates of $\Pi_4$.
    \end{question}
    
    \begin{table}
    \centering
\renewcommand*{\arraystretch}{2.5}
\begin{tabular}{|c|c|c|c|}
    \hline
         & $\Hstar$ polynomial? & $\Hstar$ effective? & $S_4$-inv.~non-deg.~hypersurface?\\
   \hline
        $\Pi_4$ & No\footnote{\label{foot:ASV}\cite{ASV}} & No\cref{foot:ASV} & No\footnote{\label{foot:square}\Cref{thm: op no hypersurface}}\\
    \hline
        $2\Pi_4$ & Yes\footnote{\label{foot:integral}All fixed polytopes are integral.} & Yes\footnote{\label{foot:sage}Verified in \texttt{Sagemath}.} & ?\\ 
    \hline
        $3\Pi_4$ & No\cref{foot:sage} & No\cref{foot:sage} & No\cref{foot:square}\\ 
    \hline
        $4\Pi_4$ & Yes\cref{foot:integral} & Yes\cref{foot:sage} & ?\\
    \hline
        $5\Pi_4$ & No\cref{foot:sage} & No\cref{foot:sage} & No\cref{foot:square}\\
    \hline
        Even dilates & Yes\cref{foot:integral} & ? & ?\\
    \hline
        Odd dilates & ? & ? & No\cref{foot:square}\\
    \hline
\end{tabular}
    \caption{Conditions of \Cref{conj:Stapledon} when $S_4$ acts on dilates of $\Pi_4$.}
    \label{tab:dilates}
    \end{table}
    
    \begin{question}
    Although condition \eqref{item:nondegen} of \Cref{conj:Stapledon} was proven to not be implied by conditions \eqref{item:effective} and \eqref{item:polynomial} (\Cref{thm:counterex}), it may still be the case that \eqref{item:effective} and \eqref{item:polynomial} are equivalent.
    Does polynomiality of the $\Hstar$-series imply effectiveness in general?
    Furthermore, what additional hypotheses need to be added to \Cref{conj:Stapledon} in order to make it true?
    \end{question}
    
    \begin{question}
    A topic of great interest in classical Ehrhart theory is determining which polytopes have $\hstar$-vectors that are \emph{unimodal}; i.e., each coefficient is weakly larger the previous, until a peak is reached, after which point each coefficient is weakly smaller than the previous.
    In the equivariant setting, we can interpret unimodality by looking at the differences $\Hstari-\Hstarimo$ and asking that these differences form a sequence consisting of effective characters up to a certain point, after which all the differences are negatives of effective characters.
    Which polytopes exhibit this equivariant unimodality property, and what parallels can be drawn with the classical concept of $\hstar$-unimodality?
    \end{question}
    
    \begin{question}
    The Ehrhart $f^\ast$-vector is a change-of-basis transformation of the $h^\ast$-vector, and $f^\ast$-positivity for polyhedral complexes is a weaker property than $\hstar$-positivity.
    What is the equivariant analogue of the $f^\ast$-vector, and what can we say about its effectiveness?
    This question was suggested by Katharina Jochemko.
    \end{question}

\section{Acknowledgements}

We are grateful to Paco Santos and Alan Stapledon for sharing with us the counterexample of \Cref{thm:counterex}, and for a great deal of helpful email correspondence.
We also thank Vic Reiner for proposing this problem and Federico Ardila, Matt Beck, Christian Haase, Katharina Jochemko, and Kris Shaw for fruitful conversations.
Jean-Philippe Labb{\'e} provided useful advice in the development of the \texttt{Sagemath} package of \Cref{appendix:sage}.
Katharina Jochemko, as well as the Discrete Geometry group at FU Berlin, contributed travel funding for research visits that were instrumental in this project.

\appendix
\section{Calculating the \texorpdfstring{$\Hstar$}{H*}-series in \texttt{Sagemath}}\label{appendix:sage}

Elia implemented \texttt{Sagemath} functionality for calculating the $\Hstar$-series. For more detailed documentation, see \cite{Elia2022}.
It is open-source and staged for release with \texttt{Sagemath} version 9.6.
We show how to compute the $\Hstar$-series of the permutahedron $\Pi_4$ under the action of the symmetric group. First, we create the permutahedron, and set its backend to be \texttt{normaliz}, a necessary step for the usage of these methods.
Furthermore we create $\Pi_4$'s \texttt{restricted\_automorphism\_group}, which is the group of linear transformations mapping the polytope to itself and such that $d$-dimensional faces are mapped to $d$-dimensional faces. 
The output of the \texttt{restricted\_automorphism\_group} must be set to \texttt{permutation}. This means that every group element is expressed as a permutation of the vertices of the polytope. 
Later, we will create $S_4$ as a subgroup of the \texttt{restricted\_automorphism\_group}.
\small
\begin{lstlisting}
    sage: Pi4 = polytopes.permutahedron(4, backend='normaliz')
    sage: Pi4
    A 3-dimensional polyhedron in ZZ^4 defined as the convex hull of 24 vertices
    sage: G = Pi4.restricted_automorphism_group(output='permutation')
    sage: G
    Permutation Group with generators 
    [(1,6)(3,12)(4,8)(5,14)(9,18)(11,20)(15,19)(17,22),
    (0,1)(2,3)(4,5)(6,7)(8,9)(10,11)(12,13)(14,15)(16,17)(18,19)(20,21)(22,23),
    (0,2)(1,4)(3,5)(6,8)(7,10)(9,11)(12,14)(13,16)(15,17)(18,20)(19,22)(21,23),
    (0,23)(1,17)(2,21)(3,11)(4,15)(5,9)(6,22)(7,16)(8,19)(10,13)(12,20)(14,18)]
    sage: G.order()
    48
\end{lstlisting}
\normalsize 
The order of $G$ is 48; we must create $S4$ as a subgroup. Using our method \texttt{permutations\_to\_matrices}, which returns the matrix representation of elements of the restricted automorphism group, we see that the generator $(1,6)(3,12)(4,8)(5,14)(9,18)(11,20)(15,19)(17,22)$ does not correspond to the action of the symmetric group on $\Pi_4$:
\small
\begin{lstlisting}
    sage: conj_reps = G.conjugacy_classes_representatives()
    sage: Pi4.permutations_to_matrices(conj_class_reps=conj_reps, acting_group=G)
    {(1,6)(3,12)(4,8)(5,14)(9,18)(11,20)(15,19)(17,22): 
     [ 1/2  1/2  1/2 -1/2    0]
     [ 1/2  1/2 -1/2  1/2    0]
     [ 1/2 -1/2  1/2  1/2    0]
     [-1/2  1/2  1/2  1/2    0]
     [   0    0    0    0    1],
     ...
\end{lstlisting}
\normalsize
We create $S4$ as a subgroup of $G$ and compute the $\Hstar$-series using the method \texttt{Hstar\_function}:
\small
\begin{lstlisting}
    sage: S4 = G.subgroup(gens=[G.gens()[i] for i in range(1,4)])
    sage: S4.order()
    24
    sage: Pi4.Hstar_function(acting_group=S4, output='complete')
    {'Hstar': ((chi_0 + chi_2 + chi_4)*t^4 
     + (6*chi_0 + chi_1 + 6*chi_2 + 5*chi_3 + 9*chi_4)*t^3
     + (9*chi_0 + chi_1 + 8*chi_2 + 7*chi_3 + 14*chi_4)*t^2
     + (4*chi_0 + chi_1 + 3*chi_2 + 3*chi_3 + 5*chi_4)*t + chi_0)/(t + 1),
     'Hstar_as_lin_comb': ((t^4 + 6*t^3 + 9*t^2 + 4*t + 1)/(t + 1), 
     (t^3 + t^2 + t)/(t + 1), 
     t^3 + 5*t^2 + 3*t, (5*t^3 + 7*t^2 + 3*t)/(t + 1),
     (t^4 + 9*t^3 + 14*t^2 + 5*t)/(t + 1)),
     'conjugacy_class_reps': [(),
      (0,1)(2,3)(4,5)(6,7)(8,9)(10,11)(12,13)(14,15)(16,17)(18,19)(20,21)(22,23),
      (0,3,4)(1,5,2)(6,9,10)(7,11,8)(12,15,16)(13,17,14)(18,21,22)(19,23,20),
      (0,7)(1,6)(2,13)(3,12)(4,19)(5,18)(8,15)(9,14)(10,21)(11,20)(16,23)(17,22),
      (0,9,16,18)(1,11,22,12)(2,15,10,19)(3,17,20,6)(4,21,8,13)(5,23,14,7)],
     'character_table': [ 1  1  1  1  1]
     [ 1 -1  1  1 -1]
     [ 2  0 -1  2  0]
     [ 3 -1  0 -1  1]
     [ 3  1  0 -1 -1],
     'is_effective': False}
\end{lstlisting}
\normalsize
To reiterate, we have found, 
\begin{align*}
\Hstar(\Pi_4;z) =& \frac{(\chi_0 + \chi_2 + \chi_4)z^4  + (6\chi_0 + \chi_1 + 6\chi_2 + 5\chi_3 + 9\chi_4)z^3
 + (9\chi_0 + \chi_1 + 8\chi_2 + 7\chi_3 + 14\chi_4)z^2}{1+z} \\ 
 &+\frac{(4\chi_0 + \chi_1 + 3\chi_2 + 3\chi_3 + 5\chi_4)z + \chi_0}{1+z},
 \end{align*}
 where the characters are labeled according to the rows in the character table.
 
To see the documentation of the \texttt{Hstar\_function}, or of the related supporting methods, \texttt{fixed\_subpolytopes}, \texttt{permutations\_to\_matrices}, or indeed of any function in \texttt{Sagemath}, one can type \texttt{?} after the function:
\begin{lstlisting}
    sage: P = polytopes.cube(backend='normaliz')                                    
    sage: P.fixed_subpolytopes?
\end{lstlisting}
To see the both source code and the documentation simultaneously, type \texttt{??} after the function:
\begin{lstlisting}
    sage: P = polytopes.cube(backend='normaliz')                                    
    sage: P.Hstar_function??
\end{lstlisting}


\pagebreak
\bibliographystyle{amsalpha}
\bibliography{sources.bib}

\providecommand{\bysame}{\leavevmode\hbox to3em{\hrulefill}\thinspace}
\providecommand{\MR}{\relax\ifhmode\unskip\space\fi MR }
\providecommand{\MRhref}[2]{%
  \href{http://www.ams.org/mathscinet-getitem?mr=#1}{#2}
}
\providecommand{\href}[2]{#2}
\begin{thebibliography}{ASVM20}

\bibitem[AA17]{AguiarArdila}
M.~{Aguiar} and F.~{Ardila}, \emph{Hopf monoids and generalized permutahedra},
  arXiv e-prints (2017), arXiv:1709.07504.

\bibitem[ABM20]{ArdilaBeckMcWhirter}
Federico Ardila, Matthias Beck, and Jodi McWhirter, \emph{The arithmetic of
  coxeter permutahedra}, Revista de la Academia Colombiana de Ciencias Exactas,
  Físicas y Naturales \textbf{44} (2020), 1152--1166.

\bibitem[ACEP20]{ACEP}
Federico Ardila, Federico Castillo, Christopher Eur, and Alexander Postnikov,
  \emph{{Coxeter submodular functions and deformations of Coxeter
  permutahedra}}, Advances in Mathematics \textbf{365} (2020), 107039.

\bibitem[ASV19]{ArdilaSchindlerVindas}
Federico {Ardila}, Anna {Schindler}, and Andr{\'e}s~R. {Vindas-Mel{\'e}ndez},
  \emph{The equivariant volumes of the permutahedron}, Discrete and
  Computational Geometry (2019), 1--18.

\bibitem[ASVM20]{ASV}
Federico Ardila, Mariel Supina, and Andr{\'e}s Vindas-Mel{\'e}ndez, \emph{{The
  equivariant Ehrhart theory of the permutahedron}}, Proceedings of the
  American Mathematical Society \textbf{148} (2020), 5091--5107.

\bibitem[BJM16]{BeckJochemkoMcCullough}
Matthias {Beck}, Katharina {Jochemko}, and Emily {McCullough},
  \emph{{$h^\ast$-polynomials of zonotopes}}, arXiv e-prints (2016),
  arXiv:1609.08596.

\bibitem[BR15]{beck_robins}
Matthias Beck and Sinai Robins, \emph{{Computing the Continuous Discretely}},
  second ed., Undergraduate Texts in Mathematics, Springer, New York, 2015,
  Integer-point enumeration in polyhedra, With illustrations by David Austin.
  \MR{3410115}

\bibitem[BS18]{BeckSanyal}
Matthias Beck and Raman Sanyal, \emph{Combinatorial reciprocity theorems}, GSM,
  vol. 195, American Mathematical Society, Providence, RI, 2018, An invitation
  to enumerative geometric combinatorics. \MR{3839322}

\bibitem[CLS11]{cox2011toric}
David~A. Cox, John~B. Little, and Henry~K. Schenck, \emph{Toric varieties},
  Graduate Studies in Mathematics, American Mathematical Society, Providence,
  RI, 2011.

\bibitem[Ehr62]{Ehrhart1962}
Eug\`ene Ehrhart, \emph{Sur les poly\`edres rationnels homoth\'{e}tiques \`a
  {$n$}\ dimensions}, C. R. Acad. Sci. Paris \textbf{254} (1962), 616--618.

\bibitem[Eli22]{Elia2022}
Sophia Elia, \emph{On three {E}hrhart theories \& simplicial hyperplane
  arrangements}, Ph.D. thesis, Freie Universit\"{a}t Berlin, 2022.

\bibitem[FJS21]{ferroni2021ehrhart}
Luis Ferroni, Katharina Jochemko, and Benjamin Schröter, \emph{Ehrhart
  polynomials of rank two matroids}, preprint,
  \href{http://arxiv.org/abs/2106.08183}{\normalfont \ttfamily
  arXiv:2106.08183} (2021), 12 pp.

\bibitem[HNP21]{latticepolytopeslectures}
Christian Haase, Benjamin Nill, and Andreas Paffenholz, \emph{Lecture notes on
  lattice polytopes},
  \url{https://www2.mathematik.tu-darmstadt.de/~paffenholz/daten/preprints/20210628_Lattice_Polytopes.pdf},
  June 2021.

\bibitem[Kat05]{Katz}
Mordechai Katzman, \emph{{The Hilbert series of algebras of Veronese type}},
  Communications in Algebra \textbf{33} (2005), 1141--1146.

\bibitem[{Kha}14]{Khare}
Apoorva {Khare}, \emph{Standard parabolic subsets of highest weight modules},
  Transactions of the American Mathematical Society \textbf{369} (2014),
  2363--2394.

\bibitem[Kim20]{Kim}
Donghyun Kim, \emph{{A combinatorial formula for the Ehrhart $\hstar$-vector of
  the hypersimplex}}, J. Comb. Theory Ser. A \textbf{173} (2020), no.~C,
  105213.

\bibitem[KR12]{KhareRidenour}
Apoorva {Khare} and Tim {Ridenour}, \emph{{Faces of weight polytopes and a
  generalization of a theorem of Vinberg}}, Algebras and Representation Theory
  \textbf{15} (2012), 593--611.

\bibitem[Pos09]{Postnikov}
Alexander Postnikov, \emph{Permutohedra, associahedra, and beyond}, Int. Math.
  Res. Not. IMRN (2009), no.~6, 1026--1106. \MR{2487491 (2010g:05399)}

\bibitem[Sag01]{Sagan}
Bruce~E. Sagan, \emph{The symmetric group}, second ed., Graduate Texts in
  Mathematics, vol. 203, Springer-Verlag, New York, 2001, Representations,
  combinatorial algorithms, and symmetric functions. \MR{1824028}

\bibitem[Ser77]{Serre}
Jean-Pierre Serre, \emph{Linear representations of finite groups},
  Springer-Verlag, New York-Heidelberg, 1977, Translated from the second French
  edition by Leonard L. Scott, Graduate Texts in Mathematics, Vol. 42.
  \MR{0450380}

\bibitem[She74]{Shephard}
Geoffrey~C. Shephard, \emph{Combinatorial properties of associated zonotopes},
  Canad. J. Math. \textbf{26} (1974), 302--321.

\bibitem[SS13]{SSemail}
Francisco Santos and Alan Stapledon, Personal email correspondence, December
  2013.

\bibitem[Sta80]{stanley1980decompositions}
Richard~P. Stanley, \emph{Decompositions of rational convex polytopes}, Ann.
  Discrete Math. \textbf{6} (1980), 333--342. \MR{593545}

\bibitem[Sta91]{Stanleyzonotope}
\bysame, \emph{A zonotope associated with graphical degree sequences}, Applied
  geometry and discrete mathematics, DIMACS Ser. Discrete Math. Theoret.
  Comput. Sci., vol.~4, Amer. Math. Soc., Providence, RI, 1991, pp.~555--570.
  \MR{1116376 (92k:52020)}

\bibitem[Sta96]{stanley_green}
\bysame, \emph{Combinatorics and commutative algebra}, second ed., Progress in
  Mathematics, vol.~41, Birkh\"{a}user Boston, Inc., Boston, MA, 1996.
  \MR{1453579}

\bibitem[Sta07]{Stanley_hyperplane}
Richard Stanley, \emph{An introduction to hyperplane arrangements},
  pp.~389--496, American Mathematical Society, 10 2007.

\bibitem[Sta11]{Stapledon}
Alan Stapledon, \emph{Equivariant {E}hrhart theory}, Advances in Mathematics
  \textbf{226} (2011), no.~4, 3622--3654.

\bibitem[{Sup}20]{Supina}
Mariel {Supina}, \emph{{The Hopf monoid of orbit polytopes}}, Journal of
  Combinatorics \textbf{11} (2020), no.~4, 575--601.

\bibitem[Zie95]{Ziegler}
G\"{u}nter~M. Ziegler, \emph{{Lectures on Polytopes}}, Graduate Texts in
  Mathematics, vol. 152, Springer-Verlag, New York, 1995. \MR{1311028}

\end{thebibliography}

\end{document}